\title{Selective separability properties of Fr\'echet-Urysohn spaces and their products}
\author[S. Bardyla]{Serhii Bardyla}
\address{Universit\"at Wien, Institut f\"ur Mathematik, Kurt G\"odel Research Center, Kolingasse 14-16, 1090 Vienna, Austria}
\email{sbardyla@gmail.com}
\urladdr{http://www.logic.univie.ac.at/~bardylas55/}
\author[F. Maesano]{Fortunato Maesano}
\address{MIFT - Matematica e Informatica, Scienze Fisiche e Scienze della Terra Department, University of Messina, Viale F. Stagno d'Alcontres 31, 98166 Messina, Italy.}
\email{fomaesano@unime.it}
\author[L. Zdomskyy]{Lyubomyr Zdomskyy}
\address{Institut f\"ur Diskrete Mathematik und Geometrie, Technische Universit\"at Wien, Wiedner
Hauptstrasse 8-10/104, 1040 Wien, Austria.}
\email{lzdomsky@gmail.com}
\urladdr{https://dmg.tuwien.ac.at/zdomskyy/}
\thanks{The first author was supported by the Austrian Science Fund FWF (Grant ESP 399).
The second author would like to thank the ``National Group for Algebraic and Geometric Structures, and their Applications'' (GNSAGA – INdAM) for generous support for this research. The third author would like to thank  the Austrian Science Fund FWF (Grants I 3709 and I 5930) for generous support for this research.}
\date{}
\subjclass[2020]{Primary:  	03E50, 54D65, 03E35; Secondary: 54D10,
 	03E17, 03E65.}
\keywords{Frech\'{e}t-Urysohn space, Martin's Axiom, separated mad family, $M$-separability, $H$-separability, $R$-separability,  $ \alpha_{i} $-space.}
\newtheorem{theorem}{Theorem}[section]
\newtheorem{question}[theorem]{Question}
\newtheorem{lemma}[theorem]{Lemma}
\newtheorem{proposition}[theorem]{Proposition}
\newtheorem{definition}[theorem]{Definition}
\newtheorem{claim}{Claim}[theorem]
\def\@makechapterhead#1{%
  \vspace*{50\p@}%
  {\parindent \z@ \raggedright \normalfont
    \ifnum \c@secnumdepth >\m@ne
      \if@mainmatter
        \Huge\bfseries \thechapter.\space%
      \fi
    \fi
    \interlinepenalty\@M
    \Huge \bfseries #1\par\nobreak
    \vskip 40\p@
  }}
\newcommand{\bb}{\mathfrak b}
\newcommand\extt[2]{\ensuremath{#1^{\,\smallfrown}\mkern-2mu #2}}
\newcommand{\forces}{\Vdash}
\newcommand{\la}{\langle}
\newcommand{\name}[1]{\dot{#1}}
\newcommand{\ra}{\rangle}
\newcommand{\hot}{\mathfrak}
\newcommand{\zrost}{\w^{\uparrow\w}}
\newcommand{\w}{\omega}
\newcommand{\A}{\mathcal{A}}
\newcommand{\C}{\mathcal{C}}
\newcommand{\F}{\mathcal{F}}
\newcommand{\G}{\mathcal{G}}
\newcommand{\I}{\mathcal{I}}
\newcommand{\IP}{\mathbb P}
\newcommand{\IQ}{\mathbb Q}
\newcommand{\PP}{\mathcal P}
\newcommand{\U}{\mathcal U}
\newcommand{\cU}{\mathcal U}
\newcommand{\Y}{\mathcal Y}
\newcommand{\Z}{\mathcal Z}
\newcommand{\uhr}{\upharpoonright}
\begin{document}

\begin{abstract}
In this paper we study the behaviour of selective separability properties in the class of Frech\'{e}t-Urysohn spaces. We present two examples, the first one given in ZFC proves the existence of a countable Frech\'{e}t-Urysohn (hence $R$-separable and selectively separable) space which is not $H$-separable; assuming $\mathfrak{p}=\mathfrak{c}$, we construct such an example which is also zero-dimensional and $\alpha_{4}$. Also,  motivated by a result of Barman and Dow stating that
the product of two countable Frech\'{e}t-Urysohn  spaces is $M$-separable under PFA,  we show that the MA is not sufficient here.
In the last section we prove that in the Laver model, the product of any two $H$-separable spaces is $mH$-separable.
\end{abstract}

\maketitle

\section{Introduction}

A space $X$ is said to be \emph{Frech\'{e}t-Urysohn}, briefly FU, if for every $A \subset X$ and $x \in \overline{A}\setminus A$ there exists a  sequence $S \subset A\setminus\{x\}$ converging to $x$, i.e., $S\in [A\setminus\{x\}]^\w$ and $|U\setminus A|<\w$ for every open neighbourhood $U$ of $x$.
It has been known since 50 years ago that the product of two FU spaces
does not have to be a FU space, neither for  countable nor for compact spaces,
see \cite{Costantini99, Nogura85, Ols74, Simon08} and references therein.
However, it is still interesting which properties
products of two FU spaces must (at least consistently) possess.
In this paper we study various combinatorial versions of the separability of such products. Our motivation comes from the following result of
Barman and Dow:
\begin{theorem}\cite[3.3]{BarDow12} \label{pfa_th}
	(PFA) The product of finitely many countable FU spaces is M-separable.
\end{theorem}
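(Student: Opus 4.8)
The plan is to reduce to a product $X\times Y$ of two countable FU spaces — the case of $n$ factors being the same argument with heavier notation — and then to split a purely topological reformulation off the set-theoretic core. The reformulation I would use involves neither FU nor countability: for $A\subseteq X\times Y$, the set $A$ is dense if and only if for every nonempty open $V\subseteq Y$ the set $\pi_X\big(A\cap(X\times V)\big)$ is dense in $X$; one direction is immediate, and conversely a basic box $U\times V$ meets $A$ as soon as $\pi_X(A\cap(X\times V))$ meets $U$. Thus, given dense $\langle D_n:n\in\omega\rangle$ in $X\times Y$, suitable finite $F_n\subseteq D_n$ exist iff they can be chosen so that $\bigcup_n\pi_X\big(F_n\cap(X\times V)\big)$ is dense in $X$ for every nonempty open $V\subseteq Y$; and by monotonicity it is enough to let $V$ range over a fixed $\pi$-base of $Y$.

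\textbf{The easy case.} If one factor, say $Y$ (using the evident symmetry of the reformulation), has countable $\pi$-weight, fix a countable $\pi$-base $\{V_j:j\in\omega\}$ and a partition $\omega=\bigsqcup_jP_j$ into infinite sets; for $n\in P_j$ the set $\pi_X\big(D_n\cap(X\times V_j)\big)$ is dense in $X$, so by $M$-separability of the countable FU space $X$ (countable FU spaces are in fact $R$-separable) one may pick finite $F_n\subseteq D_n\cap(X\times V_j)$ with $\bigcup_{n\in P_j}\pi_X(F_n)$ dense in $X$, and these $F_n$ work — with no appeal to PFA. So the whole difficulty is confined to the case in which \emph{both} $X$ and $Y$ have uncountable $\pi$-weight, equivalently each has a point of uncountable $\pi$-character; at such a point the neighbourhood filter has no countable base but, by the FU property, is generated by convergent sequences, and it is precisely the interaction of two such filters that, as this paper shows later, $\mathrm{MA}$ does not suffice to tame.

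\textbf{The hard case.} Here a $\pi$-base of $Y$ may have size $\mathfrak c$, and the requirements ``$\bigcup_n\pi_X(F_n\cap(X\times V))$ is dense in $X$'' have to be met with the help of PFA rather than elementary bookkeeping. I would set up a proper forcing $\mathbb P$ whose conditions are finite initial selections $\langle F_0,\dots,F_m\rangle$ with each $F_i\subseteq D_i$ finite, ordered by end-extension so that each $F_i$ is frozen once chosen (guaranteeing the limit pieces are finite), enriched by finitely many countable elementary submodels of a large $H(\theta)$ as side conditions. For an open box $U\times V$ the set of conditions that have already put a point of some $D_i$ into $U\times V$ is dense; bundling these requirements through the side conditions, a single $(N,\mathbb P)$-generic step handles all of them coded in $N$, so that a filter generic for the $\aleph_1$ dense sets ``be $(N_\alpha,\mathbb P)$-generic'' ($\alpha<\omega_1$, with $\langle N_\alpha\rangle$ a continuous $\in$-chain) yields, via the reformulation, a dense $\bigcup_nF_n$. (If $Y$ has $\pi$-weight $\aleph_2$ an extra layer of care is needed to see that $\aleph_1$ dense sets still suffice; one route is an $\mathrm{OCA}$-style dichotomy, which cannot produce an uncountable ``bad'' subconfiguration because a countable FU space admits none.)

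\textbf{The main obstacle.} Everything comes down to the set-theoretic heart of the hard case: verifying that $\mathbb P$ is proper — that below an arbitrary countable elementary submodel $N$ every condition extends to an $(N,\mathbb P)$-generic one — and that the $\aleph_1$-many generic requirements genuinely force density. This is exactly where the FU property of \emph{both} factors and the countability of $X$ and $Y$ enter: given a countable model $N$ and a condition $p$, one must amalgamate the demands of $N$ into a suitable extension of $p$, and it is FU that supplies the convergent sequences inside the relevant dense sets making this amalgamation possible. It is also the precise point at which PFA is used in an essential way, beyond $\mathfrak p=\mathfrak c$: as the paper shows later, $\mathrm{MA}$ alone leaves countable FU spaces whose product is not $M$-separable, so a genuinely PFA-level tool — properness with side conditions, or $\mathrm{OCA}$, or the $P$-ideal dichotomy — is unavoidable. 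I would expect essentially all of the real work, and all of the dependence on PFA, to be concentrated in this verification.
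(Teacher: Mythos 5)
This statement is not proved in the paper at all: it is quoted from Barman and Dow \cite[3.3]{BarDow12} as motivation, so there is no in-paper argument to compare your route against. Judged on its own terms, your proposal contains a genuine gap. The reformulation of density via projections and the ``easy case'' (one factor of countable $\pi$-weight) are fine, but that case is essentially vacuous for the theorem's content; the entire difficulty lives in the ``hard case,'' and there you do not give a proof --- you give a plan whose two load-bearing claims are asserted rather than established. Concretely: (i) you never specify the poset $\mathbb P$ precisely enough to verify properness (what, beyond membership of models in a chain, do the side conditions constrain, and how does a condition's working part interact with them?), and you explicitly defer the amalgamation argument in which the FU property of both factors is supposed to enter; (ii) you never explain why meeting only $\aleph_1$ dense sets suffices. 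Point (ii) is not a technicality: the naive poset of finite end-extensions is countable, hence Cohen, and the natural density requirements are indexed by the open boxes of $X\times Y$, of which there may be $\mathfrak c=\aleph_2$ many under PFA; most of them lie in none of the $\aleph_1$ submodels $N_\alpha$, so ``bundling through side conditions'' has to be backed by an actual argument (presumably using convergent sequences to reduce uncountably many open sets to countably many requirements), and your parenthetical appeal to an ``OCA-style dichotomy'' is not such an argument.

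The seriousness of the gap is underlined by the very paper you are reading: Theorem~\ref{th:main} shows that under MA$+\neg$CH the conclusion fails, so any correct proof must use PFA beyond $\mathfrak p=\mathfrak c$ in an essential and delicate way. The step you leave unverified --- properness of $\mathbb P$ together with the sufficiency of $\aleph_1$ dense sets --- is exactly the step that must fail for MA-style arguments and succeed under PFA, i.e.\ it is the theorem. A proposal that isolates this step and then says ``I would expect essentially all of the real work \dots to be concentrated in this verification'' has correctly located the problem but has not solved it.
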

Recall from \cite{Scheep99} that a topological space $X$ is said to  be  \emph{selectively separable} (or \emph{M-separable}) if for every sequence $\la D_{n} : n \in \omega \ra$ of dense subsets of $X$, there are finite sets $F_{n} \subset D_{n}$, $n \in \omega$, such that $\bigcup \{F_{n}: n \in \omega\}$ is dense in $X$.

In contrast with Theorem~\ref{pfa_th}, Barman and Dow  proved  in \cite{BarDow11} that every separable FU space is $M$-separable and that under CH, there are two
countable FU spaces whose product is not $M$-separable.
In \cite[6.1 and 6.2]{MilTsaZdo16}
this result has been improved by showing that under CH there are two countable FU $H$-separable topological groups whose
product is not $M$-separable. The main result of this paper
is the following
\begin{theorem} \label{th:main}
It is consistent with (MA+$\neg\mathit{CH}$) that there are
two countable regular FU spaces whose product is not $M$-separable.
\end{theorem}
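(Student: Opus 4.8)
The plan is to obtain $X$ and $Y$ as the two countable spaces read off from a single, carefully built mad family, and to arrange by a bespoke ccc iteration that $MA+\neg CH$ holds and yet this mad family still does its job. Over a ground model of GCH I would run a finite support ccc iteration $\la\IP_\alpha,\name\IQ_\alpha:\alpha<\w_2\ra$, with a bookkeeping function splitting the coordinates into two interleaved tasks. Along the first task, $\name\IQ_\alpha$ is a $\IP_\alpha$-name for a ccc poset of size $\le\aleph_1$ supplied by the bookkeeping, so that the Solovay--Tennenbaum argument forces $MA_{\aleph_1}$; since the iteration has length $\w_2$ this gives $MA+\neg CH$ with $\mathfrak c=\aleph_2$. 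Along the second task the posets add, in $\w_2$ steps, an increasing chain of countable almost disjoint families whose union is a mad family $\A=\{A_\xi:\xi<\w_2\}$ on $\w$, from which one defines two countable zero-dimensional topologies $\tau_X,\tau_Y$ on $\w$ --- so $X$ and $Y$ are countable and regular by construction --- and these topologies keep growing as the iteration proceeds.

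The engine of the proof is an inductive invariant on the approximations: call an almost disjoint family \emph{separated} if it satisfies a statement of $\forall\exists$ shape asserting that every countable configuration of a prescribed kind is ``caught'' by some member of the family in a prescribed way. Separatedness is meant to do two things. First, separatedness of the completed family $\A$ must imply that $X$ and $Y$ are Fr\'echet--Urysohn: given a point $z$ and a set $D$ with $z\in\overline D$, feeding the configuration built from $D$ and from the members of $\A$ coding the neighbourhoods of $z$ into the catching clause yields a sequence in $D$ converging to $z$. Second, and this is the technical core, separatedness must be preserved by the whole iteration. Maintaining it at second-task stages is a routine genericity argument; preserving it at first-task stages --- showing that interleaving an arbitrary small ccc poset cannot create a counterexample to any instance of the $\forall\exists$ statement --- is, I expect, the main obstacle, since $MA$ is precisely the kind of axiom that tends to kill non-Fr\'echet and non-selective behaviour. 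The way around this is to design the second-task posets so that every configuration that could ever threaten separatedness is decided by an initial segment of the iteration, after which no later ccc forcing can spoil it; note that a first-task stage may well force an $M$-selection for some sequence of sets dense in the \emph{current} topology, but this is harmless because later stages enlarge $\tau_X$ and $\tau_Y$ and such a selection then fails to be dense.

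It then remains to check, in $V[\IG]$, that $X\times Y$ is not $M$-separable. I would fix a sequence $\la D_n:n\in\w\ra$ of dense subsets of $X\times Y$ defined from the coideal $\I(\A)^+$ of $\A$, sitting inside the (non-dense) set of isolated pairs and spread out relative to $\A$, and show that for arbitrary finite $F_n\subseteq D_n$ the union $E=\bigcup_{n\in\w}F_n$ is not dense: otherwise $\overline E$ would meet a basic clopen neighbourhood of some point of $X\times Y$ arising from a member $A_\xi\in\A$, and then finiteness of the $F_n$ together with the ``catching'' clause of separatedness would force $E$ to be almost disjoint from $A_\xi$ inside $\w$, whence $E$ misses a smaller clopen neighbourhood of that point --- a contradiction. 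This does not clash with $MA+\neg CH$: the natural poset of finite selections for $\la D_n\ra$ is $\sigma$-centered (hence ccc), but to make its generic union dense one must meet one dense set for each nonempty basic open subset of $X\times Y$, and the construction arranges that $X$, and hence $X\times Y$, has a point of $\pi$-character $\mathfrak c=\aleph_2$ (this is where the size of $\A$ is used), so $MA_{\aleph_1}$ has too few dense sets at its disposal. The residual verifications --- that $\A$ is genuinely mad in $V[\IG]$, that $X$ and $Y$ are regular, and that $MA$ holds --- are built into the design of the iteration.
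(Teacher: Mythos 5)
Your overall architecture (a finite-support ccc iteration of length $\omega_2$ interleaving MA-posets with posets building a mad family, then two countable zero-dimensional refinements of the rational topology on $\omega$) matches the paper's in outline, and you correctly identify the crux: the mad family's combinatorial property must survive the arbitrary ccc stages. But the property you posit is the wrong one, and the way you propose to preserve it does not work. Your ``separated'' is a $\forall\exists$ \emph{catching} statement (every countable configuration is caught by some member of $\mathcal{A}$), i.e.\ essentially tightness; the paper explicitly observes that no mad family on $\mathbb{Q}$ consisting of convergent sequences and closed discrete sets can be tight, so a catching clause strong enough to yield Fr\'echet--Urysohn-ness directly is unavailable. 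What the paper actually uses is a \emph{separation} property of the opposite logical shape: any two disjoint subfamilies of size $<\mathfrak{c}$ can be split by a single set $S$ with $A'\subset^* S$ and $A''\subset^*\omega\setminus S$. Preservation of this through the MA stages is not achieved by ``deciding threatening configurations early'' (new countable configurations appear cofinally often, and an arbitrary ccc poset can destroy an instance of your $\forall\exists$ statement that was true at an earlier stage); it is achieved by Dow--Shelah's device of interleaving specializing posets $\IQ_{\mathcal{A},<}$ so that every $\omega_1$-sized subfamily is special, whence $\mathit{MA}_{\omega_1}$ itself (via a Silver-style argument) \emph{implies} separation. You have no analogue of this step, and it is the main obstacle you flag without resolving.

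The second gap is in the derivation of the two spaces and of non-$M$-separability. Fr\'echet--Urysohn-ness does not fall out of any property of the mad family: in the paper it is obtained by a separate recursion of length $\mathfrak{c}$ carried out \emph{in the final model}, where $\mathfrak{p}=\mathfrak{c}$ is used to extract a convergent sequence from each pair $(S_\delta,x_\delta)$, and the real work is keeping all previously declared sequences convergent each time a new open set is adjoined. Your non-$M$-separability argument is not a proof at all: observing that the finite-selection poset needs $\mathfrak{c}=\aleph_2$ many dense sets only shows that $\mathit{MA}_{\omega_1}$ fails to \emph{prove} $M$-separability; it does not produce a sequence of dense sets with no dense selection. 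The paper instead fixes dense sets $E_n$ whose union is the graph of a permutation $E$ of $\omega$, enumerates all candidate selections $\la F^\beta_n\ra\in\prod_n[E_n]^{<\omega}$, and for each one constructs an open box $U_\beta\times V_\beta$ missing $\bigcup_n F^\beta_n$; building that box without destroying any convergent sequence in either factor is exactly where the separated mad family (splitting the $\mathcal{A}$-envelopes of the $\tau$-convergent sequences from those of the $\sigma$-convergent sequences, so that $E(C_n,1)\cap D_m=\emptyset$) is used. Your sketch of ``$E$ almost disjoint from $A_\xi$ hence missing a clopen neighbourhood'' conflates subsets of $\omega$ with subsets of $\omega\times\omega$ and supplies no mechanism for this step.
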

Among other things Theorem~\ref{th:main} shows that Theorem~\ref{pfa_th}
cannot be proved using only the equality between  some standard
cardinal characteristics of the continuum, as all these characteristics are also equal to $\mathfrak c$ under MA.
The proof of Theorem~\ref{th:main} is based on the application
of certain kind of separated mad families, whose existence
is consistent with MA by essentially the same argument as in
\cite{DowShe12}, see Section~\ref{sec4}.

The notion of $M$-separability motivated
Bella, Bonanzinga and Matveev to introduce in \cite{BELLA20091241}
further  variations thereof of this property. In particular,  a topological space $X$ is said to  be \emph{H-separable} (resp., \emph{R-separable}), if for every sequence $\la D_{n} : n \in \omega \ra$ of dense subsets of $X$, there are finite sets $F_{n} \subset D_{n}$  (resp., points $x_{n} \in D_{n}$), $n \in \omega$, such that every nonempty open set in $X$ meets all but finitely many $F_{n}$ (resp.,  $\{x_{n}: n \in \omega\}$ is dense in $X$).

Gruenhage and Sakai in \cite{GRUENHAGE20111352} proved that separable FU  spaces are $R$-separable (this  improves the result of Barman and Dow asserting that separable FU spaces are $M$-separable). In Section~\ref{sec2} we prove that the $H$-separability behaves  differently, i.e., in ZFC  there exists a  countable Hausdorff FU space which is not $H$-separable, see Theorem~\ref{ma_weaker_pfa}. This gives new examples of a space which is $R$-separable but not $H$-separable, see \cite{BELLA20091241} and \cite{Bella2008SelectiveSG} for more examples of this kind.

In \cite{GRUENHAGE20111352}, using different terminology, the following weaker versions of combinatorial density properties were introduced:  A topological space $X$ is said to be
\emph{$mP$-separable,} where $P\in\{M,H,R\}$, if
it satisfies the conclusion of the $P$-separability for any decreasing sequence of dense subsets of $X$. For instance,
$X$ is \emph{mR-separable}  if for every decreasing sequence $\la D_{n} : n \in \omega \ra$ of dense subsets of $X$, there are points $x_{n} \in D_{n}$ such that $\{x_{n}: n \in \omega\}$ is dense in $X$.
The following diagram sums up the implications between the cited selective properties.
\[
\begin{tikzcd}
	R\text{-}sep. \arrow[r] \arrow[d]
	&M\text{-}sep. \arrow[d]
	&\arrow[l]  H\text{-}sep.  \arrow[d]\\
	mR\text{-}sep. \arrow[r]
	&\arrow[u] mM\text{-}sep.
	&\arrow[l]  mH\text{-}sep.
\end{tikzcd}
\]
It is proved in \cite{GRUENHAGE20111352} that the $M$-separability and $mM$-separability are in fact equivalent, which motivated the authors to ask in \cite[Question 2.10(1)]{GRUENHAGE20111352} \emph{whether
there exists an $mH$-separable space which is not $H$-separable.}
Since \cite[Lemma 2.7(2)]{GRUENHAGE20111352} combined with \cite[Corollary 4.2]{GRUENHAGE20111352} implies that every countable FU space is $mH$-separable, the FU non-$H$-separable space  that we build in the proof of Theorem~\ref{ma_weaker_pfa} is  $mH$-separable,
and thus our Theorem~\ref{ma_weaker_pfa} answers the aforementioned question posed in \cite{GRUENHAGE20111352} in the affirmative.
We do not know how to get a regular (equivalently, zero-dimensional)
example outright in ZFC.

\begin{question}
\begin{enumerate}
\item Is there a ZFC example of a regular countable FU (resp. $mH$-separable) space which is not $H$-separable?
\item Is there a (ZFC example of) a [regular] countable $mR$-separable space
which is not $R$-separable?
\end{enumerate}
\end{question}

Given a space $X$ and $x \in X$, we denote by $\Gamma_{x}$  the set of all $A \in [X\setminus\{x\}]^{\omega}$ which converge to $x$. In \cite{Arh72}, Arhangel'skiĭ introduced the following local  properties
of a space $X$ at  some point $x\in X$:
\begin{itemize}
	\item[$(\alpha_{1})$] For each    $\la S_{n} : n \in \omega \ra \in \Gamma^\w_{x}$, there is  $S \in \Gamma_{x}$ such that $S_{n}\subset^* S$  for all $n \in \omega$;
	\item[$(\alpha_{2})$]  For each    $\la S_{n} : n \in \omega \ra \in \Gamma^\w_{x}$, there is  $S \in \Gamma_{x}$ such that $S_{n} \cap S$ is infinite for all $n \in \omega$;
	\item[$(\alpha_{3})$]  For each    $\la S_{n} : n \in \omega \ra \in \Gamma^\w_{x}$, there is  $S \in \Gamma_{x}$ such that $S_{n} \cap S$ is infinite for infinitely many $n \in \omega$;
\item[$(\alpha_{4})$]  For each    $\la S_{n} : n \in \omega \ra \in \Gamma^\w_{x}$, there is  $S \in \Gamma_{x}$ such that $S_{n} \cap S\neq\emptyset$ for infinitely many $n \in \omega$.
\end{itemize}
A space $X$ is an $\alpha_i$ space, where $i\in\{1,2,3,4\}$,
if it is an $\alpha_i$ space at each $x\in X.$
Each of these $\alpha_{i}$-properties obviously  imply the next one.
The idea behind the introduction of $\alpha_i$ spaces was  to find additional properties which together with the FU property guarantee that
the product of two spaces is again FU, see \cite{Arh72,Nogura85}
for more details.

In Section~\ref{sec3} we  prove that every separable $\alpha_{2}$  FU space is $H$-separable,  and $\mathfrak{p}=\mathfrak{c}$ yields a countable regular FU $\alpha_{4}$ which is not  $H$-separable.
This leaves the following question open, which actually consists of several
subquestions.
\begin{question}
Is it consistent that every (regular) $\alpha_4$ [resp. $\alpha_3$]
FU space is $H$-separable?
\end{question}

In Section~\ref{sec5}  we show that
it is consistent that the product of two countable $H$-separable spaces
is $mH$-separable. More precisely, this is the case in the classical Laver model introduced in \cite{Lav76}.
This improved an earlier result from   \cite{RepZd18}
stating that in the same model,  the product of any two countable $H$-separable spaces is $M$-separable.
Consequently, in the Laver model the product of any two $H$-separable spaces is $mH$-separable provided that it is hereditarily separable.
This result motivates the following
\begin{question}
\begin{enumerate}
\item Is there a countable (regular) $mH$-separable space in the Laver model
which is not $H$-separable?
\item Is the class of $H$-separable ($mH$-separable) spaces
consistently closed under finite products? What about the Laver model?
\item Is the product of 3 (resp. finitely many)
$H$-separable spaces $mH$-separable in the Laver model?
\end{enumerate}
\end{question}

Finally, the proof of Theorem~\ref{pfa_th} gives actually the $mR$-separability of the product, but we do not know whether it can be
pushed further.
\begin{question}
Does PFA imply that the product of two countable
FU spaces is $mH$- (resp. $R$-) separable?
\end{question}

All the undefined notions can be found in
\cite{Bla10},
\cite{Engelking}  and \cite{Kunen}.

\section{A Hausdorff Fr\'echet-Urysohn space which is not $H$-separable in ZFC} \label{sec2}

In the proof of the main result of this section we
 shall use a fundamental result of Mathias stating that there
is no analytic  mad family $\A\subset[\w]^\w$, see \cite[Corollary 4.7]{Mat77}. Recall that an infinite $\A\subset[\w]^\w$ is \emph{mad},
if any two different elements of $\A$ have finite intersection,
and $\A$ is not included into to a strictly bigger family with this property.

\begin{theorem} \label{zfc}
There exists a countable Hausdorff Frechet-Urysohn
 space $X$
 without isolated points which is not $H$-separable.
\end{theorem}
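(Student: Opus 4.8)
The plan is to present $X$ as a concrete, \emph{Borel-coded} topology on a fixed countable set, and then to argue that $H$-separability of such a space would produce an analytic maximal almost disjoint family, which is impossible by Mathias's theorem. For the space itself I would topologise the tree $\w^{<\w}$ (or $2^{<\w}$) using a single \emph{closed} almost disjoint family as scaffolding. The point is that if one regards the immediate successors of a node $s$ as indexing a copy of $2^{<\w}$, then the family of all branches is a closed AD family living there, and the ideal $\I_s$ it generates (together with the finite sets) is \emph{not} tall in a usable way: a subset of that copy is $\I_s$-small iff it is a finite union of chains, so every $\I_s$-positive set contains an infinite antichain, hence an infinite set almost disjoint from every branch. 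Declaring $U\subseteq\w^{<\w}$ to be open iff for each $s\in U$ the set of descendants of $s$ that $U$ omits is $\I_s$-small --- made into a genuine topology by iterating the requirement down the tree --- yields a $T_1$ space in which every node is non-isolated and whose neighbourhood trace at a node is exactly a ``branch-ideal'' space. Using an AD family here rather than a mad one is deliberate: it is what keeps the entire topology Borel.

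Granting this, the cheap properties come first: $X$ is countable by fiat, and it has no isolated points since $\I_s$ contains no co-small set, so every neighbourhood of $s$ retains infinitely many descendants. Hausdorffness is already delicate --- in naive tree topologies comparable nodes refuse to separate --- so I would take the basic neighbourhoods of $s$ to be \emph{thin} (concentrated along antichain-like sets of descendants rather than fat cones) and verify directly that distinct nodes receive disjoint thin neighbourhoods. The Fr\'echet--Urysohn property is the second delicate point and, together with the simultaneous demand that $X$ fail $H$-separability, I expect it to be the main obstacle: given $B$ with $s\in\overline B$, one must thread a convergent sequence out of $B$, which calls for a fusion down the tree using the antichain-extraction property of each $\I_t$ along the branch, and the recursion in the definition of ``open'' must be tuned so that positivity at $s$ truly funnels down to an honest sequence inside $B$. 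The tension is structural: a topology rich enough to be FU is usually rich enough to be $H$-separable, so the scaffolding must be chosen to thread precisely this needle.

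Finally, suppose $X$ were $H$-separable, and apply the hypothesis to the decreasing sequence of dense sets $D_n=\{s\in\w^{<\w}:|s|\ge n\}$, obtaining finite $F_n\subseteq D_n$ such that every nonempty open set meets all but finitely many $F_n$. The sequence $\la F_n:n\in\w\ra$ is a single real; from it and the fixed Borel code of the topology I would read off an almost disjoint family $\A$ --- at each node $s$ one keeps the branch-AD-family of the copy at $s$ and adjoins one further infinite set recording where the $F_n$'s living in that copy concentrate --- and, after transporting everything onto one copy of $\w$, the family $\A$ is definable arithmetically from the parameters, hence analytic. The witnessing property now forces $\A$ to be maximal: were some infinite $C$ almost disjoint from every member of $\A$, then, by the FU property and the way the neighbourhood filters are built, $C$ would determine a nonempty open subset of $X$ --- a thin neighbourhood aimed along $C$ --- that misses $F_n$ for infinitely many $n$, contradicting the choice of the $F_n$. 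Hence $\A$ is an analytic mad family, contradicting \cite[Corollary 4.7]{Mat77}. Beyond the FU/Hausdorff balancing act, the subtle part here is exactly to make ``non-maximality of $\A$'' literally produce such an open set, i.e.\ to build the neighbourhood filters sensitive enough to register every almost disjoint perturbation --- the other half of the same needle.
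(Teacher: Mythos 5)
Your proposal is a plan rather than a proof: the two points you yourself flag as ``the main obstacle'' and ``the subtle part'' are exactly where the argument has to be made, and neither is carried out. Concretely, (i) you never exhibit a topology on $\w^{<\w}$ that is simultaneously Hausdorff, Fr\'echet--Urysohn, crowded, and Borel-coded --- the FU verification is deferred to an unspecified fusion down the tree --- and (ii) the reduction ``$H$-separable $\Rightarrow$ there is an analytic mad family'' is not established. For (ii) the missing step is essential: given the $H$-separability witness $\la F_n:n\in\w\ra$ and an infinite $C$ almost disjoint from every member of your family $\A$, you need a nonempty \emph{open} set missing infinitely many of the $F_n$. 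Sets of the form $\w^{<\w}\setminus\bigcup_{n\in C'}F_n$ have no reason to be open in a topology fixed in advance; making the neighbourhood filters ``sensitive to every almost disjoint perturbation'' amounts to building a maximal object into the topology, which is in direct tension with keeping the topology Borel. Nothing in the proposal resolves this tension.

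The paper threads the needle quite differently: it does not produce a Borel space at all. It starts from the rationals on $\w$ and runs a transfinite recursion of length $\mathfrak{c}$, at each stage either adding a convergent sequence (to force FU, diagonalizing over all pairs $\la S,x\ra$) or declaring open a set of the form $\w\setminus\bigcup_{n\in A_\beta}F^\beta_n$ (to kill one candidate $H$-separability witness), where $A_\beta$ ranges over a \emph{compact} almost disjoint family $\A$ of size $\mathfrak{c}$ extended to a mad family $\C$. Mathias's theorem is used \emph{inside} the recursion, not as the final contradiction: at a stage where a new convergent sequence meets infinitely many of the $E_n$'s, the trace $\{A\cap L : A\in\A,\ |A\cap L|=\w\}$ is analytic, hence not mad on $L$, which yields $L'\in[L]^\w$ almost disjoint from all of $\A$ and hence a colour $C(Y)\in\C\setminus\A$ for the new sequence; this keeps the colours of convergent sequences disjoint from the colours $A_\beta$ used for the killed witnesses, so the new open sets never destroy convergence. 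Your idea of invoking Mathias via ``$H$-separability would code an analytic mad family'' is genuinely different in spirit, but as written it does not close, and the gap is precisely the step you defer.
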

\begin{proof}
 The underlying set of $X$ will be $\w$, and let
$\tau_0$ be a topology on $\w$ such that $\la \w,\tau_0\ra$ is
homeomorphic to the rationals.

The final topology $\tau=\tau_{\hot c}$ turning $\w$ into a space
with the needed properties  will be constructed recursively over
ordinals $\alpha\in\hot c$ as an increasing union $\tau_{\hot
c}=\bigcup_{\alpha<\hot c}\tau_\alpha$ of intermediate topologies.


 Let $\la E_n:n\in\w\ra$ be a sequence of
mutually disjoint dense subsets of $\la \w,\tau_0\ra$.
 We shall make sure that $E_n$'s remain dense in
 $\la \w,\tau_\alpha\ra$ for all $\alpha\leq\hot c$.
Let $\{\la S_\alpha,x_{\alpha}\ra:\alpha<\hot c\}$
 be an
enumeration of $[\w]^\w\times\w$ such that for each $\la S,x\ra\in
[\w]^\w\times\w$ there are cofinally many ordinals $\alpha$ such
that $\la S,x\ra=\la S_\alpha,x_\alpha\ra$. Fix an enumeration
$\{\la F^\alpha_n:n\in\w\ra:\alpha<\hot c\}$ of
$\prod_{n\in\w}[E_n]^{<\w}$. We shall also need a mad family  on
$\w$ of size $\hot c$ obtained as follows. Let $\A\subset [\w]^\w$
be a compact almost disjoint family of size $\hot c$. It can easily
be constructed by, e.g., considering the family of all branches
through $2^{<\w}$, and then copying it via any bijection between
$\w$ and $2^{<\w}$. Let $\C$ be any mad family extending $\A$. Note
that $\C\neq\A$ since there are no analytic mad families by \cite[Corollary 4.7]{Mat77}.

Suppose that for some $\alpha<\hot c$ and all $\delta\in\alpha$ we
have already constructed a topology $\tau_\delta$ on $\w$, a family
$\Y_\delta\subset [\w]^\w$, and for every $Y\in \Y_\delta$ either
$n(Y)\in\w$ (such $Y$ will be called \emph{vertical}) or $C(Y)\in\C$
(such $Y$ will be called \emph{horizontal})
 such that the following conditions are satisfied:
\begin{itemize}
\item[$(1)$] $\tau_\beta\subset \tau_\delta$ for all
$\beta\leq\delta$;
\item[$(2)$]
 $\Y_\beta\subset \Y_\delta$ for all $\beta\leq\delta$
and $\Y_\delta $ consists of sequences convergent in
$\la\w,\tau_\delta\ra$;
\item[$(3)$]
For every $\beta<\delta$, if  $x_\beta$ is a limit point  of
$S_\beta$, then
  there exists $Y\in\Y_\delta $ converging to $x_\beta$ with
$|Y\cap S_\beta|=\w$;
\item[$(4)$]
Every vertical $Y\in\Y_\delta$ is contained in $E_{n(Y)}$, and each
horizontal $Y\in\Y_\delta$ is a subset of
  $\bigcup_{n\in C(Y)}E_n$;
Moreover, in the latter case  $|Y\cap E_n|\leq 1$ for all $n\in
C(Y)$;
\item[$(5)$] For every $\beta<\delta$ there exists $U_\beta\in\tau_\delta$
and $A_\beta\in \A$ such that $ U_\beta=\w\setminus\bigcup_{n\in
A_\beta}F^\beta_n,$ and
$\{A_\beta:\beta<\delta\}\cap\{C(Y):Y\in\Y_\delta\}=\emptyset$;
\item[$(6)$] $\tau_\delta$ is generated by $\tau_0\cup\{U_\beta:\beta<\delta\}$
as a subbase;
\item[$(7)$] $E_n$ is dense in $\la\w,\tau_\delta\ra$ for all
$n\in\w$.
\end{itemize}
Let us note that $(5)$ and $(6)$ imply $(7)$, as well as the fact
that for every  $\C'\in [\C]^{<\w}$ and $K\in [\w]^{<\w}$ we have
that $\tau_\delta\uhr\bigcup_{n\in K\cup \bigcup\C'}E_n$ has a
countable base. Moreover, if $\C'\in
[\C\setminus\{A_\beta:\beta<\delta\}]^{<\w}$, then
$$\tau_\delta\uhr\bigcup_{n\in K\cup \bigcup\C'}E_n=
\tau_0\uhr\bigcup_{n\in K\cup \bigcup\C'}E_n.$$

We shall consider several cases.
\smallskip

\textbf{\textit{I}}. $\alpha$ is limit. \\
It is easily checked that the topology $\tau_\alpha$ generated by
$\bigcup_{\delta<\alpha}\tau_\delta$ as a base,  along with
$\Y_\alpha=\bigcup_{\delta<\alpha}\Y_\delta$ satisfies
 $(1)$-$(7)$
for  $\delta=\alpha$.
\smallskip

\textbf{\textit{II}}.  $\alpha=\delta+1$ and    $x_\delta$ is a limit point
of $S_\delta$ in $\la\w,\tau_\delta\ra$.
\\
 If $|Y\cap
S_\delta|=\w$ for some $Y\in\Y_\delta$ converging to $x_\delta$,
then we set $\Y_\alpha=\Y_\delta$,  pick any
$$A_\delta\in\A\setminus\{C(Y):Y\in\Y_\delta\},$$
 let $U_\delta$ be such as in item $(6)$,
denote by $\tau_\alpha$ the topology generated by
$\tau_\delta\cup\{U_\delta\}$ as a subbase,
 and note that all the
conditions $(1)$-$(7)$ are satisfied. E.g., each
$Y\in\Y_\alpha=\Y_\delta$ is convergent also in
$\la\w,\tau_\alpha\ra$ since $U_\delta$ almost contains all such
$Y$.

 So consider
the case $|Y\cap S_\delta|<\w$ for all $Y\in\Y_\delta$ converging to
$x_\delta$. If there exists $n\in\w$ such that
$x_\delta\in\overline{S_\delta\cap E_n}^{\tau_\delta}$, let
$Y_\delta\in [S_\delta\cap E_n]^\w$ be a sequence converging to
$x_\delta$ (remember that all the $\tau_\beta$'s restricted to $E_n$
are the same, and thus turn $E_n$ into a copy of the rationals), and
in this case we set $n(Y_\delta)=n$. Let us assume now that  there
is no such $n$. We claim that there exists a sequence $Y\in
[S_\delta]^\w$ converging to $x_\delta$ in $\la\w,\tau_\delta\ra$.
Let $\C'$  be the family of all those $C\in\C$ such that there
exists a sequence $Y_C\in [S_\delta]^\w$ convergent to $x_\delta$ in
$\la\w,\tau_0\ra$ such that $Y_C\subset\bigcup_{n\in C}E_n$ and
$|Y_C\cap E_n|\leq 1$ for all $n\in C$. Two cases are possible.

a)\   $\C'$ is finite.\\
It follows that $x_\delta$ is not in the closure of
$S_\delta'=S_\delta\setminus\bigcup\{E_n:n\in\cup\C'\}$ with respect
to $\tau_0$. Indeed, otherwise there exists a sequence $Y\in
[S_\delta']^\w$ converging to $x_\delta$ in $\la\w,\tau_0\ra$, and
since it cannot have infinite intersection with any $E_n$, we
conclude that there exists $L\in [\w]^\w$ with $|Y\cap
E_n|\neq\emptyset$ for all $n\in L$. Then $L\cap\cup\C'=\emptyset$,
but on the other hand
 any $C\in\C$ with $|C\cap L|=\w$ must be in $C'$, a contradiction.
Consequently, $x_\delta$ is also not in the closure of $S_\delta'$
with respect to $\tau_\delta$, and hence it must be in the closure
of $S_\delta''=S_\delta\cap\bigcup\{E_n:n\in\cup\A'\}$ with respect
to $\tau_\delta$. However, $\tau_\delta\uhr S_\delta''$ has
countable base, so there exists a sequence $Y_\delta\in
[S_\delta'']^\w$ convergent to $x_\delta $ with respect to
$\tau_\delta$. By shrinking $Y_\delta$, if necessary, we may assume
find $C(Y_\delta)\in\C'$ satisfying $(4)$ along with $Y_\delta$. It
remains to set $\Y_\alpha=\Y_\delta\cup\{Y_\delta\}$,
 pick any $A_\delta\in\A\setminus\{C(Y):Y\in\Y_\alpha\}$,
 and let $U_\delta$ and $\tau_\alpha$ be such as required in
$(5)$, $(6)$ for $\alpha$.

b)\  $\C'\supset\{C_k:k\in\w\}$, where $C_k\neq C_m$ for $k\neq m$.\\
For every $k\in\w$ let $Y_{C_k}$ be witnessing for $C_k\in\C'$. By
shrinking $Y_{C_k}$'s, if necessary, we may assume that
$|\bigcup_{k\in\w}Y_{C_k}\cap E_n|\leq 1$ for all $n\in\w$. Let
$Y\subset\bigcup_{k\in\w}Y_{C_k}$ be such that $Y_{C_k}\subset^*Y$
for all $k\in\w$ and $Y$ converges to $x_\delta$ with respect to
$\tau_0$. Such  $Y$ obviously exists since $\tau_0$ has a countable
base. Set $L=\{n\in\w:Y\cap E_n\neq\emptyset\}$ and note that $L$
cannot be covered by finitely many elements of $\C$ because $|L\cap
C_k|=\w$ for all $k\in\w$. Thus
$$\{A\cap L\: :\: A\in\A, |A\cap L|=\w\}$$
cannot be a mad family of infinite subsets of $L$ since it is
analytic, and hence there exists $L'\in [L]^\w$  which is almost
disjoint from all $A\in\A$. Let $C\in\C$ be such that $|C\cap
L'|=\w$, denote by $Y_\delta$ the sequence $Y\cap\bigcup_{n\in C\cap
L'}E_n$, set $C(Y_\delta)=C$, and
$Y_\alpha=\Y_\delta\cup\{Y_\delta\}$. Finally, pick any $A_\delta\in
A\setminus\{C(Y):Y\in\Y_\alpha\}$  and let $U_\delta$ and
$\tau_\alpha$ be such as required in $(5)$, $(6)$ for $\alpha$.
Since $A_\beta\cap C(Y)$ is finite for all $Y\in\Y_\alpha$ and
$\beta<\alpha$, we conclude that $Y\subset^* U_\beta$ for all
$Y\in\Y_\alpha$ and $\beta<\alpha$, and hence all $Y\in\Y_\alpha$
remain convergent with respect to $\tau_\alpha$.
\smallskip

\textbf{\textit{III}}.  $\alpha=\delta+1$ and    $x_\delta$ is not a limit
point of $S_\delta$ in $\la\w,\tau_\delta\ra$.
\\
In this case we set $x_\delta'=0$ (any $i\in\w$ instead of $0$ would
work), $S_\delta'=\w\setminus\{0\}$, and repeat what we have done in
item $II$, with $\la S_\delta',x_\delta'\ra$ instead of $\la
S_\delta,x_\delta\ra$. Note that $x_\delta'$ must be in the closure
of $S_{\delta}'$ in $\la\w,\tau_\delta\ra$ since this topological
space has no isolated points, because it has disjoint dense subsets
by $(7)$.

This completes our recursive construction of the objects mentioned
in $(1)$-$(7)$ so that these conditions are satisfied. The space
$\la\w,\tau_{\hot c}\ra$ is as required: $(2),(3)$ imply the
Frechet-Urysohn property, and $(5)$ entails the failure of the
$H$-separability.
\end{proof}


\section{A Fr\'echet-Urysohn $\alpha_{4}$ space which is not $H$-separable under $\hot p=\hot c$} \label{sec3}

The main result of this section may be thought of as a  version of Theorem~\ref{zfc}, namely we shall get a zero-dimensional $\alpha_4$ example like in Theorem~\ref{zfc}, but at the cost of an additional set-theoretic assumption.

The next easy statement can be considered as a folklore and we shall
refer to it as ``there are no $(\w,<\hot b)$-gaps''. A very similar
argument can  be found in \cite[page 578]{Jec03}.
\begin{lemma}\label{no_w_<b_gaps}
Suppose that $\A\subset [\w]^\w$, $|\A|<\hot b$, $\{B_i:i\in\w\}\subset [\w]^\w$,
and $|A\cap B_i|<\w$ for all $A\in\A$ and $i\in\w$. Then there exists
$X\subset\w$ such that $A\subset^*\w\setminus X$ and $B_i\subset^* X$ for any $A\in\A$
and $i\in\w$.
\end{lemma}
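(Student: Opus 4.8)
The plan is to exploit the hypothesis $|\A|<\hot b$ in the only way it can reasonably be used here: to bound, by a single function, the family of ``escape functions'' recording how far along each $B_i$ one must travel to get past a given $A\in\A$. The set $X$ is then simply read off from such a bounding function.

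Concretely, I would first attach to each $A\in\A$ a function $f_A\in\w^\w$, setting $f_A(i)=\max(A\cap B_i)+1$ if $A\cap B_i\neq\emptyset$ and $f_A(i)=0$ otherwise. This makes sense because $|A\cap B_i|<\w$, and, identifying each $k\in\w$ with $\{0,\dots,k-1\}$ as usual, it guarantees $A\cap(B_i\setminus f_A(i))=\emptyset$ for every $i\in\w$. Since $|\{f_A:A\in\A\}|\le|\A|<\hot b$, there is a single $g\in\w^\w$ with $f_A\le^* g$ for every $A\in\A$.

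Now put $X=\bigcup_{i\in\w}(B_i\setminus g(i))$. On the one hand $B_i\setminus X\subseteq B_i\cap g(i)$ is finite, whence $B_i\subset^* X$ for every $i$. On the other hand, fix $A\in\A$ and choose $i_A$ with $f_A(i)\le g(i)$ for all $i\ge i_A$; then for $i\ge i_A$ we have $B_i\setminus g(i)\subseteq B_i\setminus f_A(i)$, so $A\cap(B_i\setminus g(i))=\emptyset$, and therefore $A\cap X=\bigcup_{i<i_A}\bigl(A\cap(B_i\setminus g(i))\bigr)$ is a finite union of finite sets, hence finite. Thus $A\subset^*\w\setminus X$, as required.

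The only step that requires any thought is the passage through $\hot b$ — noticing that the functions $f_A$ should be dominated simultaneously rather than handled one at a time — and I do not anticipate a genuine obstacle. It is worth remarking that no monotonicity of, nor covering assumption on, the $B_i$'s is used, so no preliminary normalisation is necessary; indeed the set $X$ produced is automatically contained in $\bigcup_{i\in\w}B_i$.
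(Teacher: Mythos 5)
Your proof is correct and is essentially the paper's own argument: define $f_A$ bounding $A\cap B_i$, dominate all $f_A$ by a single $g$ using $|\A|<\hot b$, and take $X=\bigcup_{i\in\w}(B_i\setminus g(i))$. The only cosmetic difference is that the paper takes each $f_A$ increasing, which your verification shows is unnecessary.
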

\begin{proof}
For each $A\in\A$ find an increasing  function $f_A\in\w^\w$ with $A\cap B_i\subset f_A(i)$ for all $i\in\w$,
and let $f\in\w^\w$ be such that $f_A\leq^* f$ for all $A\in\A$. Then $X=\bigcup_{i\in\w}(B_i\setminus f(i))$
is as required.
\end{proof}

\begin{theorem}\label{p=c}
($\hot p=\hot c$) There exists a countable zero-dimensional
$\alpha_4$ Frechet-Urysohn
 space $X$
 without isolated points which is not $H$-separable.
\end{theorem}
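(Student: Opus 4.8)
The plan is to rerun the transfinite recursion of length $\mathfrak c$ from the proof of Theorem~\ref{zfc}: we build the final topology $\tau=\bigcup_{\alpha<\mathfrak c}\tau_\alpha$ on $\w$ as an increasing union of topologies, starting from a fixed $\tau_0$ homeomorphic to the rationals, together with a fixed sequence $\la E_n:n\in\w\ra$ of pairwise disjoint $\tau_0$-dense sets which we keep dense in every $\tau_\alpha$ and which will serve, exactly as in Theorem~\ref{zfc}, as the witness to the failure of $H$-separability. Cofinally often we must handle three families of tasks: pairs $\la S_\alpha,x_\alpha\ra\in[\w]^\w\times\w$ (for the Fr\'echet--Urysohn property), pairs $\la x_\alpha,\la T^\alpha_m:m\in\w\ra\ra$ with $T^\alpha_m\in[\w]^\w$ (for $\alpha_4$), and an enumeration $\{\la F^\alpha_n:n\in\w\ra:\alpha<\mathfrak c\}$ of $\prod_{n\in\w}[E_n]^{<\w}$ (for non-$H$-separability). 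As in Theorem~\ref{zfc} we also keep a set $\Y_\alpha$ of committed convergent sequences with $|\Y_\alpha|<\mathfrak c$, and maintain the invariants that every $\tau_\alpha$ is crowded (has no isolated point), each $E_n$ is $\tau_\alpha$-dense, and every member of $\Y_\alpha$ converges in $\tau$.

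The decisive new point is that we want $\tau$ to be zero-dimensional, so we arrange that at every successor stage the set $U_\delta$ added to the subbase is \emph{clopen in} $\tau_{\delta+1}$, equivalently \emph{closed in} $\tau_\delta$ (and we never need it to be $\tau_\delta$-open, since then it already belongs to $\tau_\delta$). As $\tau_0$ has a clopen base and a subbase of clopen sets generates a base of clopen sets, $\tau$ is then zero-dimensional; crowdedness of each $\tau_\alpha$ (which is preserved when we adjoin a crowded clopen set, its complement being $\tau_\delta$-open hence crowded too) then yields absence of isolated points. The assumption $\mathfrak p=\mathfrak c$, equivalently $\mathfrak b=\mathfrak c$, enters first in the Fr\'echet--Urysohn step: if $x_\delta$ is a $\tau_\delta$-accumulation point of $S_\delta$, the trace on $S_\delta$ of the neighbourhood filter of $x_\delta$ is a filter base of infinite sets with a base of size $\le\aleph_0+|\delta|<\mathfrak p$, so it has a pseudointersection $Y\in[S_\delta]^\w$, which then converges to $x_\delta$ in $\tau_\delta$; we put it into $\Y_{\delta+1}$ (this replaces the case analysis of Theorem~\ref{zfc}). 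It enters a second time in the $\alpha_4$ step: with all $T^\delta_m$ converging to $x_\delta$ in $\tau_\delta$, fix a $\tau_\delta$-neighbourhood base $\{W_\xi:\xi<\lambda\}$ at $x_\delta$, $\lambda<\mathfrak b$, and for each $\xi$ a function $g_\xi\in\w^\w$ bounding the entry times of the (enumerated) $T^\delta_m$ into $W_\xi$; a $g\in\w^\w$ eventually dominating all $g_\xi$ lets us choose $t_m\in T^\delta_m$ past position $g(m)$ so that $S=\{t_m:m\in\w\}$ converges to $x_\delta$ in $\tau_\delta$ while $S\cap T^\delta_m\neq\emptyset$ for every $m$; we add $S$ to $\Y_{\delta+1}$. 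In both steps the newly committed sequence converges in $\tau_{\delta+1}$ because, as explained next, we choose $U_\delta$ only after these sequences are selected, requiring $U_\delta$ to respect all of $\Y_\delta$ together with them.

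The heart of the construction is the non-$H$-separability step. Given $\la F^\delta_n\ra$, we must produce a $\tau_\delta$-closed set $U_\delta$ that is (i) crowded and proper, so adjoining it creates no isolated point; (ii) in $\tau_\delta$ a neighbourhood of each limit of a committed sequence that it contains, and disjoint from every other such limit — since the members of $\Y_\delta$ with distinct limits are pairwise almost disjoint ($\tau_\delta$ being Hausdorff), this guarantees that after this stage each member of $\Y_\delta$ is almost inside $U_\delta$ or almost inside $\w\setminus U_\delta$ according to its limit, hence stays convergent; (iii) $U_\delta\cap F^\delta_n=\emptyset$ for infinitely many $n$; and (iv) leaves each $E_n$ dense in $\tau_{\delta+1}$. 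We build $U_\delta$ as (the $\tau_0$-closure of) a carefully chosen countable set: the set $D_\delta$ of limits of members of $\Y_\delta$ is countable, we split it into finitely (or countably) many points to be put inside and the rest outside, and we interpolate between the "inside'' sequences and the "outside'' sequences using the ``no $(\w,<\mathfrak b)$-gaps'' Lemma~\ref{no_w_<b_gaps} (applicable since $|\Y_\delta|<\mathfrak c=\mathfrak b$), simultaneously arranging that the resulting closed crowded set avoids infinitely many of the $F^\delta_n$ and still contains a dense-in-itself copy of (a tail of) each $E_n$. Limit stages are unions, as in Case~I of Theorem~\ref{zfc}.

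At the end, the first two uses of $\mathfrak p=\mathfrak c$ together with cofinality of the enumerations give the Fr\'echet--Urysohn and $\alpha_4$ properties; clopenness of all adjoined subbasic sets gives zero-dimensionality; crowdedness gives the absence of isolated points; and, exactly as in Theorem~\ref{zfc}, for arbitrary $\la F_n\ra\in\prod_n[E_n]^{<\w}$, choosing $\delta$ with $\la F^\delta_n\ra=\la F_n\ra$, the nonempty open set $U_\delta$ misses $F_n$ for infinitely many $n$, so $\la E_n\ra$ witnesses non-$H$-separability. I expect the main obstacle to be precisely requirement~(iii) reconciled with (i), (ii) and (iv): zero-dimensionality forces every neighbourhood-refining set we ever add to be clopen, hence closed in the previous topology — a very rigid demand — and the real content of the proof is showing that, under $\mathfrak p=\mathfrak c$, one can nonetheless always find such a clopen set that destroys infinitely many of the adversary's finite sets, respects every committed convergent sequence, keeps every $E_n$ dense, and introduces no isolated point.
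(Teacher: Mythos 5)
Your outer architecture (a length-$\mathfrak c$ recursion, pseudointersections below $\mathfrak p$ for the Fr\'echet--Urysohn step, a dominating-function argument below $\mathfrak b$ for $\alpha_4$, and the sets $E_n$ as the witness to non-$H$-separability) matches the paper's. But there is a genuine gap exactly where you yourself locate ``the real content of the proof'': you never explain how to find a nonempty open $U_\delta$ that misses $F^\delta_n$ for infinitely many $n$ \emph{and} almost contains every committed convergent sequence whose limit lies in $U_\delta$. The obstruction is concrete: if $Y\in\Y_\delta$ converges to a point of $U_\delta$, then $Y\subset^*U_\delta$, so $U_\delta$ can only avoid $F^\delta_n$ for $n$ ranging over a set $G$ with $G\cap I_Y$ finite, where $I_Y=\{n: Y\cap E_n\neq\emptyset\}$ (the adversary may place the point of $Y\cap E_n$ into $F^\delta_n$). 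Thus one needs a single infinite $G$ almost disjoint from \emph{all} the traces $I_Y$, $Y\in\Y_\delta$, and nothing in your construction prevents the family $\{I_Y\}$ from growing into an essentially maximal almost disjoint family over the course of $\mathfrak c$ stages, at which point no such $G$ exists. Your appeal to Lemma~\ref{no_w_<b_gaps} does not address this, since that lemma presupposes the almost-disjointness you would need to have maintained. The paper's solution is precisely a coherence invariant you are missing: it fixes a $P_{\mathfrak c}$-point $\G$ (available under $\mathfrak p=\mathfrak c$) and maintains throughout that every committed sequence $Y$ satisfies $Y\subset\bigcup_{n\in I_Y}E_n$ with $I_Y\in\G^*$ (its condition $(5)$); then the $P_{\mathfrak c}$-point property yields $G_\delta\in\G$ with $G_\delta\cap I_Y$ finite for all $Y$, and $(6)$ follows. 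This forces extra work in the FU and $\alpha_4$ steps too (each newly committed sequence must be shrunk so that its trace lands in $\G^*$), which your sketch omits.

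Two smaller points. First, your claim that $U_\delta$ clopen in $\tau_{\delta+1}$ is ``equivalently closed in $\tau_\delta$'' over-constrains the construction: the paper simply adjoins both $U_\delta$ and $\w\setminus U_\delta$ to the base, so neither need be $\tau_\delta$-closed; zero-dimensionality is preserved anyway. Second, preserving convergence of \emph{all} committed sequences when adding $U_\delta$ and its complement requires an explicit device (the paper's ``saturated sets'' built from the disjoint family $\{C_n\}$ of Claim~\ref{aux1_0} via a recursive closure), not just the limit-point dichotomy you describe; in particular one must also check $U_\delta\neq\emptyset$ at the end, which the paper does by observing $\min C_n\in U_\delta$ for $n\in V_\delta$.
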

\begin{proof}
Let $\G$ be a $P_{\hot c}$-point, i.e., an ultrafilter such that for
any $\G'\in [\G]^{<\hot c}$ there exists $G\in\G$ such that
$G\subset^* G'$ for all $G'\in\G'$. It is well-known and easy to
check that $\hot p=\hot c $ implies the existence of such an
ultrafilter. The underlying set of $X$ will be $\w$, and let
$\tau_0$ be a topology on $\w$ such that $\la \w,\tau_0\ra$ is
homeomorphic to the rationals.

The final topology $\tau=\tau_{\hot c}$ turning $\w$ into a space
with the needed properties  will be constructed recursively over
ordinals $\alpha\in\hot c$ as an increasing union $\tau_{\hot
c}=\bigcup_{\alpha<\hot c}\tau_\alpha$ of intermediate
zero-dimensional topologies.


 Let $\la E_n:n\in\w\ra$ be a sequence of
mutually disjoint dense subsets of $\la \w,\tau_0\ra$.
 We shall make sure that $E_n$'s remain dense in
 $\la \w,\tau_\alpha\ra$ for all $\alpha\leq\hot c$.
Let $\{\la S_\alpha,\A_\alpha,x_{\alpha}\ra:\alpha<\hot c\}$
 be an
enumeration of $[\w]^\w\times [[\w]^\w]^\w\times\w$ such that for
each $\la S,\A,x\ra\in [\w]^\w\times [[\w]^\w]^\w\times\w$ there are
cofinally many ordinals $\alpha$ such that $\la S,\A,x\ra=\la
S_\alpha,\A_\alpha,x_\alpha\ra$. Fix also an enumeration $\{\la
F^\alpha_n:n\in\w\ra:\alpha<\hot c\}$ of
$\prod_{n\in\w}[E_n]^{<\w}$.

Suppose that for some $\alpha<\hot c$ and all  $\delta\in\alpha$ we
have already constructed a zero-dimensional topology $\tau_\delta$
on $\w$, an almost disjoint family $\Y_\delta\subset [\w]^\w$, and
for every $Y\in Y_\delta$ and element $I_Y\in\G^*=\mathcal
P(\w)\setminus\G$
 such that the following conditions are satisfied:
\begin{itemize}
\item[$(1)$] The weight of $\tau_\delta$ is $<\hot c$;
\item[$(2)$] For every $n,x\in\w$ there exists $Y\in\Y_0$
such that $Y\in [E_n]^\w$ and $Y$ converges to $x$;
\item[$(3)$]
 $\Y_\beta\subset \Y_\delta$ for all $\beta\leq\delta<\alpha$
and $\Y_\delta $ consists of sequences convergent in $\la\w,\tau_\delta\ra$;
\item[$(4)$]
For every $\beta<\delta$, if  $x_\beta$ is a limit point  of
$S_\beta$ and every $A\in\A_\beta$ is a sequence converging to
$x_\beta$
 in $\la\w,\tau_\delta\ra$, then
 \begin{itemize}
\item there exists $Y\in\Y_\delta $ converging to $x_\beta$ with
$|Y\cap S_\beta|=\w$;
\item there
exists $Y\in\Y_\delta$ such that $Y\cap
(A\setminus\{x_\beta\})\neq\emptyset$ for infinitely many
$A\in\A_\beta$;
 \end{itemize}
\item[$(5)$]
Every $Y\in\Y_\delta$ is contained in $\bigcup_{n\in I_Y}E_n$;
Moreover, either $|Y\cap E_n|=1$ for all $n\in I_Y$, or $|I_Y|=1$;
\item[$(6)$] For every $\beta<\delta$ there exists $U_\beta\in\tau_\delta$
and $G_\beta\in\G$ such that $ U_\beta\cap\bigcup_{n\in
G_\beta}F^\beta_n=\emptyset; $
\item[$(7)$] $E_n$ is dense in $\la\w,\tau_\delta\ra$ for all
$n\in\w$.
\end{itemize}
We shall consider several cases.
\smallskip

\textbf{\textit{I}}. $\alpha$ is limit. \\
It is easily checked that the topology $\tau_\alpha$ generated by
$\bigcup_{\delta<\alpha}\tau_\delta$ as a base,  along with
$\Y_\alpha=\bigcup_{\delta<\alpha}\Y_\delta$ satisfies
 $(1)$-$(7)$
for  $\delta=\alpha$.
\smallskip

\textbf{\textit{II}}.  $\alpha=\delta+1$,    $x_\delta$ is a limit point of
$S_\delta$ in $\la\w,\tau_\delta\ra$, and $\A_\delta$ consists of
mutually disjoint sequences convergent to $x_\delta$.\\
 If $|Y\cap
S_\delta|=\w$ for some $Y\in\Y_\delta$ converging to $x_\delta$,
then we denote by $Y_{0,\delta}$ one of these $Y$'s. Similarly, if
$\big\{A\in\A_\beta\: :\: Y\cap (A\setminus\{x_\beta\})\neq
\emptyset\big\}$ is infinite for some $Y\in\Y_\delta$, we denote by
$Y_{1,\delta}$ one of these $Y$'s.

So consider the case  $|Y\cap S_\delta|<\w$ for all $Y\in\Y_\delta$
converging to $x_\delta$.
 Since the weight of
$\la\w,\tau_\delta\ra$ is $<\hot c=\hot p$, there exists a sequence
$Y_{0,\delta}\in [S_\delta]^\w$ convergent to $x_\delta$. Passing to
an infinite subset of $Y_{0,\delta}$, if necessary, we may assume
that either there exists $n\in\w$ such that $Y_{0,\delta}\subset
E_n$, in which case we set $I_{Y_{0,\delta}}=\{n\}$, or there exists
an infinite $I_{Y_{0,\delta}}\in\G^*$ such that
$Y_{0,\delta}\subset\bigcup_{n\in I_{Y_{0,\delta}}}E_n$ and
$|Y_{0,\delta}\cap E_n|=1$ for all $n\in I_{Y_{0,\delta}}$. Note
that  $Y_{0,\delta}$ is almost disjoint from any $Y\in\Y_{\delta}$.

Now suppose that  $\{A\in\A_\beta\: :\: Y\cap
(A\setminus\{x_\beta\})\neq \emptyset\}$ is finite for all $Y\in\A$.
Replacing each $A\in\A_\delta$ with an infinite subset thereof, we
may assume that for each $A\in\A_\delta$, either there exists
$n(A)\in\w$ such that $A\subset E_{n(A)}$ (such $A$ will be called
\emph{vertical}), or $|A\cap E_n|\leq 1$ for all $n\in\w$ (such $A$
will be called \emph{horizontal}). Since every space of character
$<\hot b$ is $\alpha_1$, there exists a sequence
$Y\subset\cup\A_\delta$ convergent to $x_\delta$ such that $|Y\cap
A|=1$ for all $A\in\A_\delta$. The fact that no element of
$\Y_\delta$ converging to $x_\delta$ intersects infinitely many
$A\in\A_\delta$ yields that $Y$ is almost disjoint from all elements
of $\Y_\delta$.
 Note also that $\A_\delta$ is a
disjoint family, and hence each infinite subset of $Y$ intersects
infinitely many elements of $\A_\delta$.

 If there are infinitely many
horizontal sequences, then $|Y\cap E_n|\neq\emptyset$ for infinitely
many $n$, and hence by shrinking $Y$ to some infinite $Y_{1,\delta}$
we may assume that $|Y_{1,\delta}\cap E_n|\leq 1$ for all $n$ and
$|Y_{1,\delta}\cap E_n|= 1$ if and only if  $n\in I$ for some
$I\in\G^*$. In this case we set $I_{Y_{1,\delta}}=I$.

Now suppose that all but finitely many $A\in\A_\delta$ are vertical.
If $\{n(A):A\in\A_\delta,A$ is vertical$\}$ is infinite, then as
before by shrinking $Y$ to some infinite $Y_{1,\delta}$ we may
assume that $|Y_{1,\delta}\cap E_n|\leq 1$ for all $n$ and
$|Y_{1,\delta}\cap E_n|= 1$ if and only if $n\in I$ for some
$I\in\G^*$. Also in this case we set $I_{Y_{1,\delta}}=I$.

In the remaining case there exists $n\in\w$ with
$\{A\in\A_\delta:n(A)=n\}$ infinite. Then  we set
$Y_{1,\delta}=Y\cap E_n$, $I_{Y_{1,\delta}}=\{n\}$, and note that
$Y_{1,\delta}$ is infinite.

Shrinking $Y_{0,\delta}$ and $Y_{1,\delta}$, if necessary, we may
assume that  they are disjoint. Finally, set
$\Y_\alpha=\Y_{\delta+1}=\Y_\delta\cup\{Y_{0,\delta},Y_{1,\delta}\}$
and note that $(3),(4),$ and $(5)$ are satisfied  for $\delta+1$
instead of $\delta$.

Next, we pass to the construction of $U_\delta, G_\delta$ satisfying
$(6)$. This will require that if $x\in U_\delta$ and $Y\in\Y_\delta$
converges to $x$, then $Y\subset^* U_\delta$. Let $G_\delta\in\G$ be
such that $|G_\delta\cap I_Y|<\w$ for all $Y\in\Y_\alpha$. Then
$F_\delta:=\bigcup_{n\in G_\delta}F^\delta_n$ is almost disjoint
from any $Y\in\Y_\alpha$. We need the following auxiliary

\begin{claim} \label{aux1_0}
For every $n\in\w$ there exists $C_n\subset\w$ such that
\begin{itemize}
\item[$(8)$] $Y\subset^*C_n$ for any $Y\in\Y_\alpha$ converging to $n$ in
$\la\w,\tau_\delta\ra$;
\item[$(9)$] $C_n\cap C_m=\emptyset$ for all $n\neq
m$;
\item[$(10)$] $C_n\cap F_\delta=\emptyset$ for all $n\neq
m$.
\end{itemize}
\end{claim}
\begin{proof}
For every $n\in\w$ let us denote by $\Y_{\alpha,n}$ the family of
all $Y\in\Y_\alpha$ converging to $n$, and fix a family
$\{O^n_k:k\in\w\setminus\{n\}\}\subset\tau_0$ such that $O^n_k$ is a
clopen neighbourhood of $k$ not containing $n$. Then $|Y\cap
O^n_k|<\w$ for all $Y\in\Y_{\alpha,n}$ and $k\neq n$, and hence
Lemma~\ref{no_w_<b_gaps} implies that there exists $C^0_n\subset\w$
such that $Y\subset^*C^0_n$ and $|C^0_n\cap O^n_k|<\w$ for all $Y,k$
as above. Let us note that $|C^0_n\cap Y|<\w$ for all
$Y\in\Y_\alpha\setminus \Y_{\alpha,n}$ because $Y\subset^* O^n_k$
for $k\neq n $ being the limit point of $Y$. Thus letting
$C_0=C^0_0\setminus F_\delta$ and
$C_n=C^0_n\setminus(\bigcup_{n'<n}C_{n'}\cup F_\delta)$ we get that
$\{C_n:n\in\w\}$ is a disjoint family satisfying $(8)-(10)$.
\end{proof}

The construction of $V_\delta:=\w\setminus U_\delta$  will be done
recursively over $k\in\w$, namely it will be constructed  as an
increasing union $\bigcup_{k\in\w}V^{\delta}_k$. Moreover, we shall
make sure that adding its complement to $\tau_\delta$ can also be
done without any harm. More precisely, set
$V^\delta_{-1}=\emptyset$, $V^\delta_0=F_\delta$ and assuming that
$V^\delta_k$ is constructed we let
$$V^\delta_{k+1}=V^\delta_k\cup\bigcup\{C_n\setminus\{\min C_n\}:n\in V^\delta_k\setminus V^\delta_{k-1}\}.$$
In the sequel we call a subset $B$ of $\w$ \emph{saturated} if $n\in
B$ implies $C_n\subset^* B$. It follows that $V_\delta$ defined
above is saturated. We claim that $U_\delta:=\w\setminus V_\delta$
is saturated as well. Indeed, otherwise there exists $n\in U_\delta$
such that $C_n\cap V_\delta\neq\emptyset$. Let $k\in\w$ be the
minimal such that $C_n\cap V^\delta_{k+1}\neq\emptyset$ (note that
$C_n\cap V^\delta_0=C_n\cap F_\delta=\emptyset$ by $(9)$). Then
there exists $m\in V^\delta_k\setminus V^\delta_{k-1}$ with $C_n\cap
C_m\neq\emptyset$, which is impossible since $n\neq m$ (because
$n\in U_\delta$ and $m\in V_\delta$).

Finally, $U_\delta\neq\emptyset$ since $\min C_n\in U_\delta$ for
all $n\in V_\delta.$

 Let $\tau_\alpha$ be the topology
generated by $\tau_\delta\cup\{U_\delta,V_\delta\}$  as a base.
 This way we get a
$0$-dimensional topology satisfying $(6)$. Since both
$U_\alpha,V_\alpha$ are saturated, it is easy to see that all $Y\in
\Y_\alpha$ are convergent also in $\la\w,\tau_\alpha\ra$. Indeed,
suppose that $y$ is the limit of $Y\in\Y_\alpha$ and $y\in U\cap
U_\delta$, where $U\in\tau_\delta$. Then $Y\subset^*U$ (because $Y$
converges in $\la\w,\tau_\delta\ra$) and $Y\subset^* C_y\subset^*
U_\delta$, and therefore $Y\subset^* U\cap U_\delta$. The same
argument works also for $V_\delta$ instead of $U_\delta$. Since
$U\in\tau_\delta$ was arbitrary, we conclude that $Y$ converges to
$y$ also in $\la\w,\tau_\alpha\ra$.

 To see that also $(7)$
holds, let us fix $n\in\w$ and $y\in U$, where $U\in \tau_\alpha$.
Let $Y\in \Y_0\cap[E_n]^\w \subset\Y_\alpha$ be convergent to $y$.
Then $Y\subset^* U$, and hence $Y\subset^*U\cap E_n$, which yields
$U\cap E_n\neq\emptyset$.

This completes our construction of the objects mentioned in
$(1)$-$(7)$ for all $ \delta<\hot c$, so that these conditions are
satisfied. Condition $(4)$  implies that $X=\la\w,\tau_{\hot c}\ra$
 is Frechet-Urysohn and $\alpha_4$. Indeed, if $\A$ is a countable
 family of mutually disjoint sequences convergent to some $x\in\w$
 in $X=\la\w,\tau_{\hot c}\ra$, then
 there exists $\delta<\hot c$ such that
 $\la \w\setminus\{x\},\A,x\ra=\la S_\delta,\A_\delta,x_\delta\ra$.
 Then $Y_{1,\delta}\in\Y_{\delta+1}$ is convergent to $x_\alpha$
 in $\la\w,\tau_{\hot c}\ra$, and it intersects infinitely many
 elements of $\A_\delta=\A$. Similarly one can check the
 Frechet-Urysohn property. Finally, $(6)$ implies the failure of the
 $ H$-separability, which completes the proof.
\end{proof}

The assumption of being $\alpha_4$
cannot be much improved in Theorem~\ref{p=c}
 as the following fact shows.

\begin{proposition} \label{alpha_2}
Every separable Frechet-Urysohn $\alpha_2$ space $X$ is
$H$-se\-pa\-ra\-ble.
\end{proposition}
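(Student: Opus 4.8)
The plan is to reduce the $H$-separability of $X$ to a purely local statement at each point of a fixed countable dense set, and then to establish that local statement using the Fr\'echet-Urysohn property together with $\alpha_2$. First I would fix a countable dense set $Q=\{q_k:k\in\w\}\subset X$ and, given a sequence $\la D_n:n\in\w\ra$ of dense subsets of $X$, I would observe that it suffices to produce, for each $k\in\w$ \emph{separately}, finite sets $B^k_n\subset D_n$ $(n\in\w)$ such that every neighbourhood of $q_k$ meets $B^k_n$ for all but finitely many $n$. Granting this, one sets $F_n=\bigcup_{k\le n}B^k_n$: these are finite subsets of $D_n$, and for any nonempty open $U$ one picks $q_k\in U$ and notes that $U$, being a neighbourhood of $q_k$, meets $B^k_n\subset F_n$ for all but finitely many $n\ge k$, hence meets $F_n$ for all but finitely many $n$. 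This is exactly the conclusion of $H$-separability.

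For the local statement, I would fix $x=q_k$ and let $N=\{n\in\w:x\notin D_n\}$. If $N$ is finite the required sets are obtained trivially (take $B^k_n=\{x\}$ for $n\notin N$ and any singleton of $D_n$ for $n\in N$), so assume $N$ is infinite and enumerate it as $\{n_i:i\in\w\}$. For each $i$ one has $x\in\overline{D_{n_i}}\setminus D_{n_i}$, so the FU property yields a sequence $S_i\in[D_{n_i}\setminus\{x\}]^\w$ converging to $x$; applying $\alpha_2$ at $x$ to $\la S_i:i\in\w\ra$ produces $S\in\Gamma_x$ with $|S\cap S_i|=\w$ for all $i$. I would then enumerate $S$ injectively as $\{s_j:j\in\w\}$, so that $s_j\to x$, and for each $i$ use the infinitude of $S\cap S_i$ to choose an index $j(i)\ge i$ with $s_{j(i)}\in S_i\subset D_{n_i}$, putting $B^k_{n_i}=\{s_{j(i)}\}$ and $B^k_n=\{x\}$ for $n\notin N$. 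To verify the local property, given a neighbourhood $U$ of $x$ note that $\{j:s_j\notin U\}$ is finite, bounded by some $M$; then for every $i\ge M$ we have $j(i)\ge i\ge M$, hence $s_{j(i)}\in U$, so $B^k_{n_i}\cap U\ne\emptyset$, while $B^k_n\cap U=\{x\}\ne\emptyset$ for $n\notin N$. Thus $B^k_n$ meets $U$ for all but finitely many $n$.

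Assembling the pieces then gives the theorem. The only delicate point I anticipate is that a single sequence $\la F_n:n\in\w\ra$ must work simultaneously for every $q_k$; this is handled precisely by the truncation ``$k\le n$'' in the definition of $F_n$, together with the requirement $j(i)\ge i$, which forces the witnesses chosen along $S$ to enter eventually every neighbourhood of $q_k$. Everything else is routine bookkeeping, so I do not expect any substantial obstacle beyond organizing this diagonalization correctly.
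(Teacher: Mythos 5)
Your proposal is correct and follows essentially the same route as the paper: reduce to a local statement at each point of a countable dense set, use FU plus $\alpha_2$ to select one point of each $D_n$ converging to that point, and assemble via $F_n=\bigcup_{k\le n}B^k_n$. The only cosmetic differences are that the paper splits the dense set into isolated and non-isolated points globally (using that isolated points lie in every dense set) where you use the set $N$ pointwise, and that you spell out explicitly how the literal $\alpha_2$ statement yields the convergent diagonal selection $s^i_n\in S^i_n$, which the paper invokes directly.
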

\begin{proof}
Let $\la E_n:n\in\w\ra$ be a sequence of dense subsets of $X$, and
$D$ be a countable dense subset of $X$. Let $\{d_i:i\in\w\}$ be an
enumeration of all non-isolated elements of $D$, and
$\{d'_i:i\in\w\}=D\setminus \{d_i:i\in\w\}$. For every $i,n\in\w$
let us fix an injective sequence $S^i_n$ of elements of
$E_n\setminus\{d_i\}$ convergent to $d_i$. Applying $\alpha_2$ we
can find a sequence $\la s^i_n:n\in\w\ra$ convergent to $d_i$ such
that $s^i_n\in S^i_n\subset E_n$. Let us note that $\{d'_i:i\in\w\}$
consists of isolated points of $X$ and hence it is a subset of $E_n$
for all $n\in\w$. It follows that the sequence
$$ \big\la F_n:=\{d'_i:i\leq n\}\cup\{s^i_n:i\leq n\}\: :\: n\in\w  \big\ra $$
is a witness for the $H$-separability of $X$. Indeed, given any open
$U\subset X$, either there exists $i\in\w$ such that $d'_i\in U$,
and thus $U\cap F_n\neq\emptyset$ for all $n\geq i$; or there exists
$i\in\w$ with $d_i\in U$, and then $U\cap F_n\neq\emptyset$ as soon
as $s^i_n\in U$ and $n\geq i$, and $s^i_n\in U$ for all but finitely
many $n$ since $\la s^i_n:n\in\w\ra$ convergent to $d_i$.
\end{proof}

\section{Products of Fr\'echet-Urysohn spaces and $M$-separability}
\label{sec4}

In \cite{BarDow12}, the authors proved the next important result.
\begin{theorem}\label{BarDow}
(PFA) The product of finitely many countable FU spaces is $M$-separable.
\end{theorem}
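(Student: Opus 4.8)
This is the theorem of Barman and Dow from \cite{BarDow12}, and the plan is simply to reproduce their argument; I only sketch its shape. Write $X=\prod_{i<k}X_i$; since each $X_i$ is countable, so is $X$. Fix once and for all a countable dense $D\subseteq X$. Two easy reductions are available. First, a set $A\subseteq X$ is dense as soon as $D\subseteq\overline A$, because then $X=\overline D\subseteq\overline A$; so it suffices to arrange that the selected sets accumulate at each of the countably many points of $D$. Second, by the equality of $M$- and $mM$-separability established in \cite{GRUENHAGE20111352}, we may assume that the given sequence $\la D_n:n\in\w\ra$ of dense subsets of $X$ is decreasing. Thus, fixing such a sequence, the goal is to produce finite sets $F_n\subseteq D_n$ with $d\in\overline{\bigcup_{n}F_n}$ for every $d\in D$.

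The proof is a PFA argument, and the natural template is to introduce a forcing $\IP$ whose conditions are finite approximations to the desired data: a finite partial function $p$ with $p(n)\in[D_n]^{<\w}$, together with a finite set of \emph{side conditions}, one for each $d$ in some finite subset of $D$, each committing $\bigcup_n F_n$ to accumulate at $d$ --- for instance a finite initial segment $T^d\subseteq\bigcup_{n\in\mathrm{dom}\,p}p(n)$ of a sequence converging to $d$, with the promise that all later values of $p$ continue $T^d$ toward $d$. A stronger condition extends $p$, lengthens the active $T^d$'s (the new point chosen inside the $D_n$ prescribed at that stage and inside a small neighbourhood of $d$), and may introduce new $T^d$'s. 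Granting that $\IP$ is proper, one applies PFA to $\IP$ and the \emph{countably many} dense sets $\{p:\mathrm{dom}\,p\supseteq m\}$ and $\{p:|T^d|\ge m\}$ ($d\in D$, $m\in\w$); this yields finite sets $F_n\subseteq D_n$ and, for every $d\in D$, an infinite sequence $T^d\subseteq\bigcup_n F_n$ converging to $d$, whence $\bigcup_n F_n$ is dense.

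The steps I expect to be the real obstacle are the properness of $\IP$ and, relatedly, the density of the sets $\{p:|T^d|\ge m\}$ --- equivalently, the ability to keep feeding each $T^d$ with points of the currently prescribed $D_n$ that lie near $d$ and genuinely converge to $d$. For a single countable FU space this is immediate, since $d\in\overline{D_n}$ already yields convergent sequences in $D_n$; but a product of countable FU spaces need not be FU, so these convergent sequences cannot be read off the $D_n$'s directly --- they must be assembled simultaneously from the FU structure of the individual factors $X_i$, in a manner compatible with the stagewise demands of the construction, and it is exactly at this point (rather than in any equality of cardinal characteristics) that the strength of PFA is used, in line with Theorem~\ref{th:main}. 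Finally, since the construction can be run selecting a single point from each $D_n$ and threading these points along the sequences $T^d$, the same argument in fact produces points $x_n\in D_n$ with $\{x_n:n\in\w\}$ dense, i.e. the $mR$-separability of the product mentioned in the introduction.
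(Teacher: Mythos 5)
There is no proof to compare against here: the paper quotes this result verbatim from Barman and Dow (\cite[3.3]{BarDow12}) and uses it as a black box, only remarking that the original argument in fact yields $mR$-separability. So the question is whether your sketch would stand on its own as a proof, and it would not. What you have written is the standard \emph{template} for a PFA argument (finite working parts plus side conditions, $\aleph_1$ many dense sets), and you yourself flag that the two load-bearing steps --- properness of $\IP$ and density of the sets $\{p:|T^d|\ge m\}$ --- are exactly where all the mathematics lives. Deferring them means the proof has not been given: the entire content of the theorem is that these steps can be carried out for a \emph{product} of FU spaces, which need not itself be FU, nor first countable.

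Two concrete points make the gap more than a formality. First, your side conditions must produce, for each $d\in D$, a set $T^d\subseteq\bigcup_n F_n$ that genuinely converges to $d$ (or at least accumulates at $d$); you cannot replace convergence by genericity against the dense sets ``$T^d$ meets $U$'' for each open $U\ni d$, because the character of $X$ at $d$ can be $\mathfrak c=\w_2$ under PFA, exceeding the $\aleph_1$ dense sets PFA allows. But a union of generically chosen finite pieces, each merely placed ``in a small neighbourhood of $d$,'' has no reason to converge in a non-first-countable space --- ensuring convergence of the limit object is precisely the delicate part, and it requires extracting and coherently amalgamating convergent sequences in each factor $X_i$ using the FU property coordinatewise. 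Second, properness of $\IP$ is not a routine verification for such posets; indeed Theorem~\ref{th:main} of this paper shows the conclusion fails under MA$+\neg$CH, so any correct argument must use properness (or some other feature of PFA) in an essential, non-cosmetic way, and your sketch gives no indication of how the $M$-generic condition is built. As it stands the proposal identifies the right skeleton and the right obstacles but proves nothing beyond what a citation of \cite{BarDow12} already provides.
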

It is worth noting that the proof given by the authors provides a stronger result: their argument proves that the product of finitely many countable FU space under PFA is in fact $mR$-separable.\\
\smallskip

The aim of this section is to show that MA is not sufficent in Theorem \ref{BarDow}. Namely we shall define a new set-theoretic principle which is consistent with MA, and, when combined with it, it implies the negation of the conclusion of Theorem \ref{BarDow}.\\
	\smallskip
	
Let $\tau_0$ be a topology on $\w$ turning it into a space
homeomorphic to the rationals $\mathbb Q$. In what follows
$(\ast_{\mathbb Q})$
 stands for the following statement:
\begin{quote}
 MA holds and there exists a mad family $\A$ on $\w$ such that
 \begin{itemize}
 \item every disjoint pair $\A',\A''\in [\A]^{<\hot c}$ is \emph{separated}, i.e. there exists $S\subset\w$
 such that $A'\subset^*S$ and $A''\subset^*\w\setminus S$ for any
 $A'\in\A'$ and $A''\in\A''$;
 \item Every $A\in\A$ is either closed discrete or a convergent sequence in $\la\w,\tau_0\ra$.
\end{itemize}
\end{quote}
The formally weaker statement $(\ast)$ obtained from $(\ast_{\mathbb Q})$ by dropping the second item is known to be consistent.
More precisely, the following result was proved by Dow and Shelah.
\begin{theorem}\cite{DowShe12}
It is consistent with MA and $\mathfrak{c} = \omega_{2}$ that there is a maximal almost disjoint family such that any disjoint pair of its $\leq \omega_{1}$-sized subsets are separated.
\end{theorem}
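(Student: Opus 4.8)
The plan is to obtain the model by a finite-support ccc iteration $\la\IP_\alpha,\name{\IQ}_\alpha:\alpha<\w_2\ra$ over a ground model of $\mathit{GCH}$, assembling the required mad family $\A=\{A_\alpha:\alpha<\w_2\}$ generically along the way. Since $2^{\w_1}=\w_2$ in the ground model, fix a standard bookkeeping enumerating with cofinally many repetitions all pairs $(\name{\IQ},\name{\mathcal D})$, where $\name{\IQ}$ is a name for a ccc poset of size $\le\w_1$ and $\name{\mathcal D}$ a name for a family of at most $\w_1$ of its dense subsets, together with all names $\name X$ for infinite subsets of $\w$. At stage $\alpha$, if the bookkeeping returns such a pair we let $\name{\IQ}_\alpha$ be that poset (an \emph{$\mathit{MA}$ step}); if it returns a name $\name X$ that is forced to be almost disjoint from every $A_\beta$, $\beta<\alpha$, we let $\name{\IQ}_\alpha$ be a ccc poset of finite conditions adding an infinite $A_\alpha$ with $A_\alpha\cap X$ infinite and $A_\alpha$ almost disjoint from $\{A_\beta:\beta<\alpha\}$, chosen moreover as generically as the next paragraph requires (a \emph{madness step}); otherwise $\name{\IQ}_\alpha$ is trivial.

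Two invariants are maintained by induction on $\alpha$: (i) $\IP_\alpha$ is ccc, which is immediate from finite support; and (ii) $\IP_\alpha$ forces that $\A_\alpha=\{A_\beta:\beta<\alpha\}$ is almost disjoint and \emph{separated} in a suitably strong, ccc-indestructible sense --- in particular strong enough that for every partition $\A_\alpha=\mathcal B_0\sqcup\mathcal B_1$ the natural poset $\mathbb S(\mathcal B_0,\mathcal B_1)$ of finite approximations to a set $S$ with $A\subseteq^* S$ for all $A\in\mathcal B_0$ and $A\cap S$ finite for all $A\in\mathcal B_1$ is ccc. For invariant (ii) there are two points. First, forcing a single new member $A_\alpha$ does not spoil separatedness: here the genericity of the approximation poset $\name{\IQ}_\alpha$ is used to keep $A_\alpha$ ``spread out'' relative to the --- possibly already $\w_1$-sized --- family $\A_\alpha$, so that $A_\alpha$ cannot enter any obstruction. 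Second, and more delicately, an arbitrary ccc forcing preserves this notion of separatedness; this preservation statement is the technical heart of \cite{DowShe12} and is proved by a reflection argument --- a purported name for an obstruction is pushed into a countable elementary submodel and ccc-ness is used to read off an actual obstruction already in the ground model, contradicting the inductive hypothesis. Since every $\name{\IQ}_\alpha$ is ccc and covered by one of these two cases, (ii) is propagated through successors, and at limits a would-be obstruction reflects, via the ccc-ness of the iteration, to an earlier stage, so (ii) persists there as well.

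Work now in $W=V^{\IP_{\w_2}}$. Then $\hot c=\w_2$ and $\mathit{MA}$ holds, by the usual Solovay--Tennenbaum bookkeeping: a ccc poset of size $\le\w_1$ together with $\le\w_1$ dense sets appears in some $V^{\IP_\alpha}$, $\alpha<\w_2$ (using $\mathrm{cf}(\w_2)>\w_1$ and the downward absoluteness of ``ccc''), hence is caught cofinally often and its genericity filter is realized. The family $\A$ is mad: by the standard catching argument any $X\in[\w]^\w$ in $W$ either already meets some $A_\beta$ infinitely, or is handed by the bookkeeping at a stage $\alpha$ at which it is still almost disjoint from $\A_\alpha=\A\cap V^{\IP_\alpha}$ (later members of $\A$ being new generic reals), and then the madness step at $\alpha$ throws in an $A_\alpha$ with $A_\alpha\cap X$ infinite; either way $X$ is not almost disjoint from $\A$. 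Finally, any disjoint pair $\A',\A''\in[\A]^{\le\w_1}$ is separated: since $\mathrm{cf}(\w_2)>\w_1$, the set $\A'\cup\A''$ is contained in some $\A_\alpha$, so by invariant (ii) together with the preservation fact the poset $\mathbb S(\A',\A'')$ is ccc in $W$, and applying $\mathit{MA}$ to it and to the $\le\w_1$ dense sets that force the approximation to grow past every $n\in\w$, to commit every $A'\in\A'$, and to commit every $A''\in\A''$ produces a set $S$ separating $\A'$ from $\A''$. Hence $\A$ witnesses the theorem.

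The genuinely hard point is the ccc-indestructibility in invariant (ii). The naive candidate ``$\A_\alpha$ contains no Luzin gap'' is \emph{not} preserved by arbitrary ccc forcing --- a ccc poset may well introduce a brand-new Luzin-type matching among the old members of $\A_\alpha$ --- so the correct invariant must be a more robust strengthening whose preservation under ccc forcing requires the elementary-submodel reflection of \cite{DowShe12}; correspondingly, the madness-step posets must be designed with that stronger property in view. Once these are in hand, the bookkeeping, the handling of $\mathit{MA}$, and the madness and separation verifications in $W$ are all routine.
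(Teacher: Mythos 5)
Your overall Solovay--Tennenbaum skeleton (a finite-support ccc iteration over a model of GCH, with bookkeeping alternating MA-steps with steps that throw in new generic members of $\A$, and catching every potential counterexample to madness and every pair $\A',\A''$ at some stage below $\w_2$) is the right frame and matches the construction of \cite{DowShe12} that this theorem cites, as well as the strengthened version sketched in the proof of Theorem~\ref{(*Q)}. The gap is exactly where you locate it, but your proposed resolution points in the wrong direction. You posit an invariant ``separatedness in a suitably strong, ccc-indestructible sense'' and assert that (a) it is preserved by arbitrary ccc forcing via an elementary-submodel reflection, and (b) the new generic member can be kept ``spread out'' so as not to destroy it; neither the invariant nor either preservation claim is supplied. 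The actual mechanism is different: the invariant is that every $\A'\in[\A]^{\w_1}$ is \emph{special}, i.e., carries a colouring $c:[\A']^{<\w}\to\w$ and a linear order such that $<$-increasing tuples of the same colour $k$ have pairwise intersections contained in $k$. This property needs no preservation lemma at all --- the witness $c$ is upward absolute once forced --- and by a Silver-style argument (Lemma~\ref{silver}) $\mathit{MA}_{\w_1}$ plus $\w_1$-specialness yields separation of arbitrary disjoint subfamilies directly. What does require work is \emph{creating} the witnesses: at cofinally many stages one must force with the specializing poset $\IQ_{\A,<}$ of finite approximations to $c$, and the technical heart of \cite{DowShe12} is that this poset is ccc. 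That is emphatically not true for arbitrary almost disjoint families of size $\w_1$ (there are such families that cannot be specialized by any $\w_1$-preserving forcing), but it holds here because the members of $\A$ are added as Mathias generics over the dual filters $\F(\A^-_\beta)$ of the parts of the family built so far.

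So the missing ideas are concretely: the notion of a special almost disjoint family and the lemma that $\w_1$-special together with $\mathit{MA}_{\w_1}$ implies separated; the interleaving of the specializing posets $\IQ_{\A^-_\beta,<}$ at cofinally many limit stages of the iteration; and the proof that these posets are ccc, which hinges on the Mathias-genericity of the members of $\A$ and is the one step that cannot be black-boxed as routine. Without replacing your ``ccc-indestructible separatedness'' by an upward-absolute combinatorial witness of this kind, the induction you describe at successor and limit stages cannot be carried out.
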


To prove the "whole statement", namely that $(\ast_{\mathbb Q})$ is consistent, first we remind the definitions of standard posets that will be involved.
Every filter $\F$ gives rise  to a natural forcing notion $\mathbb M_\F$
introducing a generic subset $X\in [\w]^\w$ such that $X\subset^* F$
for all $F\in\F$ as follows: $\mathbb M_\F$ consists of pairs
$\la s,F\ra$ such that $s\in [\w]^{<\w}$, $F\in\F$, and $\max s<\min F$.
A condition $\la s,F\ra$ is stronger than $\la t,G\ra$
if $F\subset G$, $s$ is an end-extension of $t$, and
$s\setminus t\subset G$. $\mathbb M_\F$ is usually called \emph{Mathias forcing
associated with} $\F$.

Every infinite almost disjoint family $\A\subset [\w]^\w$ generates
the dual filter $\F(\A)$ consisting of sets $F$ such that there
exists a finite $\A'\subset\A$ with the property
$\w\setminus\cup\A'\subset^*F$. We shall be interested in Mathias
forcing  associated with filters which are usually larger than
$\F(\A)$ for certain almost disjoint families $\A$ defined as
follows. Recall that $\tau_{0}$ is a topology on $\w$ such that
$\la\w,\tau_{0}\ra$ is homeomorphic to the rationals in $[0,1]$, and let
$\theta:\w\to\mathbb Q\cap [0,1]$ be the corresponding
homeomorphism. In what follows we shall refer to the topology $\tau_{0}$
whenever we speak about topological properties of subsets of $\w$,
e.g.,   convergence,  being closed discrete, etc.

Let us fix an almost disjoint family $\A\subset[\w]^\w$  consisting
of convergent sequences as well as closed discrete subsets.
Given any $n\in\w$, denote by $\F_n(\A)$ the filter generated by $\F(\A)\cup\{O\in\tau_{0}:n\in O\}$ and
note that if $G$ is $\mathbb M_{\F_n(A)}$-generic, then
$X_G:=\bigcup_{\la s,F\ra\in G}s$ is a sequence convergent to $n$,
and $|X_G\cap Y|=\w$ for any sequence $Y\in V$ convergent to $n$.

 Fix   $r\in [0,1]\setminus\mathbb
Q$ and let $\F_r(\A)$ be the filter generated by
$$\F(\A)\cup\{\theta^{-1}[O\cap\mathbb Q] \: :\: r\in O\mbox{ is an open subset of }\mathbb R \}.$$
By genericity, if $G$ is $\mathbb M_{\F_r(A)}$-generic, then
$X_G:=\bigcup_{\la s,F\ra\in G}s\subset\w$ is  closed and discrete
because $\theta[X_G]$ converges to an irrational number $r\in
[0,1]$, and $|X_G\cap Y|=\w$ for any closed discrete  subset $Y$ of
$\w$ such that $r$ is a limit point of $\theta[Y]$.

Recall from \cite{DowShe12} that an almost disjoint family
$\A\subset [\w]^\w$ is called \emph{special}, if there exists
$c:[\A]^{<\w}\to\w$ and a linear ordering $<$ of $\A$ such that for
each $n\in\w$ and sequences
$$ \la B_0,B_1,\ldots,B_{n-1}\ra, \la C_0,C_1,\ldots,C_{n-1}\ra\in\A^n $$
increasing with respect to $<$, if
$$c\la
B_0,B_1,\ldots,B_{n-1}\ra=c\la C_0,C_1,\ldots,C_{n-1}\ra=k,$$
then
for all $i\neq j$, $i,j\in n$ we have $B_i\cap C_j\subset k$.
There is a natural poset introduced in \cite[Def.~2.4]{DowShe12} and
denoted by $\IQ_{\A,<},$ which for an almost disjoint family $\A$
and a linear order $<$ thereof,  introduces a function $c$ having
the above property: A condition in this poset is a
 finite function $d:\mathcal P(\A_d)\to\w$, where $\A_d\in[\A]^{<\w}$, such that
  for each $n\in\w$ and $<$-increasing
sequences
$$ \la B_0,B_1,\ldots,B_{n-1}\ra, \la C_0,C_1,\ldots,C_{n-1}\ra\in\A_d^n, $$
if $d\la B_0,B_1,\ldots,B_{n-1}\ra=d\la
C_0,C_1,\ldots,C_{n-1}\ra=k,$ then for all $i\neq j$, $i,j\in n$ we
have $B_i\cap C_j\subset k$.
It is clear that $\A$ is special in $V^{\IQ_{\A,<}}$. However, this
poset may collapse cardinals: In ZFC one can construct an almost
disjoint family $\A$ of size $\w_1$ which cannot be made special by
any forcing which preserves $\w_1$, see \cite[p.~108]{DowShe12} and
references therein.

An almost disjoint family $\A\subset [\w]^\w$ is called
\emph{$\w_1$-special}, if any $\A'\in [\A]^{\w_1}$ is special. The
next fact is \cite[Prop.~1.5]{DowShe12}. It has been inspired by
earlier results of Silver, as it is stated in \cite{DowShe12}.
\begin{lemma}\label{silver}
If $\mathit{MA}_{\w_1}$ holds and $\A\in [[\w]^\w]^{\w_1}$ is $\w_1$-special, then
$\A$ is separated, i.e., for any $\A'\subset\A$ there exists $X\subset\w$ such that
$A\subset^*X$ for all $A\in\A'$ and $|A\cap X|<\w$ for all $A\in\A\setminus\A'$.
\end{lemma}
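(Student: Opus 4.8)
The plan is to imitate the classical Silver-type argument for making a tree or a ladder system "special" and then use $\mathit{MA}_{\w_1}$ to realize the required separating set.  First I would set up the combinatorics: let $\A'\subset\A$ be arbitrary and, thinking of $\A$ as linearly ordered by the witnessing order $<$, I would consider the natural ccc poset $\IP$ whose conditions are finite partial functions approximating the characteristic set $X$, i.e. finite partial functions $p\colon\w\to 2$ together with a finite promise set $F_p\in[\A]^{<\w}$, with the intention that $A\subset^* X$ for $A\in\A'$ and $|A\cap X|<\w$ for $A\in\A\setminus\A'$.  The density arguments that push more of each $A\in\A'$ into $X$, and more of each $A\in\A\setminus\A'$ out of $X$, are routine; the only thing that can fail is \emph{ccc}, and this is exactly where $\w_1$-specialness enters.

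The heart of the matter is the ccc verification.  Given an uncountable antichain (or a candidate thereof), I would thin it out by a $\Delta$-system argument on the finite promise sets $F_p$ (their traces on $\w$, their sizes, the pattern of the root), extracting an uncountable $Z$ of conditions with root $R$ and with the "new" part $F_p\setminus R$ of each condition enumerated $<$-increasingly as $\la B^p_0,\dots,B^p_{n-1}\ra$ of fixed length $n$.  The finite pieces $F_p\cap[\w]^{<\w}$ being fixed, two conditions $p,q\in Z$ are compatible as soon as their promises do not force a conflict, and a conflict between the promise about $B^p_i$ and the promise about $B^q_j$ (for $i\ne j$) can only occur above $\max(\mathrm{dom}\,p\cup\mathrm{dom}\,q)$ if $B^p_i\cap B^q_j$ is large.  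Here I invoke that $\{B^p_i:p\in Z, i<n\}$ lives inside an $\w_1$-sized, hence special, subfamily $\A''$ of $\A$: its colouring $c$ has the property that once the colour vector $\la c(B^p_0),\dots,c(B^p_{n-1})\ra$ is fixed, $B^p_i\cap B^q_j\subset k$ for all $i\ne j$, where $k$ is the common colour value.  Since there are only countably many possible colour vectors, I shrink $Z$ once more so that all $B^p_i$ ($i$ fixed) receive a fixed colour, hence pairwise intersections of distinct-index blocks are bounded by a single $k$; choosing any two conditions in this final uncountable set whose domains both contain $k$ and agree on $k$ (only finitely many patterns below $k$), I get a common extension.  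Thus $\IP$ is ccc.

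Applying $\mathit{MA}_{\w_1}$ to $\IP$ with the $\w_1$-many dense sets "$|A\cap X|\ge m$" for $A\in\A'$ and "$A\cap X\subset k$" witnessed for $A\in\A\setminus\A'$ (the latter packaged via the promise sets: being in the promise forbids adding further points of $A$ to $X$, and a density argument puts every $A\in\A\setminus\A'$ into the promise of some generic condition), I obtain a filter meeting all of them and let $X=\bigcup\{p^{-1}(1):p\in G\}$.  By genericity $A\subset^* X$ for every $A\in\A'$ and $A\cap X$ is finite for every $A\in\A\setminus\A'$, which is exactly separatedness.  The main obstacle, and the only non-bookkeeping step, is the ccc proof above: it is precisely the point where specialness of the relevant $\w_1$-sized subfamily is used to turn an unbounded-intersection obstruction into a bounded one after fixing finitely much colour data.
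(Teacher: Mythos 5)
First, a remark on the comparison itself: the paper does not prove Lemma~\ref{silver} at all --- it is quoted verbatim as \cite[Prop.~1.5]{DowShe12}, with only the attribution to Silver's method. So your proposal cannot be measured against an in-paper argument; it has to stand on its own. Your overall strategy (a ccc poset of finite approximations with promises, $\mathit{MA}_{\w_1}$, and specialness entering exactly in the ccc verification via a $\Delta$-system and colour homogenization) is indeed the Silver-style route that Dow and Shelah have in mind, so the plan is the right one. But as written it has two genuine gaps.

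The first gap is the claim that ``the density arguments are routine; the only thing that can fail is ccc.'' For the poset you describe --- finite partial functions $p\colon\w\to 2$ with a finite promise set $F_p$, where promises constrain only the values assigned to \emph{future} domain elements --- the situation is exactly reversed: any two conditions with the same finite function are compatible (take the union of the promise sets), so the poset is $\sigma$-centered and ccc is free; but the generic set then only gives $|A\cap X|=\w$ for $A\in\A'$, not $A\subset^*X$, because the set of elements of $A$ ``blocked'' by members of $\A\setminus\A'$ entering the promises grows unboundedly along the generic filter. Since $\mathit{MA}_{\w_1}$ certainly does not separate arbitrary almost disjoint families of size $\w_1$ (Luzin families exist in ZFC), specialness must be doing real work, and it can only do so if the promises are \emph{retroactive} commitments about the final $X$ on all of $A$ outside $\mathrm{dom}(p)$, together with a consistency requirement on each condition (all cross-intersections of its promised sets lie inside its domain). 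You need to fix this formulation before anything else makes sense; in particular ``a conflict can only occur \ldots if $B^p_i\cap B^q_j$ is large'' is wrong as stated --- all these intersections are finite, and a single point of $B^p_i\cap B^q_j$ outside both domains already kills compatibility.

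The second gap is in the ccc verification. Specialness controls $B^p_i\cap B^q_j$ only for $i\neq j$, so you must also uniformize the pattern of which indices fall in $\A'$ versus $\A\setminus\A'$ (so that same-index blocks make the same promise and cannot conflict), and you must handle root-versus-new and root-versus-root pairs (these follow from the internal consistency of a single condition, but that has to be said). More seriously, your closing move --- ``choosing any two conditions whose domains both contain $k$ and agree on $k$'' --- is not available: the colour value $k$ is determined only after thinning the given uncountable set, and nothing forces even two of its members to have domain covering $k$. The correct final step is to thin once more so that the traces $B^p_i\cap k$ are the same for all conditions (countably many possibilities), and then observe that for $i\neq j$ one has $B^p_i\cap B^q_j\subset k$ by specialness, hence $B^p_i\cap B^q_j=B^p_i\cap B^p_j$, which is contained in $\mathrm{dom}(f_p)$ by the consistency of the single condition $p$; so no new conflicts arise and the union of the two conditions is a common extension. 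With these repairs the argument goes through, but as submitted the plan does not close.
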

We are in a position now to present the sketch of the proof of the following
\begin{theorem} \label{(*Q)}
$(*_{\mathbb Q})$ is consistent.
\end{theorem}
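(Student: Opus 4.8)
The plan is to follow the Dow–Shelah strategy from \cite{DowShe12} essentially verbatim, but starting from a carefully chosen almost disjoint family whose elements are topologically tame with respect to $\tau_0$, and iterating Mathias-type forcings that both make an ad family maximal and make all of its $\w_1$-sized subfamilies special, while folding in an iteration for $\mathit{MA}$. Concretely, one fixes in the ground model (which we may take to satisfy $\mathit{CH}$) an almost disjoint family $\A_0$ on $\w$ all of whose members are either convergent sequences or closed discrete subsets of $\la\w,\tau_0\ra$; such a family of size $\w_1$ is trivial to build by recursion, using that every point of $\mathbb Q$ and every irrational in $[0,1]$ is the limit of a convergent/closed-discrete set, and diagonalizing to keep almost disjointness. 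One then performs a finite-support ccc iteration $\la \IP_\xi,\name\IQ_\xi:\xi<\w_2\ra$ of length $\w_2$ bookkeeping three kinds of tasks: (a) for cofinally many $\xi$, a ccc poset witnessing an instance of $\mathit{MA}$ on a name for a poset of size $<\w_2$; (b) for cofinally many $\xi$, Mathias forcing $\mathbb M_{\F_n(\A)}$ or $\mathbb M_{\F_r(\A)}$ relative to the family $\A$ accumulated so far, at a point $n\in\w$ or an irrational $r$, to adjoin a new convergent sequence or closed discrete set which meets every old set of the same ``type'' infinitely — these new generic reals are added to the growing family; and (c) interleaved copies of $\IQ_{\A,<}$ (with $<$ an $\w_2$-like order refining the order of appearance) to make every $\w_1$-sized subfamily of the final $\A$ special. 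The final $\A$ is the union of all the generics together with $\A_0$.

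The order of the steps: First I would set up the ground model with $\mathit{CH}$ and the tame ad family $\A_0$, and fix the enumeration of the points $n\in\w$ and a cofinal set of irrationals $r$ that must be hit. Next I would define the iteration with a standard two-dimensional $\Delta$-system / reflection bookkeeping of length $\w_2$ that catches all relevant names, making sure that every poset used is ccc (for (a) this is the hypothesis; for (b) Mathias forcing with a filter is $\sigma$-centered hence ccc; for (c) one invokes the ccc-ness of $\IQ_{\A,<}$ proved in \cite{DowShe12}, which holds after the relevant $\mathit{MA}$ instances have been forced). Then I would verify that in the extension: (i) $\mathit{MA}+\neg\mathit{CH}$ holds, by the standard cofinal-catching argument and $\hot c=\w_2$; (ii) $\A$ is mad, because task (b) adjoins, for each $n$ and each relevant $r$, a set almost disjoint from nothing it needs to be and meeting every candidate ``diagonalizer'' infinitely, so no set in a final extension can be almost disjoint from all of $\A$ — here one uses that Mathias generics reap all ground-model reals of the appropriate kind, together with a reflection argument that any potential infinite set almost disjoint from $\A$ appears at some stage $<\w_2$ and is then killed; (iii) every member of $\A$ is either a convergent sequence or closed discrete in $\la\w,\tau_0\ra$, which is immediate from how the generics are defined via $\F_n(\A)$ and $\F_r(\A)$; and (iv) every $\A'\in[\A]^{\w_1}$ is $\w_1$-special, hence (by $\mathit{MA}_{\w_1}$ and Lemma~\ref{silver}) separated — this is where task (c) does its work, again via a reflection argument showing any such $\A'$ is contained in a family appearing at a stage where a copy of $\IQ_{\A,<}$ was applied.

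The main obstacle, and the place where I expect to have to reproduce genuine content of \cite{DowShe12} rather than cite it, is step (iv) together with the compatibility of tasks (b) and (c): one must arrange the linear order $<$ on the evolving family and the bookkeeping so that $\IQ_{\A,<}$ remains ccc throughout the iteration and so that every $\w_1$-sized subfamily of the \emph{final} $\A$ — not merely of some initial segment — gets specialized. Dow and Shelah handle exactly this by using an $\w_2$-like linear order and a reflection/elementary-submodel argument showing that an $\w_1$-sized subfamily is essentially captured below $\w_2$; I would adapt that argument, checking that inserting the Mathias steps (b) (which only add countably tame reals and are $\sigma$-centered) does not disturb either the ccc-ness of the $\IQ_{\A,<}$-steps or the reflection. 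A secondary, more routine subtlety is confirming that maximality in (ii) is not destroyed by later stages: since the iteration has length exactly $\hot c$ and every real appears at a stage of uncountable cofinality below $\w_2$, a standard argument shows $\A$ is mad in the final model. Modulo these reflection bookkeeping details, which are ``essentially the same argument as in \cite{DowShe12}'' as the introduction promises, the verification of the two bullet points of $(\ast_{\mathbb Q})$ is then direct.
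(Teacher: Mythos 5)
Your overall scheme coincides with the paper's: a finite-support ccc iteration of length $\w_2$ over a GCH ground model, with MA-instances handled at some stages and, at (cofinally many) other stages, a Mathias forcing $\mathbb M_{\F_{n}(\A)}$ or $\mathbb M_{\F_{r}(\A)}$ adjoining a new convergent sequence or closed discrete set to the growing family, interleaved (in the paper, taken in product at each limit stage) with copies of $\IQ_{\A,<}$ so that every $\w_1$-sized subfamily of the final $\A$ is special and hence, by Lemma~\ref{silver} and $\mathit{MA}_{\w_1}$, separated. Your treatment of madness (every infinite $B$ accumulates at some rational or irrational, is caught by the bookkeeping, and is then reaped by the corresponding Mathias generic) and of the two bullet points of $(\ast_{\mathbb Q})$ is the same as the paper's. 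One framing caveat: the paper's concluding Remark stresses that while the ccc-ness argument for $\IQ^1_\beta$ is lifted verbatim from \cite[Th.~2.6]{DowShe12}, the overall scheme must differ from \cite[Th.~2.1]{DowShe12}, because a mad family of convergent sequences and closed discrete sets can never be tight; your construction in fact already incorporates this change, so only the phrase ``essentially verbatim'' overstates the similarity.

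There is, however, one genuine gap. You seed the construction with a ground-model almost disjoint family $\A_0$ of size $\w_1$, ``built by recursion,'' and declare the final $\A$ to be $\A_0$ together with the generics. The separation requirement then applies in particular to $\A'=\A_0$ itself, so at some stage a copy of $\IQ_{\A,<}$ must specialize a family containing $\A_0$ while preserving $\w_1$. But, as the paper points out (citing \cite[p.~108]{DowShe12}), in ZFC there is an almost disjoint family of size $\w_1$ that cannot be made special by \emph{any} $\w_1$-preserving forcing, and nothing in your recursion rules out that $\A_0$ is such a family; the key lemma that $\IQ^1_\beta$ is forced to be ccc works precisely ``because elements of $\name{\A}^-_\beta$ are added as generic reals for corresponding Mathias posets,'' an argument that does not apply to ground-model members. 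The paper avoids this entirely by taking $\name{\A}_\beta=\{\name{X}_\xi:\xi\in(\beta+1)\cap\Lambda\}$, i.e.\ the family consists \emph{exclusively} of the Mathias generics, with no ground-model seed (the filters $\F_n(\cdot)$ and $\F_r(\cdot)$ are perfectly usable over a small or empty family). The repair is simply to drop $\A_0$; with that change your argument matches the paper's.
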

\begin{proof}
We shall present only the main ideas of the proof,  the remaining details are exactly the same as in the proof of
\cite[Theorem~2.6]{DowShe12}.  Let us denote by $\Lambda$ the set of all limit ordinals below $\w_2$.
Assuming GCH in $V$, we shall construct
a finitely supported iteration
$\la\IP_\alpha,\name{\IQ}_\beta:\alpha\leq\w_2,\beta<\w_2\ra$  of
ccc posets of size $\w_1$, along with a sequence
$\la\name{\A}_\beta:\beta<\w_2, \beta\in\Lambda\ra$, where
$\name{\A_\beta}=\{\name{X}_\xi:\xi\in(\beta+1)\cap\Lambda\}$ is a $\IP_{\beta+1}$-name for an almost disjoint family,
  as follows:
\begin{itemize}
\item If $\beta$ is a successor ordinal, then $\name{\IQ}_\beta$ is  ($\IP_\beta$-name for) a ccc poset of size
$\w_1$;
\item For $\beta\in\Lambda$ the poset $\name{\IQ}_\beta$ is a $\IP_\beta$-name for the product
$\name{\IQ}^0_\beta\times\name{\IQ}^1_\beta$, where
     \begin{itemize}
        \item[$(a)$] Letting $\name{\A}^-_\beta$ be a $\IP_\beta$-name for
$\bigcup_{\xi\in\beta\cap \Lambda}\name{\A}_\xi=\{\name{X}_\xi:\xi\in\beta\cap\Lambda\}$,
$\name{\IQ}^0_\beta$ is either $\mathbb M_{\F_{\name{r}_\beta}(\name{\A}^-_\beta)}$ for some
$\IP_\beta$-name $\name{r}_\beta$ for an irrational number in $[0,1]$, or
$\mathbb M_{\F_{n_\beta}(\name{\A}^-_\beta)}$ for some
 $n_\beta\in\w$, decided by a bookkeeping function as described below;
$\name{\IQ}^0_\beta$  produces a generic subset $\name{X}_\beta$ almost disjoint from all elements of
$\name{\A}^-_\beta$, and we set $\name{\A}_\beta=\name{\A}^-_\beta\cup\{\name{X}_\beta\}$;
        \item[$(b)$]
        $\name{\IQ}^1_\beta=\name{\IQ}_{\name{\A}^-_\beta,<}$,
        where $<$ is the linear order (actually  wellorder) of $\name{\A}^-_\beta$
        generated by the indices of its elements, i.e.,
        $\name{X}_\xi<\name{X}_{\zeta}$ iff $\xi<\zeta,$
        $\xi,\zeta\in\beta\cap\Lambda$.
    \end{itemize}
\end{itemize}
Almost literally following the argument in the proof of
\cite[Th.~2.6]{DowShe12}, we can show that $\IP_{\w_2}$ is ccc, the
only non-trivial ingredient being that $\name{\IQ}^1_\beta$ is
forced to be ccc by $1_{\IP_\beta}$. This, roughly speaking, happens
because elements of $\name{\A}^-_\beta$ are added as generic reals
for corresponding Mathias posets. Now a standard choice of a
suitable bookkeeping function ``delivering'' the names for posets
$\name{\IQ}_\beta$ at successor steps $\beta$, and $n_\beta$ or
$\name{r}_\beta$ for limit $\beta$, guarantees that $(*_{\mathbb
Q})$ holds in $V^{\IP_{\w_2}}.$ More precisely, MA is guaranteed at
successor stages, and $\A=\bigcup_{\beta\in\Lambda}\A_\beta$ is a
mad family consisting of convergent sequences (added at stages
$\beta\in \Lambda$ for which the bookkeeping function gives
$n_\beta\in\w$) and closed discrete sets (added at stages $\beta\in
\Lambda$ for which the bookkeeping function gives $\name{r}_\beta$);
The presence of $\name{\IQ}^1_\beta$'s for $\beta\in\Lambda$ implies
that any $\A'\in [\A]^{\w_1}$ is special, which together with
Lemma~\ref{silver} implies that any such $\A'$ is separated, and
thus completes the proof of the theorem.
\end{proof}
\noindent\textbf{Remark.} \ Even though the main part of our proof
of Theorem~\ref{(*Q)}, namely that   $\name{\IQ}^1_\beta$ is forced
to be ccc by $1_{\IP_\beta}$, can be done in exactly the same way as
the corresponding step in the proof of \cite[Th.~2.6]{DowShe12}, the
overall scheme of the proof of Theorem~\ref{(*Q)} is very different
from that of \cite[Th.~2.1]{DowShe12}. More precisely, the proof of
the latter gives a so-called \emph{tight} mad family, i.e., a mad
family $\A$ such that for every countable $\mathcal
X\subset\I(\A)^+$ (recall that $X\in\I(\A)^+$ iff $X$ has infinite
intersection with infinitely many elements of $\A$) there exists
$A\in\A$ such that $|A\cap X|=\w$ for all $X\in\mathcal X$. On the
other hand, no mad family $\A$ of subsets of $\mathbb Q$ consisting
of convergent sequences and closed discrete sets can be tight.
Indeed, it is an easy exercise to prove that for every
$0<\delta<\epsilon$ there exists a closed discrete $D\subset\IQ$
such that $D\in\I(\A)^+$ and $\delta<|x|<\epsilon$ for all $x\in D$.
Now let $\la D_n:n\in\w\ra$ be a sequence of closed discrete subsets
of $\mathbb Q$ which lie in $\I(\A)^+$ and such that
$\frac{3}{5\cdot 2^n}<|x|<\frac{4}{5\cdot 2^n}$ for all $x\in D_n$.
Then it is clear that there is neither a closed discrete subset of
$\IQ$ nor a convergent sequence there which has infinite
intersection with each (even infinitely many) of $D_n$'s. \hfill
$\Box$
\medskip

%
%


The overall scheme of the proof of the next theorem is patterned after that of
\cite[Theorem~2.24]{BarDow11}. However, there are also essential  differences because our construction
lasts $>\w_1$-many steps and hence
we have to make sure that we do not face a kind of Hausdorff gaps
consisting of convergent sequences
 in our construction, which is done with the help of a mad family witnessing
 $(\ast_{\mathbb Q})$.

\begin{theorem}\label{ma_weaker_pfa}
$(\ast_{\mathbb Q})$ There exist two countable Frechet-Urysohn spaces $X_0,X_1$ without isolated points whose product is not $M$-separable.
\end{theorem}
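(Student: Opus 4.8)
The plan is to build both spaces on the underlying set $\w$, as increasing unions $\tau^i=\tau^i_{\hot c}=\bigcup_{\alpha<\hot c}\tau^i_\alpha$ ($i\in\{0,1\}$) of zero-dimensional topologies, each refining a fixed copy $\tau_0$ of the rationals, and sharing a fixed sequence $\la E_n:n\in\w\ra$ of pairwise disjoint $\tau_0$-dense sets kept dense at every stage. The dense subsets of the product witnessing the failure of $M$-separability will be $D_n:=E_n\times E_n$, which are dense in $X_0\times X_1$ because each $E_n$ is dense in both factors; thus it suffices to arrange that for every $\la G_n:n\in\w\ra\in\prod_n[E_n\times E_n]^{<\w}$ there are nonempty open $U_0\in\tau^0$, $U_1\in\tau^1$ with $(U_0\times U_1)\cap G_n=\emptyset$ for all $n$ --- for then $\bigcup_n G_n$ is not dense in $X_0\times X_1$, and every selection $\la F_n:n\ra$ with $F_n\in[D_n]^{<\w}$ is of this form. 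Over a recursion of length $\hot c$ I would, at each successor stage $\xi$, both (i) handle a \emph{Fr\'echet demand} $\la S_\xi,x_\xi,i_\xi\ra$ provided by a bookkeeping function ranging over $[\w]^\w\times\w\times\{0,1\}$ with cofinal repetitions --- whenever $x_\xi$ is a $\tau^{i_\xi}_\xi$-limit point of $S_\xi$, I record a sequence $Y\in[S_\xi]^\w$ converging to $x_\xi$ in $\tau^{i_\xi}_\xi$ and add it to a family $\Y^{i_\xi}_{\xi+1}$; such a $Y$ exists since $\la\w,\tau^{i_\xi}_\xi\ra$ is $T_1$, crowded, and of weight $<\hot c=\hot p$, so the trace on $S_\xi$ of a neighbourhood base at $x_\xi$ is a filter base of size $<\hot p$ with the strong finite intersection property and hence has an infinite pseudo-intersection --- and (ii) adjoin to $\tau^0_\xi$, $\tau^1_\xi$ nonempty clopen sets $U^\xi_0$, $U^\xi_1$ with $(U^\xi_0\times U^\xi_1)\cap G^\xi_n=\emptyset$ for all $n$, where $\la G^\xi_n:n\ra$ is the sequence prescribed at stage $\xi$. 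Keeping the $E_n$ dense, the intermediate weights below $\hot c$, and all recorded sequences convergent, the terminal $X_0$, $X_1$ are countable (zero-dimensional, hence regular) Fr\'echet-Urysohn spaces without isolated points (no isolated points coming for free from the disjoint dense $E_n$'s), while the clopen sets from (ii) destroy $M$-separability of $X_0\times X_1$.

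For step (ii): to keep density of the $E_n$ automatic, I would place into each $\Y^i_0$, for all $n$ and all $x$, a vertical sequence inside $E_n$ converging to $x$ in $\tau_0$ (then every nonempty open subset of $\la\w,\tau^i_\alpha\ra$ meets every $E_n$, since these vertical sequences stay convergent). Given a prescribed $\la G^\xi_n\ra$, write $A^\xi_n$, $B^\xi_n$ for its sets of first, resp.\ second, coordinates (finite subsets of $E_n$). By a combinatorial dichotomy (the delicate point, see below) I would single out one coordinate --- say the first --- and some $N\in\w$, and adjoin to $\tau^0_\xi$ the clopen set $U^\xi_0:=\w\setminus D$, where $D\supseteq\bigcup_{n\geq N}A^\xi_n$ is built so as to \emph{respect} every $Y\in\Y^0_\xi$, i.e.\ $Y\subset^* D$ iff $\lim Y\in D$, and to have $\w\setminus D\neq\emptyset$; taking $U^\xi_1:=\w\setminus\bigcup_{n<N}B^\xi_n$, already a nonempty clopen subset of $\la\w,\tau^1_\xi\ra$ since $\bigcup_{n<N}B^\xi_n$ is finite, we get $(U^\xi_0\times U^\xi_1)\cap G^\xi_n=\emptyset$ for all $n$. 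Respecting the recorded sequences keeps them convergent past the refinement, and $\w\setminus D\neq\emptyset$ keeps $U^\xi_0$ nonempty --- and this last requirement is exactly where a Hausdorff-type gap could block us, as $|\Y^0_\xi|$ may exceed $\w_1$ and $\mathit{MA}$ does not rule out an $(\w_1,\w_1)$-gap among convergent sequences. $(\ast_{\mathbb Q})$ is used precisely to avoid this: during step (i) I would also shrink each recorded convergent $Y$ to lie (mod finite) inside some $A_Y\in\A$ --- possible by maximality of $\A$ --- necessarily of convergent type (as $Y$ converges in a Hausdorff refinement of $\tau_0$), with all recorded sequences on a common $A_Y$ converging to a common point; and I would arrange the bookkeeping so that at every stage fewer than $\hot c$ members of $\A$ have been used. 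Then the members of $\A$ whose recorded sequences must converge into $D$ and those whose sequences must avoid $D$ form a disjoint pair of subfamilies of $\A$ of size $<\hot c$, hence are separated by a single set by the separatedness clause of $(\ast_{\mathbb Q})$; starting from that set and from $\bigcup_{n\geq N}A^\xi_n$ and closing under the $C_n$-saturation device of the proof of Theorem~\ref{p=c} produces the $D$ sought, the closed-discrete members of $\A$ entering both in absorbing $\bigcup_{n\geq N}A^\xi_n$ and in the symmetric treatment of $X_1$ when the second coordinate is the one cut.

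The step I expect to be the main obstacle is making the dichotomy legitimate in \emph{every} case: showing that for every $\la G^\xi_n\ra$ at least one of the two coordinates admits, for a suitable $N$, a set $D$ that (a) contains $\bigcup_{n\geq N}A^\xi_n$ (resp.\ $\bigcup_{n\geq N}B^\xi_n$), (b) respects all previously recorded convergent sequences, and (c) has nonempty complement. Part (b), hence the harmlessness of the refinement, is delegated to the separatedness of $\A$ much as in Theorems~\ref{zfc} and~\ref{p=c}; the real difficulty is securing (a) and (c) simultaneously --- that after swallowing a whole tail of the adversary's finite sets and then closing up, $D$ still misses a point --- and it is here that both clauses of $(\ast_{\mathbb Q})$ (its separatedness, and the fact that $\A$ consists of convergent sequences and closed-discrete subsets of $\la\w,\tau_0\ra$) together with the remaining strength of $\mathit{MA}$ --- chiefly the equalities $\hot p=\hot b=\hot c$, used for the Fr\'echet demands and for the absence of $(\w,<\hot b)$-gaps of Lemma~\ref{no_w_<b_gaps} --- all come into play. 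The remaining verifications --- that every refinement preserves density of the $E_n$, convergence of all recorded sequences, and zero-dimensionality, and that in the end $\Y^0$, $\Y^1$ witness the Fr\'echet-Urysohn property of $X_0$, $X_1$ --- are routine and parallel the two earlier constructions in this paper.
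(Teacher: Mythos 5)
Your overall architecture (recursion of length $\hot c$, bookkeeping of Fr\'echet demands, weight $<\hot p$ to extract convergent subsequences, anchoring recorded sequences in members of the separated mad family so that later refinements respect them) matches the paper's, but the mechanism you propose for killing $M$-separability has a genuine gap, and it sits exactly where you flagged ``the main obstacle'': the tail dichotomy is not just hard to verify, it is false in general. With $D_n=E_n\times E_n$ the first and second coordinates of the adversary's selections are completely uncoupled, so the adversary can sabotage both coordinates at once: once the construction has recorded (as Fr\'echet-Urysohnness eventually forces it to) a family of ``horizontal'' convergent sequences $Y_x$ in $X_0$, one for each $x\in\w$, each meeting every $E_m$ in at most one point, a later stage of the fixed enumeration will present $G_n$ whose first-coordinate projections satisfy $A_n\supseteq\{Y_x\cap E_n:x\leq n\}$. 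Then for every $N$ the set $\bigcup_{n\geq N}A_n$ has infinite intersection with every $Y_x$, so any $D$ containing it and respecting all recorded sequences must, after the saturation closure, contain every limit point $x$, hence all of $\w$, and $\w\setminus D$ is empty. Doing the same simultaneously with the $B_n$'s and recorded sequences in $X_1$ defeats both horns of the dichotomy, so no choice of coordinate and $N$ produces a nonempty open box. (A minor additional slip: $\w\setminus\bigcup_{n<N}B^\xi_n$ is open dense in a crowded $T_1$ space but not clopen, since the deleted finite set is not open; this is harmless for your purposes but worth noting.)

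The paper circumvents this by changing the shape of the witnessing dense sets: the $E_n$ are taken to be disjoint dense subsets of the \emph{product} whose union $E$ is the graph of a permutation of $\w$, so every point of every $F^\beta_n$ has the form $\la x,E(x)\ra$ and the construction decides \emph{pointwise}, via an $\w$-length back-and-forth fusion building $U_\delta$, $\w\setminus U_\delta$, $V_\delta$, $\w\setminus V_\delta$ simultaneously, which coordinate excludes each such point. What makes those pointwise decisions compatible with preserving all convergences is the invariant (condition $(5)$ there) that every sequence convergent in one factor is mapped by $E$ into a closed discrete subset of the other factor, these images being anchored in members of $\A$ via the injections $\phi^\delta_i,\psi^\delta_i$ with disjoint ranges; the separation clause of $(\ast_{\mathbb Q})$ is then applied to these $<\hot c$-sized disjoint subfamilies to produce the sets $C$, $D$ of Claim~\ref{aux1} that keep the two halves of the fusion apart. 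Your proposal has neither the permutation coupling nor the closed-discreteness invariant, and without them the single-coordinate tail split cannot be made to work; repairing it would essentially amount to rebuilding the paper's construction.
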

\begin{proof}
Let the underlying sets of $X_0,X_1$ be $\w$ and $\tau_0=\sigma_0$ be a topology on $\w$
such that $\la \w,\tau_0\ra$ is homeomorphic to the rationals.
Let  $\A$ be a mad family on $\w$ whose existence is guaranteed by $(\ast_{\mathbb Q})$.

The final topologies $\tau=\tau_{\hot c}$ and $\sigma=\sigma_{\hot
c}$ turning $\w$ into Frechet-Urysohn-spaces with non-$M$-separable
product will be constructed recursively over ordinals $\alpha\in\hot
c$ as increasing unions $\tau_{\hot c}=\bigcup_{\alpha<\hot
c}\tau_\alpha$ and $\sigma_{\hot c}=\bigcup_{\alpha<\hot
c}\sigma_\alpha$ of intermediate zero-dimensional topologies. Let
$\la E_n:n\in\w\ra$ be a sequence of mutually disjoint dense subsets
of $\la \w\times\w,\tau_0\otimes\sigma_0\ra$ such that
$E:=\bigcup_{n\in\w}E_n$ is (the graph of) a permutation of $\w$.
 We shall make sure that $E_n$'s remain dense in $\la \w\times\w,\tau_\alpha\otimes\sigma_\alpha\ra$ for all $\alpha\leq\hot c$.
Let $\{\la S_\alpha,x_{\alpha}\ra:\alpha<\hot c\}$ be an enumeration of $[\w]^\w\times\w$ such that
for each $\la S,x\ra\in [\w]^\w\times\w$ there are cofinally many even as well as odd\footnote{An ordinal is \emph{even}
(resp. \emph{odd})
if it can be written in the form $\w\cdot\alpha+i$ for some even (resp. odd) $i\in\w$. } ordinals
$\alpha$ such that $\la S,x\ra=\la S_\alpha,x_\alpha\ra$.
Fix also an enumeration $\{\la F^\alpha_n:n\in\w\ra:\alpha<\hot c\}$ of $\prod_{n\in\w}[E_n]^{<\w}$.
For the convenience of those readers familiar with the proof of
 \cite[Theorem~2.24]{BarDow11}, for a subset $A$ of $\w$ we denote
$E[A]$ and $E^{-1}[A]$ (here we view $E$ as a map from $\w$ to $\w$)
by $E(A,1)$ and $E(A,0)$, respectively.

Suppose that for some $\alpha<\hot c$ and all  $\delta\in\alpha$ we
have already constructed topologies $\tau_\delta,\sigma_\delta$ on
$\w$, almost disjoint families $\Y_\delta, \Z_\delta\subset
[\w]^\w$,
 such that the following conditions are satisfied:
\begin{itemize}
\item[$(1)$] The weight of $\tau_\delta,\sigma_\delta$ is $<\hot c$;
\item[$(2)$]
\begin{itemize}
\item[$(a)$] $\Y_\beta\subset \Y_\delta$ for all $\beta\leq\delta$
and $\Y_\delta $ consists of sequences convergent in $\la\w,\tau_\delta\ra$;
\item[$(b)$] $\Z_\beta\subset \Z_\delta$ for all $\beta\leq\delta$
and $\Z_\delta $ consists of sequences convergent in $\la\w,\sigma_\delta\ra$;
\end{itemize}
\item[$(3)$]
\begin{itemize}
\item[$(a)$]
For every $\beta<\delta$, if $\beta$ is even and $x_\beta$ is a
limit point of $S_\beta$ in $\la\w,\tau_\delta\ra$, then there
exists $Y\in\Y_\delta $ converging to $x_\beta$ in
$\la\w,\tau_\delta\ra$ with $|Y\cap S_\beta|=\w$;
\item[$(b)$]
For every $\beta<\delta$, if $\beta$ is odd and $x_\beta$ is a limit
point of $S_\beta$  in $\la\w,\sigma_\delta\ra$, then there exists
$Z\in\Z_\delta$ converging to $x_\beta$ in $\la\w,\sigma_\delta\ra$
and such that $|Z\cap S_\beta|=\w$;
\end{itemize}
\item[$(4)$]
\begin{itemize}
\item[$(a)$] There are injective
 maps $\phi^\delta_0,\phi^\delta_1:\Y_\delta\to\A$
such that $Y\subset\phi^\delta_0(Y)$  and $E(Y,1)\subset
\phi^\delta_1(Y)$ for all $Y\in\Y_\delta$;
\item[$(b)$] There are injective maps $\psi^\delta_1,\psi^\delta_0:\Z_\delta\to\A$
such  that $Z\subset\psi^\delta_1(Y)$ and $E(Z,0)\subset
\psi^\delta_0(Z)$ for all $Z\in\Z_\delta$;
\item[$(c)$]
$\psi^\delta_1[\Z_\delta]\cap\phi^\delta_1[\Y_\delta]=\emptyset$ and
$\psi^\delta_0[\Z_\delta]\cap\phi^\delta_0[\Y_\delta]=\emptyset$;
\end{itemize}
\item[$(5)$]
\begin{itemize}
\item[$(a)$]
$E(\phi^\delta_0(Y),1)$ and $\phi^\delta_1(Y)$ are closed and
discrete in $\la\w,\sigma_\delta\ra$ for all  $Y\in\Y_\delta$;
\item[$(b)$]
$E(\psi^\delta_1(Z),0)$ and $\psi^\delta_0(Z)$ are closed and
discrete in $\la\w,\tau_\delta\ra$ for all  $Z\in\Z_\delta$;
\end{itemize}
\item[$(6)$] For every $\beta<\delta$ there are $U_\beta\in\tau_\delta$ and
 $V_\beta\in\sigma_\delta$ such that\footnote{In particular, $U_\beta,V_\beta,I_\beta$ depend only on $\beta$ and not on $\delta$.}
 $$ (U_\beta\times V_\beta)\cap\bigcup_{n\in \w}F^\beta_n=\emptyset. $$
\item[$(7)$]
$E_n$ is dense in $\la \w\times\w,\tau_\delta\otimes\sigma_\delta\ra$
for all $n\in\w$.
\end{itemize}
We shall consider several cases.

\textbf{\textit{I}}. $\alpha$ is limit. \\
It is easily checked that topologies $\tau_\alpha$ and $\sigma_\alpha$
generated by $\bigcup_{\delta<\alpha}\tau_\delta$
and $\bigcup_{\delta<\alpha}\sigma_\delta$ as a base, respectively,
along with $\Y_\alpha=\bigcup_{\delta<\alpha}\Y_\delta$
and $\Z_\alpha=\bigcup_{\delta<\alpha}\Z_\delta$ satisfy
 $(1)$-$(7)$
for  $\delta=\alpha$.

\textbf{\textit{II}}.  $\alpha=\delta+1$, $\delta$ is even,   $x_\delta$ is a
limit point of $S_\delta$ in $\la\w,\tau_\delta\ra$,
and $|Y\cap S_\delta|<\w$ for all
$Y\in\Y_\delta$ converging to $x_\delta$.   \\
 Since the weight of
$\la\w,\tau_\delta\ra$ is $<\hot c=\hot p$, there exists a sequence
$T\in [S_\delta]^\w$ convergent to $x_\delta$. Let $A^\delta_0\in\A$
be such that $|A^\delta_0\cap T|=\w$.

Analogously, since  $|\psi^\delta_0(Z)\cap T|<\w$ for all
$Z\in\Z_\delta$ (because $\psi^\delta_0(Z)$ is closed discrete in
$\la\w,\tau_\delta\ra$), we have  $\psi^\delta_0(Z)\neq A^\delta_0$
for all $Z\in\Z_\delta$. For the same reason
$|E(\psi^\delta_1(Z),0)\cap T|<\w$ for all  $Z\in\Z_\delta$. Since
$E$ is a permutation of $\w$, $\A_{1\downarrow
0}:=\{E(A,0):A\in\A\}$ is a mad family, and hence there exists
$A^\delta_1\in \A$ such that $|E(A^\delta_1,0)\cap A^\delta_0\cap
T|=\w$. It follows from the above that $E(A^\delta_1,0)\neq
E(\psi^\delta_1(Z),0)$ for all $Z\in\Z_\delta$, and hence also
$A^\delta_1 \neq \psi^\delta_1(Z)$.

Finally, set $Y_\delta=T\cap A^\delta_0\cap E(A^\delta_1,0)$,
$\Y_\alpha:=\Y_\delta\cup\{Y_\delta\}$, $\Z_\alpha=\Z_\delta$,
$\phi^\alpha_0(Y_\delta)=A^\delta_0$,
$\phi^\alpha_1(Y_\delta)=A^\delta_1$,
$\phi^\alpha_0\uhr\Y_\delta=\phi^\delta_0$, and
$\phi^\alpha_1\uhr\Y_\delta=\phi^\delta_1$. It follows from the
construction that item $(4)$ is satisfied. To proceed further we
need the following direct consequence of
\cite[Lemma~2.23]{BarDow11}.
\begin{claim}\label{bardow} 
Suppose that $X$ is a countable crowded space of weight less than $\hot p$,
$\mathcal T \subset \mathcal P(X)$ is a family of almost disjoint
converging sequences in $X$, $|\mathcal T| < \hot p$, and $\mathcal R\subset\mathcal P
 (X)$ is a countable family of subsets of $X$ such that each $R\in\mathcal R$ has dense complement
 and is almost disjoint from each member of
$\mathcal T$, with all but possibly one elements of $\mathcal R$ being discrete.
 Then there is an expansion of the topology of $X$ to a larger crowded one, obtained by
adding countably many sets along with their complements, in which each $R\in\mathcal R$ is a
closed nowhere dense set, and each member of $\mathcal T$ is again a
converging sequence.
\end{claim}
For every $\la x,y\ra\in \w^2$  and $n\in\w$ find an injective
sequence
$$\big\la
\la s^{\la x,y\ra,n}_i,t^{\la x,y\ra,n}_i\ra:i\in\w\big\ra\in \big(E_n\cap \big((\w\setminus A^\delta_0)\times
(\w\setminus A^\delta_1)\big)^\w$$
 convergent to $\la x,y\ra$. This is possible because
 all elements of $\A$ are nowhere  dense in $\tau_0=\sigma_0$, and hence
 also in all further
 topologies we construct.
 It follows that $T^{x,y}:=\{t^{\la x,y\ra,n}_i:i\in\w\}$ is almost disjoint from
 each  $Z\in\Z_\alpha$. Indeed, $E(T^{\la x,y\ra,n},0)=\{s^{\la x,y\ra,n}_i:i\in\w\}$
  is a convergent sequence in $\la\w,\tau_\delta\ra$,
 whereas $E(Z,0)$ is closed discrete  in this space for all $Z\in\Z_\delta$. Also,
 $E(T^{\la x,y\ra,n},0)\cap A^\delta_0 =\emptyset$ by the choice of $T^{\la x,y\ra,n}$,
 and hence $T^{\la x,y\ra,n}\cap E(A^\delta_0,1) =\emptyset$ for all $x,y$ as above.
 In particular, since $y\in\w$ is arbitrary, it shows that $\w\setminus E(A^\delta_0,1)$
 is dense\footnote{This is of course easy and follows directly from the fact that
 $A^\delta_0$ is nowhere dense in $\la\w,\tau_\delta\ra$.} in $\la\w,\sigma_\delta\ra$.
  Also, $T^{\la x,y\ra,n}$ is almost disjoint from $A^\delta_1$ for all $x,y$ by
construction. Applying Lemma~\ref{bardow}  to $X=\la
\w,\sigma_\delta\ra$, $\mathcal
T=\Z_\delta\cup\{T^{\la x,y\ra,n}:x,y,n\in\w\},$ $\mathcal R=\{
E(A^\delta_0,1),A^{\delta}_1\}$, we get a $0$-dimensional topology
$\sigma^{-}_\alpha\supset\sigma_\delta$ on $\w$ in which  elements of
$\mathcal T$ are converging, and those of $\mathcal R$ are closed
discrete. By the choice of $\mathcal T$ we have that $E_n$ is dense
in $\la\w\times\w,\tau_\delta\otimes\sigma^{-}_\alpha\ra$ for all $n$,
because there is a sequence in $E_n$ converging to each $\la
x,y\ra$. Thus condition $(5)$ will be satisfied because the topology
$\sigma_\alpha$ we plan to construct is going to be crowded and
stronger than $\sigma^-_\alpha$. For convenience we set
$\tau^-_\alpha=\tau_\delta$.

Next, we pass to the construction of $U_\delta,V_\delta$ satisfying
$(6)$. This condition requires that if $x\in U_\delta$ and
$Y\in\Y_\delta$ converges to $x$, then $Y\subset^* U_\delta$. To
achieve this we shall use the following auxiliary

\begin{claim} \label{aux1}
There are families $\{C_n:n\in\w\}$ and $\{D_n:n\in\w\}$
 such that
\begin{itemize}
\item[$(8)$] $Y\subset^*C_n$ for any $Y\in\Y_\alpha$ converging to $n$ in
$\la\w,\tau^{-}_\alpha\ra$;
\item[$(9)$] $Z\subset^*D_n$ for any $Z\in\Z_\alpha$ converging to $n$ in
$\la\w,\sigma^{-}_\alpha\ra$;
\item[$(10)$] $C_n\cap C_m=\emptyset$ and $D_n\cap D_m=\emptyset$ for all $n\neq
m$;
\item[$(11)$] $E(C_n,1)\cap D_m=\emptyset$
(or equivalently, $E(D_m,0)\cap C_n=\emptyset$)
 for any $n,m\in\w$.
\end{itemize}
\end{claim}
\begin{proof}

For every $n\in\w$ let us denote by $\Y_{\alpha,n}$ the family
of all $Y\in\Y_\alpha$ converging to $n$, and fix a family $\{O^n_k:k\in\w\setminus\{n\}\}\subset\tau_0$
such that $O^n_k$ is a clopen neighbourhood of $k$ not containing $n$. Then $|Y\cap O^n_k|<\w$
for all $Y\in\Y_{\alpha,n}$ and $k\neq n$, and hence Lemma~\ref{no_w_<b_gaps} implies that there exists $C^0_n\subset\w$
such that $Y\subset^*C^0_n$ and $|C^0_n\cap O^n_k|<\w$ for all  $Y,k$ as above.
Let us note that $|C^0_n\cap Y|<\w$ for all $Y\in\Y_\alpha\setminus \Y_{\alpha,n}$
because $Y\subset^* O^n_k$ for $k $ being the limit point of $Y$.
Thus letting $C^1_0=C^0_0$ and $C^1_n=C^0_n\setminus\bigcup_{n'<n}C^0_{n'}$
we get that $\{C^1_n:n\in\w\}$ is a disjoint family such that $Y\subset^* C^1_n$
for all $Y\in\Y_{\alpha,n}$. Similarly we can get a disjoint family $\{D^1_n:n\in\w\}$
   such that $Z\subset^* D^1_n$
for all $Z\in\Z_{\alpha,n}$, where $\Z_{\alpha,n}=\{Z\in\Z_\alpha:Z$
converges to $n\}$. Finally, by item $(4)$ and the property of $\A$
stated in $(\ast_{\mathbb Q})$ we can find $C,D\subset\w$ such that
$\phi^\alpha_0(Y)\subset^*C$ and $\phi^\alpha_1(Y)\subset^*D$ for
all $Y\in\Y_\alpha$, and $|\psi^\alpha_1(Z)\cap D|<\w$ and
 $|\psi^\alpha_0(Z)\cap C|<\w$
for all  $Z\in\Z_\alpha$.

Set $C_n=C^1_n\cap C\cap E(D,0)$ and $D_n=D^1_n\cap (\w\setminus D)\cap(\w\setminus E(C,1))$.
We claim that the families $\{C_n:n\in\w\}$
 and $\{D_n:n\in\w\}$ are as required.
 Indeed, for $(8)$ let us note that for all $Y\in\Y_{\alpha,n}$ we have
 $Y\subset^*C^1_n$,  $Y\subset \phi^\delta_0(Y)\subset^*C$,
 and $E(Y,1)\subset\phi^\alpha_1(Y)\subset^* D$, the latter implying $Y\subset^* E(D,0)$,
 and hence $Y\subset^* C_n$. $(9)$ is analogous:
 for all $Z\in\Z_{\alpha,n}$ we have
 $Z\subset^*D^1_n$,  $Z\subset \psi^\delta_1(Z)\subset^*\w\setminus D$,
 and $E(Z,0)\subset\psi^\alpha_0(Z)\subset^* \w\setminus C$, the
 latter implying $Z\subset^* \w\setminus E(C,1)$,
 and hence $Z\subset^* D_n$. Condition $(10)$ follows from
 $C_n\subset C^1_n$ and $D_n\subset D^1_n$.
 And finally,
 $$ E(C_n,1)\cap D_m\subset E\big(E(D,0),1\big)\cap (\w\setminus D)=D\cap (\w\setminus D)=\emptyset $$
 for all $n,m\in\w$, which yields $(11).$
\end{proof}

For every $\la x,y\ra\in\w^2$ and $n\in\w$ we
 fix an injective sequence
 $$\big\la \la s^{\la x,y\ra,n}_k,t^{\la x,y\ra,n}_k\ra:k\in\w \big\ra \in
(E_n\setminus F^\delta_n)^\w$$ converging to $\la x,y\ra$ in
$\la\w\times\w,\tau^-_\alpha\otimes\sigma^{-}_\alpha\ra$. We shall in
addition assume that these sequences are mutually disjoint, i.e.,
$ s^{\la x,y\ra,n}_k= s^{\la x',y'\ra,n'}_{k'}$ (resp. $ t^{\la x,y\ra,n}_k= t^{\la x',y'\ra,n'}_{k'}$) iff
$x=x'$, $y=y'$, $n=n'$, and $k=k'$.
Set $S^{\la x,y\ra,n}=\{s^{\la x,y\ra,n}_k:k\in\w\}$ and $T^{\la x,y\ra,n}=\{t^{\la x,y\ra,n}_k:k\in\w\}$ for all $\la x,y\ra$ and $n$ as above.

Replacing the $D_m$'s and $C_m$'s with smaller sets, if necessary, we may additionally assume that
$$|S^{\la x,y\ra,n}\cap C_m|<\w \mbox{ \ and\ \ } |T^{\la x,y\ra,n}\cap D_m|<\w$$
 for all $\la x,y\ra$, $n$, and $m$.
Indeed,
since $E(Y,1)$ is closed and discrete in $\la\w,\sigma^{-}_{\alpha}\ra$ for every $Y\in \Y_\alpha$
and $E(S^{\la x,y\ra,n},1)=T^{\la x,y\ra,n}$ is a convergent sequence in this topology,
we conclude that $|S^{\la x,y\ra,n} \cap Y|<\w$ for all $\la x,y\ra,n$ as above.
Since there are no $(\w,<\hot b)$ gaps, there exists $S\subset\w$ such that
$S^{\la x,y\ra,n}\subset^*S$ and $|Y\cap S|<\w$ for all $\la x,y\ra,n$. Thus replacing
$C_m$ with $C_m\setminus S$ for all $m\in\w$, if necessary, we may  assume that
$|S^{\la x,y\ra,n}\cap C_m|<\w$ for all $\la x,y\ra,n$, in addition to all properties
of $C_m$ stated above. Analogously with  $|T^{\la x,y\ra,n}\cap D_m|<\w$ for all $\la x,y\ra,n$.

Finally, we pass to the construction of $U_\delta$ and $V_
\delta$. This will be done
recursively over $k\in\w$, namely they will be constructed  as  increasing unions
$\bigcup_{k\in\w}U^{\delta,1}_k$  and
$\bigcup_{k\in\w}V^{\delta,1}_k$, respectively. Moreover, we shall make sure that
adding their complements to the corresponding topologies can also be
done without any harm, by constructing these as  increasing unions
$\bigcup_{k\in\w}U^{\delta,0}_k$ and
$\bigcup_{k\in\w}V^{\delta,0}_k$, respectively. We  construct these objects
along with
\begin{itemize}
\item non-decreasing sequences
$\la H^j_k:k\in\w\ra$ and $\la G^j_k:k\in\w\ra$ of finite subsets of $\w$, where $j\in 2$; and
\item for each $x\in\bigcup_{k\in\w}H^1_k\cup\bigcup_{k\in\w}H^0_k$,
$y\in\bigcup_{k\in\w}G^1_k\cup\bigcup_{k\in\w}G^0_k$ and $n\in\w$
cofinal subsets $\tilde{C}_x$, $\tilde{D}_y$,
 $\tilde{S}^{\la x,y\ra,n}$ and $\tilde{T}^{\la
x,y\ra,n}$ of $C_x$, $D_y$, $S^{\la x,y\ra,n}$ and $T^{\la
x,y\ra,n}$, respectively,
\end{itemize}
such that $H^1_0=\{x^1_0\}$ and $G^1_0=\{y^1_0\}$ for some $\la
x^1_0,y^1_0\ra\not\in\bigcup_{n\in\w}F^\delta_n$,
$H^0_0=G^0_0=\emptyset$, and
 the
following conditions are satisfied for all $k\in\w$ and $i\in 2$:
\begin{itemize}
\item[$(i)$] $H^1_k\cap H^0_k=G^1_k\cap G^0_k=\emptyset$;
\item[$(ii)$] $U^{\delta,i}_k=\big[H^i_k\cup\bigcup\{\tilde{C}_x:x\in H^i_k\}\cup \\
\cup \bigcup\{\tilde{S}^{\la x,y\ra,n}:n\leq k,
\la x,y\ra\in H^i_k\times(G^1_k\cup G^0_k)\} \big]\cup\Delta^i_k$,\\
$V^{\delta,i}_k=\big[G^i_k\cup\bigcup\{\tilde{D}_y:y\in G^i_k\}\cup\\
\cup\bigcup\{\tilde{T}^{\la x,y\ra,n}:n\leq k, \la x,y\ra\in
(H^1_k\cup H^0_k)\times G^i_k \} \big]\cup\Sigma^i_k$;
\item[$(iii)$] $\Delta^1_k=\emptyset$ and $\Delta^0_k=\{k\}$ if
$k\not\in U^{\delta,1}_k$, otherwise $\Delta^0_k=\emptyset$;
$\Sigma^1_k=\emptyset$ and $\Sigma^0_k=\{k\}$ if $k\not\in
V^{\delta,1}_k$, otherwise $\Sigma^0_k=\emptyset$;
\item[$(iv)$] $U^{\delta,1}_k\cap U^{\delta,0}_k=V^{\delta,1}_k\cap
V^{\delta,0}_k=\emptyset$;
\item[$(v)$] If $\la x,y\ra\in
(H^1_k\cup H^0_k)\times (G^1_k\cup G^0_k)$, then $\tilde{T}^{\la
x,y\ra,n}= E(\tilde{S}^{\la x,y\ra,n},1)$ for all
 $n\leq k$;
\item[$(vi)$] $(U^{\delta,1}_k\times
 V^{\delta,1}_k)\cap\bigcup_{n\in\w}F^\delta_n=\emptyset$;
\item[$(vii)$] $H^i_{k+1}=H^i_k\cup\{\min(U^{\delta,i}_k\setminus H^i_k)\}$
provided that $H^i_k\neq\emptyset$, and $H^i_{k+1}=H^i_k=\emptyset $
otherwise;  $G^i_{k+1}=G^i_k\cup\{\min(V^{\delta,i}_k\setminus
G^i_k)\} $ provided that $G^i_k\neq\emptyset$, and
$G^i_{k+1}=G^i_k=\emptyset $ otherwise.
\end{itemize}
To start off the inductive construction, set
$\tilde{C}_{x^1_0}=C_{x^1_0}\setminus E(\{y^1_0\},0),$
$\tilde{D}_{y^1_0}=D_{y^1_0}\setminus E(\{x^1_0\},1),$
 $$\hat{T}^{\la
x^1_0,y^1_0\ra,0}=T^{\la x^1_0,y^1_0\ra,0}\setminus \big(
E(C_{x^1_0},1)\cup E(\{x^1_0\},1)\big),$$
$$\hat{S}^{\la
x^1_0,y^1_0\ra,0}=S^{\la x^1_0,y^1_0\ra,0}\setminus\big(
E(D_{y^1_0},0)\cup E(\{y^1_0\},0)\big)$$ and then
$$\tilde{T}^{\la x^1_0,y^1_0\ra,0}=\hat{T}^{\la
x^1_0,y^1_0\ra,0}\cap E(\hat{S}^{\la x^1_0,y^1_0\ra,0},1)$$ and
$$\tilde{S}^{\la x^1_0,y^1_0\ra,0}=\hat{S}^{\la
x^1_0,y^1_0\ra,0}\cap E(\hat{T}^{\la x^1_0,y^1_0\ra,0},0).$$ The
equation
\begin{eqnarray*}
E (\tilde{S}^{\la x^1_0,y^1_0\ra,0},1)=E\big(\hat{S}^{\la
x^1_0,y^1_0\ra,0}\cap E(\hat{T}^{\la x^1_0,y^1_0\ra,0},0),1\big)=\\
E(\hat{S}^{\la x^1_0,y^1_0\ra,0},1) \cap E\big(E(\hat{T}^{\la
x^1_0,y^1_0\ra,0},0),1\big)=\\
=E(\hat{S}^{\la x^1_0,y^1_0\ra,0},1)\cap \hat{T}^{\la
x^1_0,y^1_0\ra,0}=\tilde{T}^{\la x^1_0,y^1_0\ra,0}
\end{eqnarray*}
proves $(v)$ for $k=0$. To check  $(vi)$ note that
\begin{eqnarray*}
U^{0,1}_0\times
V^{0,1}_0=\big(\{x^1_0\}\cup\tilde{C}_{x^1_0}\cup\tilde{S}^{\la
x^1_0,y^1_0\ra,0}\big)\times
\big(\{y^1_0\}\cup\tilde{D}_{y^1_0}\cup\tilde{T}^{\la
x^1_0,y^1_0\ra,0}\big)=\\
\{\la x^1_0,y^1_0\ra\}\cup(\{x^1_0\}\times \tilde{D}_{y^1_0})\cup
(\{x^1_0\}\times\tilde{T}^{\la x^1_0,y^1_0\ra,0}) \cup\\
\cup
(\tilde{C}_{x^1_0}\times\{y^1_0\})\cup(\tilde{C}_{x^1_0}\times\tilde{D}_{y^1_0})\cup(\tilde{C}_{x^1_0}\times\tilde{T}^{\la
x^1_0,y^1_0\ra,0})\cup\\
\cup (\tilde{S}^{\la
x^1_0,y^1_0\ra,0}\times\{y^1_0\})\cup(\tilde{S}^{\la
x^1_0,y^1_0\ra,0}\times\tilde{D}_{y^1_0})\cup(\tilde{S}^{\la
x^1_0,y^1_0\ra,0}\times\tilde{T}^{\la x^1_0,y^1_0\ra,0}),
\end{eqnarray*}
i.e., $U^{0,1}_0\times V^{0,1}_0$  is a union of 9 sets, of which 7
ones ``in the middle'' have empty intersection with
$\bigcup_{n\in\w}E_n$ because $A\times
B\cap\bigcup_{n\in\w}E_n=\emptyset$ provided that $A\cap
E(B,0)=\emptyset$, which is the case for these 7 products by the
choice or definition of the corresponding sets. Also, $\la
x^1_0,y^1_0\ra\not\in\bigcup_{n\in\w}F^\delta_n$ by the choice,
whereas
$$ \tilde{S}^{\la
x^1_0,y^1_0\ra,0}\times\tilde{T}^{\la x^1_0,y^1_0\ra,0}\cap
\bigcup_{n\in\w}E_n=\{\la s^{\la x^1_0,y^1_0\ra,0}_k, t^{\la
x^1_0,y^1_0\ra,0}_k\ra:k\in\w\}\subset E_0\setminus F^\delta_0,
$$
and hence $\tilde{S}^{\la x^1_0,y^1_0\ra,0}\times\tilde{T}^{\la
x^1_0,y^1_0\ra,0} $ also has empty intersection with
$\bigcup_{n\in\w}F^\delta_n$. This
  completes the case $k=0$ since all
other items are easily checked.

 Suppose now that items $(i)-(vii)$
hold for $k$ and note that $(vii)$ gives the unique way to define
$H^{i}_{k+1}$ and $G^i_{k+1}$ for $i\in 2$, and $(iv)$ for $k$
yields $(i)$ for $k+1$. Given $i\in 2$ and $x\in H^i_{k+1}\setminus
H^i_k$ and $y\in G^i_{k+1}\setminus G^i_k$, set
\begin{eqnarray*}
\tilde{C}_x=C_x\setminus \Big( E(G^1_{k+1},0)\cup
\bigcup\big\{E(T^{\la x',y'\ra,n'}, 0)\::\\
 \la x',y'\ra\in
(H^1_{k+1}\cup H^0_{k+1})\times (G^1_{k+1}\cup G^0_{k+1}), n'\leq
k\big\}\cup H^0_{k+1}\Big)=\\
= C_x\setminus \Big( E(G^1_{k+1},0)\cup\bigcup\big\{S^{\la
x',y'\ra,n'}\::\\
 \la x',y'\ra\in (H^1_{k+1}\cup H^0_{k+1})\times
(G^1_{k+1}\cup G^0_{k+1}), n'\leq k\big\}\cup H^0_{k+1}\Big)
\end{eqnarray*}
and
\begin{eqnarray*}
\tilde{D}_y=D_y\setminus\Big( E(H^1_{k+1},1)
\cup\bigcup\big\{E(S^{\la x',y'\ra,n'}, 1)\:: \\ \la x',y'\ra\in
(H^1_{k+1}\cup H^0_{k+1})\times (G^1_{k+1}\cup G^0_{k+1}), n'\leq
k\big\}\cup G^0_{k+1}\Big)=\\
= D_y\setminus\Big( E(H^1_{k+1},1)\cup\bigcup\big\{T^{\la
x',y'\ra,n'}\:: \\
 \la x',y'\ra\in (H^1_{k+1}\cup H^0_{k+1})\times
(G^1_{k+1}\cup G^0_{k+1}), n'\leq k\big\}\cup G^0_{k+1} \Big).
\end{eqnarray*}
For every
\begin{eqnarray*}
\la\la x',y'\ra,n'\ra\in \big((H^1_{k+1}\cup H^0_{k+1})\times
(G^1_{k+1}\cup G^0_{k+1})\times (k+2)\big)\setminus \\
\setminus \big( (H^1_k\cup H^0_k)\times (G^1_k\cup G^0_k) \times
(k+1)\big)
\end{eqnarray*}
 we let
\begin{eqnarray*}
\hat{T}^{\la x',y'\ra,n'}=T^{\la x',y'\ra,n'}\setminus \big[
E\big(\bigcup\big\{
C_x:x\in H^{1}_{k+1} \big\}\cup H^1_{k+1}, 1\big)\cup G^0_{k+1}\cup\\
\cup\bigcup\big\{D_y:y\in G^1_{k+1}\cup G^0_{k+1}\big\}\big],\\
\hat{S}^{\la x',y'\ra,n'}=S^{\la x',y'\ra,n'}\setminus
\big[E\big(\bigcup\big\{D_y:y\in G^{1}_{k+1} \big\}\cup G^1_{k+1},
0\big) \cup H^0_{k+1} \cup\\
\cup\bigcup\big\{C_x:x\in H^1_{k+1}\cup H^0_{k+1}\big\}\big],
\end{eqnarray*}
and finally
\begin{eqnarray*}
\tilde{T}^{\la x',y'\ra,n'}:=\hat{T}^{\la x',y'\ra,n'}\cap
E(\hat{S}^{\la x',y'\ra,n'},1),\\
\tilde{S}^{\la x',y'\ra,n'}:=\hat{S}^{\la x',y'\ra,n'}\cap
E(\hat{T}^{\la x',y'\ra,n'},0).
\end{eqnarray*}
The equality
\begin{eqnarray*}
E (\tilde{S}^{\la x',y'\ra,n'},1)=E\big(\hat{S}^{\la x',y'\ra,n'}\cap E(\hat{T}^{\la x',y'\ra,n'},0),1\big)=\\
E(\hat{S}^{\la x',y'\ra,n'},1) \cap E\big(E(\hat{T}^{\la x',y'\ra,n'},0),1\big)=\\
=E(\hat{S}^{\la x',y'\ra,n'},1)\cap \hat{T}^{\la
x',y'\ra,n'}=\tilde{T}^{\la x',y'\ra,n'}
\end{eqnarray*}
for all $\la \la x',y'\ra,n'\ra$ as above  yields $(v)$ for $k+1$.
Let $U^{\delta,i}_{k+1}$ and $V^{\delta,i}_{k+1}$ be defined
according to $(ii)$ and $(iii)$ for $k+1$. To check $(vi)$ for
$(k+1)$,  let us consider the product\\
\[
\begin{array}{ll}
   &  U^{\delta,1}_{k+1}\times V^{\delta,1}_{k+1}=\\
(\alpha) &    = \big(H^1_{k+1}\times G^1_{k+1}\big)\cup
\big(H^1_{k+1}\times\bigcup\{\tilde{D}_y:y\in
G^1_{k+1}\}\big)\bigcup
\\
(\beta) & \bigcup\big(H^1_{k+1}\times\bigcup\{\tilde{T}^{\la
x,y\ra,n}:n\leq k+1,
\la x,y\ra\in \\
\ & \in (H^1_{k+1}\cup H^0_{k+1})\times G^1_{k+1} \}\big)\bigcup \\
 (\gamma) &  \bigcup  \big(\bigcup\{\tilde{C}_x:x\in H^1_{k+1}\}\times
G^1_{k+1}\big)\bigcup\\
(\zeta) & \bigcup
\big(\bigcup\{\tilde{C}_x:x\in H^1_{k+1}\}\times\bigcup\{\tilde{D}_y:y\in G^1_{k+1}\}\big)\bigcup\\
(\varepsilon) & \bigcup\big(\bigcup\{\tilde{C}_x:x\in
H^1_{k+1}\}\times\\
 \ & \times\bigcup\{\tilde{T}^{\la x,y\ra,n}:n\leq k+1,
\la x,y\ra\in (H^1_{k+1}\cup H^0_{k+1})\times G^1_{k+1} \}\big)\bigcup \\
 (\eta) & \bigcup\big(\bigcup\{\tilde{S}^{\la x,y\ra,n}:n\leq k+1, \la x,y\ra\in \\
 \ & \in H^1_{k+1}\times(G^1_{k+1}\cup G^0_{k+1})\}\times G^1_{k+1}\big)\bigcup\\
(\theta) & \bigcup \big(\bigcup\{\tilde{S}^{\la x,y\ra,n}:n\leq k+1,
\la x,y\ra\in H^1_{k+1}\times(G^1_{k+1}\cup G^0_{k+1})\}\times\\
\ & \times\bigcup\{\tilde{D}_y:y\in G^1_{k+1}\}\big)\bigcup \\
(\lambda) & \bigcup \big(\bigcup\{\tilde{S}^{\la x,y\ra,n}:n\leq
k+1, \la x,y\ra\in
H^1_{k+1}\times(G^1_{k+1}\cup G^0_{k+1})\}\times  \\
\ & \times\bigcup\{\tilde{T}^{\la x,y\ra,n}:n\leq k+1, \la x,y\ra\in
(H^1_{k+1}\cup H^0_{k+1})\times G^1_{k+1} \}\big).
\end{array}
\]
\\
To show that this product is disjoint from
$F=\bigcup_{n\in\w}F^\delta_n$ we shall analyze sets appearing in
the above long formula one by one. $H^{1}_{k+1}\times G^1_{k+1}$ is
disjoint from $F$ by our inductive assumption, namely $(vi)$ for
$k$, since $H^1_{k+1}\subset U^{\delta,1}_k$ and $G^1_{k+1}\subset
V^{\delta,1}_k$. The second product in $(\alpha)$ is disjoint  from
$F$  because we have explicitly made $E(H^1_{k+1},1)$ disjoint from
$\tilde{D}_y$ if $y\in G^1_{k+1}\setminus G^1_k$, and hence
$H^1_{k+1}\times\tilde{D}_y$ is disjoint even from $E$ in this case.
And if $y\in G^1_k$ then we can use $(vi)$ for $k$ since
$H^1_{k+1}\subset U^{\delta,1}_k$ and $\tilde{D}_y\subset
V^{\delta,1}_k$. Analogously we can show that also the product in
item $(\gamma)$ is disjoint from $F$.

The  product in $(\beta)$ is disjoint  from $F$ because we have
explicitly made $E(H^1_{k+1},1)$ disjoint from $\tilde{T}^{\la
x,y\ra,n}$ if $\la\la x,y\ra,n\ra$ lies in the difference $N:=$
$$ \big(((H^1_{k+1}\cup H^0_{k+1})\times G^1_{k+1})\times
(k+2)\big)\setminus \big(((H^1_{k}\cup H^0_{k})\times G^1_{k})\times
(k+1)\big),$$ and if
$$\la\la x,y\ra,n\ra\in((H^1_{k}\cup H^0_{k})\times G^1_{k})\times
(k+1)$$
 then we can use $(vi)$ for $k$ since
$H^1_{k+1}\subset U^{\delta,1}_k$ and $\tilde{T}^{\la
x,y\ra,n}\subset V^{\delta,1}_k$. Analogously we can show that also
the product in item $(\eta)$ is disjoint from $F$.

The product in $(\zeta)$ is  disjoint even from $E$ since
$E(C_x,1)\cap D_y=\emptyset$ for any $x,y\in\w$ by
Claim~\ref{aux1}$(11)$.

The  product in item $(\varepsilon)$ is disjoint  from $F$ because
we have explicitly made $E(\tilde{C}_x,1)$ disjoint from
$\tilde{T}^{\la x',y'\ra,n}$ if $x\in H^1_{k+1}\setminus H^1_k$ or
$\la\la x',y'\ra,n\ra\in N$,  and if both $x\in H^1_k$ and
$$\la\la x',y'\ra,n\ra\in(H^1_{k}\cup H^0_{k})\times (G^1_{k}\cup G^0_k)\times
(k+1)$$
 then we can use $(vi)$ for $k$ since
$\tilde{C}_x\subset U^{\delta,1}_k$ and $\tilde{T}^{\la
x',y'\ra,n}\subset V^{\delta,1}_k$. Analogously we can show that
also the product in item $(\theta)$ is disjoint from $F$.

Finally, consider the product in $(\lambda)$. Given  $\la\la
x,y\ra,n\ra$ and $\la\la x',y'\ra,n'\ra$ in
$$ ((H^1_{k+1}\cup H^0_{k+1})\times G^1_{k+1})\times
(k+2), $$ two cases are possible. If $\la\la x,y\ra,n\ra =\la\la
x',y'\ra,n'\ra$, then $\tilde{T}^{\la x,y\ra,n}=E(\tilde{S}^{\la
x,y\ra,n},1)$, and hence
\begin{eqnarray*}
(\tilde{S}^{\la x,y\ra,n}\times\tilde{T}^{\la x,y\ra,n})\bigcap
E\subset\{\la s^{\la x,y\ra,n}_k, t^{\la
x,y\ra,n}_k\ra:k\in\w\}\subset E_n\setminus F^\delta_n,
\end{eqnarray*}
i.e., $\tilde{S}^{\la x,y\ra,n}\times\tilde{T}^{\la x,y\ra,n}$ is
disjoint from $F$. If $\la\la x,y\ra,n\ra \neq \la\la
x',y'\ra,n'\ra$, then
$$E(\tilde{S}^{\la x,y\ra,n},1)= \tilde{T}^{\la
x,y\ra,n}\subset T^{\la x,y\ra,n},$$ and hence
$$E(\tilde{S}^{\la
x,y\ra,n},1)\cap \tilde{T}^{\la x',y'\ra,n'}\subset T^{\la
x,y\ra,n}\cap T^{\la x',y'\ra,n'}=\emptyset,$$ which implies that
$(\tilde{S}^{\la x,y\ra,n}\cap \tilde{T}^{\la x',y'\ra,n'})\cap
E=\emptyset$. This completes our proof of $(vi)$ for $(k+1)$.

Finally, it remains to check $(iv)$ for $k+1$. We can write
$U^{\delta,1}_{k+1}\cap U^{\delta,0}_{k+1}$ as follows:
\[
\begin{array}{ll}
   &  U^{\delta,1}_{k+1}\cap U^{\delta,0}_{k+1}=\\
(\alpha_\cap) &    = \big(H^1_{k+1}\cap H^0_{k+1}\big)\bigcup
\big(H^1_{k+1}\cap\bigcup\{\tilde{C}_x:x\in H^0_{k+1}\}\big)\bigcup
\\
(\beta_\cap) & \bigcup\big(H^1_{k+1}\cap\bigcup\{\tilde{S}^{\la
x,y\ra,n}:n\leq k+1,
\la x,y\ra\in  \\
\ & \in H^0_{k+1}\times (G^1_{k+1}\cup G^0_{k+1}) \}\big)\bigcup \\
 (\gamma_\cap) &  \bigcup  \big(\bigcup\{\tilde{C}_x:x\in H^1_{k+1}\}\cap
H^0_{k+1}\big)\bigcup\\
(\zeta_\cap) & \bigcup
\big(\bigcup\{\tilde{C}_x:x\in H^1_{k+1}\}\cap\bigcup\{\tilde{C}_x:x\in H^0_{k+1}\}\big)\bigcup\\
(\varepsilon_\cap) & \bigcup\big(\bigcup\{\tilde{C}_x:x\in
H^1_{k+1}\}\cap\\
 \ & \cap\bigcup\{\tilde{S}^{\la x,y\ra,n}:n\leq k+1,
\la x,y\ra\in  H^0_{k+1}\times (G^1_{k+1}\cup G^0_{k+1}) \}\big)\bigcup \\
 (\eta_\cap) & \bigcup\big(\bigcup\{\tilde{S}^{\la x,y\ra,n}:n\leq k+1, \la x,y\ra\in \\
 \ & \in H^1_{k+1}\times(G^1_{k+1}\cup G^0_{k+1})\}\cap H^0_{k+1}\big)\bigcup\\
(\theta_\cap) & \bigcup \big(\bigcup\{\tilde{S}^{\la x,y\ra,n}:n\leq
k+1,
\la x,y\ra\in H^1_{k+1}\times(G^1_{k+1}\cup G^0_{k+1})\}\cap\\
\ & \cap\bigcup\{\tilde{C}_x:x\in H^0_{k+1}\}\big)\bigcup \\
(\lambda_\cap) & \bigcup \big(\bigcup\{\tilde{S}^{\la
x,y\ra,n}:n\leq k+1, \la x,y\ra\in
H^1_{k+1}\times(G^1_{k+1}\cup G^0_{k+1})\}\cap  \\
\ & \cap\bigcup\{\tilde{S}^{\la x,y\ra,n}:n\leq k+1, \la x,y\ra\in
H^0_{k+1}\times (G^1_{k+1}\cup G^0_{k+1}) \}\big).
\end{array}
\]
Similarly (but easier) to the proof of $(vi) $ we can check that all
the intersections in the formula displayed above are empty. For
instance, let us consider the intersection in item $(\theta_\cap)$.
Fix  any
$$\la\la x,y\ra,n\ra \in \big(H^1_{k+1}\times(G^1_{k+1}\cup G^0_{k+1})\big)\times (k+2) $$
and $x'\in H^0_{k+1}$. If $x'\not\in H^0_k$, then we have explicitly
subtracted $S^{\la x,y\ra,n}$ from $C_{x'}$ in the definition of
$\tilde{C}_{x'}$, so $\tilde{C}_{x'}\cap\tilde{S}^{\la
x,y\ra,n}=\emptyset$. If $\la\la x,y\ra,n\ra\in N$ (see the proof of
$(vi)$ for the definition thereof), then we have explicitly
subtracted  $C_{x'}$ from $S^{\la x,y\ra,n}$ in the definition of
$\tilde{S}^{\la x,y\ra,n}$, so again
$\tilde{C}_{x'}\cap\tilde{S}^{\la x,y\ra,n}=\emptyset$. Otherwise
$\tilde{S}^{\la x,y\ra,n}\subset U^{\delta,1}_k$ and
$\tilde{C}_{x'}\subset U^{\delta,0}_k$ and therefore we can use
$(iv)$ for $k$ in order to get $\tilde{C}_{x'}\cap\tilde{S}^{\la
x,y\ra,n}=\emptyset$.

 Thus the inductive construction of all the objects
needed for defining $U_\delta$ and $V_\delta$, so that conditions
$(i)-(vii)$ are satisfied, is finished.

As planned, we set  $U_\delta=\bigcup_{k\in\w} U^{\delta,1}_k$,
$V_\delta=\bigcup_{k\in\w} V^{\delta,1}_k$, and note that $\w\setminus U_\delta=\bigcup_{k\in\w} U^{\delta,0}_k$
and $\w\setminus V_\delta=\bigcup_{k\in\w} V^{\delta,0}_k$
by $(iii)$ and $(iv)$. Let $\tau_\alpha$ and $\sigma_\alpha$ be the topologies
generated by $\tau^{-}_\alpha\cup\{U_\delta,\w\setminus U_\delta\}$ and $\sigma^{-}_\alpha\cup\{V_\delta,\w\setminus V_\delta\}$
as a base, respectively. This way we get  $0$-dimensional topologies
and  $(vi)$ implies that $(6)$ is satisfied.
Moreover, $(vii)$ gives that $U_\delta=\bigcup_{k\in\w}H^1_k$, $\w\setminus U_\delta=\bigcup_{k\in\w}H^0_k$,
$V_\delta=\bigcup_{k\in\w}G^1_k$, and $\w\setminus V_\delta=\bigcup_{k\in\w}G^0_k$.
Thus by $(ii)$ for every $x\in U_\delta$ the set $C_x$ is almost contained in $U_\delta$,
and hence also $Y\subset^* U_\delta$ for all $Y\in\Y_\alpha$ converging to $x$.
The same holds for $\w\setminus U_\delta.$
It follows that all $Y\in\Y_\alpha$ remain convergent sequences in $\la \w,\tau_\alpha\ra$.
Analogously, all $Z\in\Z_\alpha$ remain convergent sequences in $\la \w,\sigma_\alpha\ra$,
and hence $(3)$ holds for $\alpha$.
Finally, whenever $n\in\w$, $\ell\in 2^2$, and $\la x,y\ra\in U^{(\ell(0))}_\delta\times V^{(\ell(1))}_\delta$,
then\footnote{For $I\subset\w$ we set $I^{(1)}=I$ and $I^{(0)}=\w\setminus I$.},
then $\{\la s^{\la x,y\ra,n}_k, s^{\la x,y\ra,n}_k\ra:k\in\w\}\subset^* U^{(\ell(0))}_\delta\times V^{(\ell(1))}_\delta$.
Indeed, by $(vii)$ there exists $k\in\w$ such that $x\in U^{\delta,\ell(0)}_k$
and $y\in V^{\delta,\ell(1)}_k$, and therefore $(ii)$
implies
$$ \tilde{S}^{\la x,y\ra,n}\subset^* U^{\delta,\ell(0)}_k\subset U^{\ell(0)}_\delta \mbox{\ \ and\ \ } \tilde{T}^{\la x,y\ra,n}\subset^* V^{\delta,\ell(1)}_k\subset V^{\ell(1)}_\delta.$$
As a result  $E_n$ remains dense in
$\la\w\times\w,\tau_\alpha\otimes\sigma_\alpha\ra$ for all $n$,
because there is an injective sequence in $E_n$ converging to each
$\la x,y\ra$. This also proves that there are no isolated points in
$\la\w\times\w,\tau_\alpha\otimes\sigma_\alpha\ra$. This completes
the verification of $(1)-(7)$ for $\delta=\alpha$.

\textbf{\textit{III}}.  $\alpha=\delta+1$, $\delta$ is odd,   $x_\delta$ is a
limit point of $S_\delta$ in $\la\w,\sigma_\delta\ra$, and $|Z\cap S_\delta|<\w$ for all
$Z\in\Z_\delta$ converging to $x_\delta$.\\
In this case we simply repeat our argument from Case II, with the
roles of $\Z_\delta$ and $\Y_\delta$ interchanged, so that again
$(1)$-$(7)$ are satisfied for  $\delta=\alpha$.

\textbf{\textit{IV}}.  $\alpha=\delta+1$, $\delta$ is even, and either   $x_\delta$ is \emph{not} a
limit point of $S_\delta$ in $\la\w,\tau_\delta\ra$,
or it is and there exists
$Y\in\Y_\delta$ converging to $x_\delta$ in $\la\w,\tau_\delta\ra$
with $|Y\cap S_\delta|=\w$. Then we set $\Y_\alpha=\Y_\delta$, $\Z_\alpha=\Z_\delta$,
$\tau^-_\alpha=\tau_\delta,$ $\sigma^{-}_\alpha=\sigma_\delta$, and repeat the argument from
Case II starting from Claim~\ref{aux1}.

\textbf{\textit{V}}.  $\alpha=\delta+1$, $\delta$ is odd, and either   $x_\delta$ is \emph{not} a
limit point of $S_\delta$ in $\la\w,\sigma_\delta\ra$,
or it is and there exists
$Z\in\Z_\delta$ converging to $x_\delta$ in $\la\w,\sigma_\delta\ra$
with $|Z\cap S_\delta|=\w$. Then we set $\Y_\alpha=\Y_\delta$, $\Z_\alpha=\Z_\delta$,
$\tau^-_\alpha=\tau_\delta,$ $\sigma^{-}_\alpha=\sigma_\delta$, and repeat the argument from
Case III starting from Claim~\ref{aux1}.

All in all, this completes our construction of the objects mentioned
in $(1)$-$(7)$ for all $ \delta<\hot c$, so that these conditions
are satisfied. Conditions $(3)$ and $(2)$ imply that
$X=\la\w,\tau_{\hot c}\ra$ and $Y=\la\w,\sigma_{\hot c}\ra$ are
Frechet-Urysohn, and $X\times Y$ is not $M$-separable by $(6)$.
Moreover, both spaces are zero-dimensional since we started from
zero-dimensional topologies and always enlarged them by adding new
sets along with their complements. This completes our proof of
Theorem~\ref{ma_weaker_pfa}.
\end{proof}

Finally, Theorem~\ref{th:main} is a direct consequence
of Theorems~\ref{(*Q)} and \ref{ma_weaker_pfa}.

\section{Products of $H$-separable spaces in the Laver model} \label{sec5}

We will need to define the following new class of spaces.

\begin{definition}
A topological space $\la X, \tau \ra$  is called \emph{bounded
box-separable} (briefly, \emph{b.b.-separable}) if for every
function $R$ assigning to each countable family $\cU$ of non-empty
open subsets of $X$ a sequence $R(\cU)=\la F_{n} : n \in \omega \ra$
of finite non-empty subsets of $X$ such that $\{n :F_{n} \subset U
\}$ is infinite for every $U \in \cU$, there exists $\mathbb{U}
\subset [\tau \setminus \{ \emptyset \} ]^{\omega}$ of size
$|\mathbb{U}|=\omega_{1}$ and a sequence $\la l_{i} : i \in \omega
\ra \in \omega^{\omega}$ such that for all $U \in \tau \setminus \{
\emptyset \}$ there exists $\cU \in \mathbb{U}$ such that for all
but finite $i \in \omega$ there is $n \in [l_{i},l_{i+1})$ such that
$R(\cU)(n) \subseteq U$.
\end{definition}

The proof of the following statement is  close to that of
\cite[Lemma~2.2]{RepZd18}, the only difference being a more careful
analysis of sets of the form $\{n: R(\cU)(n)\subset U\}$.

\begin{proposition} \label{main_pl}
In the Laver model every countable $H$-separable space is
b.b.-separable \end{proposition}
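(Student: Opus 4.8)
The plan is to mirror the proof of \cite[Lemma~2.2]{RepZd18} up to the point where it yields a family of $\aleph_1$ many countable families of open sets ``catching'' every non-empty open set, and then to add a block-synchronization step which uses $\bb=\hot c=\w_2$ in the Laver model. So fix a countable $H$-separable space $X$ with underlying set $\w$ and topology $\tau$, and fix $R$ as in the statement, so that $R(\cU)=\la F^{\cU}_n:n\in\w\ra$ consists of finite non-empty subsets of $\w$ with $\{n:F^{\cU}_n\subset V\}$ infinite for each $V\in\cU$, for every countable family $\cU$ of non-empty open sets.

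The first and main step is to produce, following the reflection argument of \cite[Lemma~2.2]{RepZd18}, a $\pi$-base $\mathcal B\subset\tau\setminus\{\emptyset\}$ of $X$ with $|\mathcal B|\le\aleph_1$ together with a family $\mathbb U$ of $\aleph_1$ many countably infinite families of non-empty open sets such that $\mathcal B\subset\bigcup\mathbb U$. Here one reflects $\tau$ and $R$ into an elementary submodel $M\prec H(\theta)$ of size $\aleph_1$ with $\w_1\subset M$, and uses the $H$-separability of $X$ together with the properness and the Laver property of the Laver iteration to show that the $\le\aleph_1$ many open sets in $M$ already form a $\pi$-base: otherwise some non-empty $U\in\tau$ would miss all of them, and one could assemble from the sequences $R(\cU)$ with $\cU\in M$ a sequence of dense subsets of $X$ lying in $M$ which witnesses the failure of $H$-separability. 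This is exactly where ``a more careful analysis of the sets $\{n:R(\cU)(n)\subset U\}$'' is called for: one must arrange that $U$ is caught \emph{through} some $V\in\cU$ with $V\subset U$, which, by monotonicity of $\{n:R(\cU)(n)\subset U\}$ in $U$, will let us later control the gaps of $\{n:F^{\cU}_n\subset U\}$ by those of $\{n:F^{\cU}_n\subset V\}$. After harmlessly enlarging $\mathcal B$ to an infinite set and adjoining further countably infinite subfamilies of $\tau\setminus\{\emptyset\}$ if necessary, we may assume $|\mathbb U|=\w_1$; note that each $\cU\in\mathbb U$ satisfies that $R(\cU)$ covers every member of $\cU$ infinitely often.

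Next I would carry out the block synchronization. Enumerate $\mathbb U=\{\cU_\xi:\xi<\w_1\}$, and for each $\xi$ write $\cU_\xi=\{V^\xi_k:k\in\w\}$ and $R(\cU_\xi)=\la F^\xi_n:n\in\w\ra$. Since $\{n:F^\xi_n\subset V^\xi_k\}$ is infinite for every $k$, the function
\[
h_\xi(m)=\min\bigl\{N>m:\ \forall k\le m\ \exists n\in(m,N]\ \text{with}\ F^\xi_n\subset V^\xi_k\bigr\}
\]
is well defined. Because $\bb=\w_2$ in the Laver model while $|\{h_\xi:\xi<\w_1\}|\le\aleph_1$, there is a strictly increasing $h^*\in\w^\w$ with $h_\xi\le^* h^*$ for all $\xi<\w_1$. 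Set $l_0=0$ and $l_{i+1}=h^*(l_i)+1$. Then for each $\xi<\w_1$ and $k\in\w$, for all but finitely many $i$ (namely those with $l_i\ge k$ and $h_\xi(l_i)\le h^*(l_i)$, which holds eventually since $l_i\to\infty$ and $h_\xi\le^* h^*$) the interval $[l_i,l_{i+1})$ contains some $n\in(l_i,h_\xi(l_i)]$ with $F^\xi_n\subset V^\xi_k$.

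Finally, given any $U\in\tau\setminus\{\emptyset\}$, choose $V\in\mathcal B$ with $V\subset U$ and $\xi<\w_1$, $k\in\w$ with $V=V^\xi_k$; then $\cU_\xi\in\mathbb U$, and for all but finitely many $i$ there is $n\in[l_i,l_{i+1})$ with $R(\cU_\xi)(n)=F^\xi_n\subset V^\xi_k=V\subset U$. Hence $\mathbb U$ and $\la l_i:i\in\w\ra$ witness that $X$ is b.b.-separable. I expect the genuine difficulty to be concentrated entirely in the first step, i.e.\ in the Laver-model reflection argument of \cite{RepZd18} producing a $\pi$-base of size $\le\aleph_1$ caught by countable families through their members; the block synchronization and the conclusion are then routine consequences of $\bb=\w_2$ and of the monotonicity of $\{n:R(\cU)(n)\subset U\}$ in $U$.
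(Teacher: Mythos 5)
Your second step (the block synchronization of the sets $\{n:R(\cU)(n)\subset V\}$ via $\bb=\w_2$) is correct and corresponds to the implicit final step of the paper's argument, where the sets $J\in[\w]^\w\cap V[G_1]$ produced by Lemma~\ref{cl_bb_01} are uniformized into a single sequence $\la l_i:i\in\w\ra$. The gap is in your first step, which is exactly where the entire difficulty lives. You assume that in the Laver model a countable $H$-separable space has a $\pi$-base $\mathcal B$ of size $\aleph_1$ consisting of open sets from an elementary submodel (equivalently, from some intermediate extension), so that every non-empty $U\in\tau$ \emph{contains} some $V\in\mathcal B$ belonging to some $\cU\in\mathbb U$, and you then read off the bounded gaps from the monotonicity of $U\mapsto\{n:R(\cU)(n)\subset U\}$. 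Neither \cite[Lemma~2.2]{RepZd18} nor the present proof establishes anything like a small $\pi$-base, and your sketch of how to obtain one does not work: an open $U$ containing no member of $\mathcal B$ need not ``miss'' the members of $\mathcal B$, and the sequences $R(\cU)$ consist of finite sets rather than dense sets, so there is no evident way to assemble from them dense sets refuting $H$-separability.

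What the actual argument gives is genuinely weaker. Using Laver's pure-decision lemma one attaches to the nodes $s$ of a Laver condition finite families $\cU_s$ of \emph{ground-model subsets of $\w$} such that the name $\name{A}$ for the new open set is forced to agree with some $U\in\cU_s$ on an initial segment $N_s$; $H$-separability enters only in Claim~\ref{cl_bb_02}, where the dense sets $\w\setminus U$ for approximating sets $U$ with empty interior are played against a fixed $H$-separability witness to show that the $U$'s may be assumed to have non-empty interior. The containment $R(\cU)(n)\subset A$ is then obtained from $R(\cU)(n)\subset Int(U)\cap N_s\subset U\cap N_s=A\cap N_s$, i.e.\ through agreement on an initial segment, and there is no reason whatsoever for $Int(U)\subset A$. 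Consequently the bounded-gap structure cannot come from monotonicity under a subset $V\subset A$; it is extracted from the fusion markers $n(s,U)$, $N_s$, $j_s$ and from the Laver real via $J=\{j_{\ell\uhr m}\}\in V[G_1]$ --- this is the ``more careful analysis of the sets $\{n:R(\cU)(n)\subset U\}$'' announced before the proposition. Unless you can actually prove the $\pi$-base claim (which we do not know how to do, and which may well be false in the Laver model), your argument does not establish the proposition.
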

\begin{proof}
 We work in $V[G_{\omega_{2}}]$,
where $G_{\omega_{2}}$ is $\mathbb{P}_{\omega_{2}}$-generic and
$\mathbb{P}_{\omega_{2}}$ is the iteration of length $\omega_{2}$
with countable supports of the Laver forcing, see \cite{Lav76} for
details. Let us fix an $H$-separable space of the form
$\la\omega,\tau\ra$ and a function $R$ such as in the definition of
b.b-separability. By a standard argument (see, e.g., the proof of
\cite[Lemma 5.10]{BlaShe87}) there exists an $\omega_{1}$-club $C
\subseteq \omega_{2}$ such that for every $\alpha \in C$ the
following conditions hold:
\begin{itemize}
\item[(i)] $\tau \cap V[G_{\alpha}] \in V[G_{\alpha}]$ and
 for every sequence $\la D_{n}:n \in \omega \ra \in V[G_{\alpha}]$ of
 dense subsets of $\la\omega,\tau\ra$ there exists a sequence
  $\la K_{n}:n\in\omega\ra \in V[G_{\alpha}]$ such that
  $K_{n}\in [D_{n}]^{<\omega}$ and for every
  $U \in \tau \setminus \{ \emptyset \}$ the intersection
  $U \cap K_{n}$ is non-empty for all but finitely many $n \in \omega$;
\item[(ii)] $R(\cU)\in V[G_{\alpha}]$ for
any $\cU \in [\tau \setminus \{ \emptyset \}]^{\omega}\cap V[G_{\alpha}]$; and
\item[(iii)] For every $A \in \PP(\omega)\cap V[G_{\alpha}]$
 the interior $Int(A)$ also belongs to $V[G_{\alpha}]$.
\end{itemize}
By \cite[Lemma 11]{Lav76} there is no loss of generality in assuming
that $0 \in C$. Set
$\mathbb{U}:=[\tau\setminus\{\emptyset\}]^{\omega}\cap V$. It
suffices to prove the following auxiliary

\begin{lemma}\label{cl_bb_01}
For every $A \in \tau \setminus \{ \emptyset \}$ there are $\cU \in
\mathbb{U}, J \in [\omega]^{\omega}\cap V[G_{1}]$ such that for
every consecutive $j, j' \in J$ there exists $n \in [j,j')$ with the
property $R(\cU)(n) \subset A$.
\end{lemma}
\begin{proof}
We shall prove an equivalent statement:
\begin{quote}
For every $A \in \tau \setminus \{ \emptyset \}$ there are $\cU \in
\mathbb{U}, J \in [\omega]^{\omega}\cap V[G_{1}]$ such that for
every $m\in\w$  there exists $n \in [m,J(m))$ with the property
$R(\cU)(n) \subset A$, where $J(m)$ is the $m$-th element of $J$
with respect to its increasing enumeration.
\end{quote}

Suppose that  there exists $A \in \tau \setminus \{ \emptyset \}$
for which the lemma is false. Let $\name A$ be a
$\mathbb{P}_{\omega_{2}}$-name for $A$ and
$p\in\mathbb{P}_{\omega_{2}}$ a condition forcing the negation of
the  statement quoted above. Applying \cite[Lemma 14]{Lav76} to the
sequence $\la \name a_{i}:i \in \omega \ra$ such that $\name
a_{i}=\name A$ for all $i\in \omega$, we get a condition $p'\leq p$
such that $p'(0) \leq^{0} p(0)$, and a finite set $\cU_{s} \subset
\PP(\omega)$ for every $s \in p'(0)$ with $p'(0)\la 0\ra \leq s$,
such that for each $n \in \omega$, $s \in p'(0)$ with $p'(0)\la 0\ra
\leq s$, and for all but finitely many immediate successors $t$ of
$s$ in $p'(0)$ we have
\begin{center}
$\extt{p'(0)_{t}}{\hspace{1mm} p'\restriction [1, \omega_{2})} \Vdash \exists U \in \cU_{s} (\name A \cap n = U \cap n)$.
\end{center}
\begin{claim} \label{cl_bb_02}
There exists $p'' \leq p'$ s.t. for every $s \in p''(0), p''(0) \la
0 \ra \leq s$, $n \in \omega$ and every immediate successor $t$ of
$s$ in $p''(0)$ we have $$\extt{p''(0)_{t}}{\hspace{1mm}
p''\restriction [1, \omega_{2})} \Vdash \exists U \in \cU_{s}
(Int(U) \neq \emptyset \land \name A \cap n = U \cap n).$$
\end{claim}
\begin{proof}
Suppose by contradiction that
\begin{itemize}
\item[$(**)$]
  For every $p'' \leq p'$ it is
possible to find $s \in p''(0)$ with $p''(0) \la 0 \ra \leq s$, $n
\in \omega$, infinitely many immediate successors $t$ of $s$ in
$p''(0)$, for which there exists  $r=r(t) \leq
\extt{p''(0)_{t}}{\hspace{1mm} p''\restriction [1, \omega_{2})}$
forcing\\
$\forall U\in\U_s\: (U\cap n=\name{A}\cap n\Rightarrow
Int(U)=\emptyset)$.
\end{itemize}
Let us fix $p''\leq p'$ and let $s$ be as in $(**)$. Note that we
can replace  $n$ with any other bigger number and $(**)$ will be
still satisfied, so we may assume that $U\cap n\neq U'\cap n$ for
any distinct $U,U'\in\U_s$.
 Let $p^{(3)}\in\IP_{\w_2}$ be the
condition obtained by strengthening $p''(0)$ by leaving only those
infinitely many immediate successors of $s$ like in $(**)$, and
$p^{(3)}(\alpha)=p''(\alpha)$ for all $\alpha>0$. Next, removing yet
another finite collection of immediate successors of $s$ in
$p^{(3)}(0)$ and keeping the other coordinates the same, we may
assume that for every immediate successor $t$ of $s$ in $p^{(3)}(0)$
we have
$$ p^{(3)}(0)_{t}^{\,\frown} \hspace{1mm} p''\restriction [1,
\omega_{2}) \forces \exists U\in\U_s\: (U\cap n=\name{A}\cap n),$$
and hence by strengthening $r(t),$ if necessary, we may additionally
assume that for some $U(t)\in\U_s$ with $Int(U(t))=\emptyset$ we
have $r(t) \Vdash \name A \cap n = U(t) \cap n$. By the choice of
$n$ we get that for each $t$ such an
 $U(t)$ is unique. Furthermore, strengthening
 $p^{(3)}$ again in the way described above and
 using the finiteness of $\U_s$, we may assume that there exists $U_s\in\U_s$ with
  $U(t)=U_s$ for all $t$ as above. Summarizing all the modifications
  of $p''$ mentioned above, we get

\begin{itemize}
\item[$(**)'$]
  For every $p'' \leq p'$ it is
possible to find $p^{(3)}\leq p''$, $s \in p^{(3)}(0)$ with
$p^{(3)}(0) \la 0 \ra \leq s$, $n \in \omega$ with $U\cap n\neq
U'\cap n$ for any distinct $U,U'\in\U_s$, $U_s\in\U_s$ with
$Int(U_s)=\emptyset$, and for every immediate successors $t$ of $s$
in $p^{(3)}(0)$ a condition $r(t) \leq
\extt{p^{(3)}(0)_{t}}{\hspace{1mm} p''\restriction [1, \omega_{2})}$
forcing  $U_s\cap n=\name{A}\cap n$.
\end{itemize}
 Let $\la D_{k} : k \in \omega \ra\in V$ be a  sequence of dense
subsets of $\la\omega,\tau\ra$ such that for every $U \in \bigcup \{
\cU_{s} : s \in p'(0), p'(0) \la 0 \ra \leq s \}$ with $Int(U) =
\emptyset$ there are infinitely many $k \in \omega$ such that $D_{k}
= \omega \setminus U$. Let $\la F_{k} : k \in \omega \ra\in V$  be a
witness of the $H$-separability of $X$ for $\la D_{k} : k \in \omega
\ra$. So it is possible to choose $k_{0} \in \omega$ and $p'' \leq
p'$ such that $p'' \Vdash (\forall k \geq k_{0})(\name A \cap F_{k}
\neq \emptyset)$.

Fix $p^{(3)}$, $s,$ $n$, $U_s$, and $p(t)$'s  such as in $(**)'$.
 Let
$k_1 \geq k_{0}$ be  such that $D_{k_1}=\w\setminus U_s$, and thus
$F_{k_1} \cap U_s = \emptyset$. Choose now $n_1
>  n,\max F_{k_1}$
 and an immediate successor $t$ of $s$ in $p^{(3)}(0)$
with
\begin{center}
$\extt{p^{(3)}(0)_{t}}{\hspace{1mm} p''\restriction [1, \omega_{2})}
\Vdash \exists U \in \cU_{s} (\name A \cap n_1 = U \cap n_1)$.
\end{center}
Thus $r(t)$ also forces the above property (being stronger than
$\extt{p^{(3)}(0)_{t}}{\hspace{1mm} p''\restriction [1,
\omega_{2})}$) as well as $U_s\cap n=\name{A}\cap n$. Since all the
differences between elements of $\U_s$ show up below $n$, we
conclude that $r(t)$ forces $U_s\cap n_1=\name{A}\cap n_1$. Since
$$ F_{k_1}\subset D_{k_1}\cap n_1 =(\w\setminus U_s)\cap n_1=n_1\setminus U_s, $$
we conclude that $r(t)$ forces $F_{k_1}\cap\name{A}=\emptyset$. On
the other hand $k_1\geq k_0$, $r(t)\leq p''$, and the latter forces
$\name{A}\cap F_k\neq\emptyset$ for all $k\geq k_0$, a
contradiction.
\end{proof}
We can now assume that every $U \in \cU_{s}$ with $s \in p'(0)$  and
$p'(0) \la 0 \ra \leq s$ has nonempty interior. Let $\cU = \{ Int(U)
: U \in \bigcup \{ \cU_{s} : s \in p''(0), p''(0) \la 0 \ra \leq s
\}$ (or any countable family containing it if this family of
interiors is finite) and $R(\cU)(n)=\la F_{n} : n \in \omega \ra$.
Replace $p'(0)$ with a tree $T$ in the Laver forcing $\mathcal{L}$
such that $T \la 0 \ra = p'(0) \la 0\ra$ and for all $s \in T$ with
$s \geq T\la 0 \ra$ there exists $N_{s} \in \omega$  so that for
every immediate successor $t$ of $s$ in $T$ the following two
properties hold:
\begin{itemize}
\item[(a)] $\forall U \in \cU_{s}$ $\exists n=n(s,U) > |s|\:
(F_{n} \subset Int(U) \cap N_{s})$
\item[(b)] $\extt{T_{t} \hspace{.5mm}}{\hspace{.5mm} p'\restriction [1,\omega_{2})}
\Vdash \exists U \in \cU_{s}\: (\name A \cap N_{s} = U \cap N_{s})$
\end{itemize}
For any $s \in T$ set
\begin{center}
$j_{s} = \max \{ n(s,U) : U \in \cU_{s} \}+1$
\end{center}
Let $G_{1}$
be
$\mathcal{L}$-generic over $V$ with $T \in G_{1}$ and
$\ell\in\zrost$ be the Laver real generated by $G_{1}$. Finally put
\begin{center}
$J = \{j_{ \ell \restriction m} : m \geq |T\la 0 \ra|\}$
\end{center}
and note that $J \in V[G_{1}]$.

\begin{claim} \label{cl_bb_03}
$\extt{T}{\hspace{.5mm} p' \restriction [1,\omega_{2})} \Vdash
\forall m \geq |T \la 0 \ra|\: \exists n \in [m, j_{\name
\ell\restriction m})\: (F_{n} \subset \name A)$
\end{claim}
\begin{proof}
Suppose the statement is false  and pick $r \leq \extt{T}{p'
\restriction [1,\omega_{2})}$ and $m \geq |T \la 0 \ra |$ such that
$$r \Vdash  \forall n \in [m,  j_{\name \ell \restriction m}) (F_{n}
\not\subset \name A).$$
 We can write $r = \extt{R}{\hspace{.5mm}r
\restriction[1,\omega_{2})}$  and without loss of generality assume
$|R \la 0 \ra| \geq m+1$. Setting $\{s\} = R \cap \omega^{m}$ and
$\{t\} = R \cap \omega^{m+1}$ we obtain two elements of $T$ which
contradict the disjunction of $(a)$ and $(b)$ above.
\end{proof}

It follows that $$\extt{T}{\hspace{.5mm} p' \restriction
[1,\omega_{2})}\leq p'\leq p$$ and $\extt{T}{\hspace{.5mm} p'
\restriction [1,\omega_{2})}$ forces for $\name{A}$ the quoted
statement after the formulation of Lemma~\ref{cl_bb_01},
contradicting  our assumption that $p$ forces for $\name{A}$ the
negation of that statement. This contradiction completes the proof
of Lemma~\ref{cl_bb_01} and thus also of Proposition~\ref{main_pl}.
\end{proof}
\end{proof}

\begin{lemma} \label{bbb}
Suppose $\bb > \omega_{1}$, $X$ is a b.b.-separable space and $Y$ is
an $H$-separable space. Then $X \times Y$ is $mH$-separable,  provided
it is separable.
\begin{proof}
Let $\la D_{n} : n \in \omega \ra$  be a decreasing sequence of
countable dense subsets of $X \times Y$. Fix a countable family
$\cU$ of open non-empty subsets of $X$ and a partition
$\{\Omega_{U}:U\in\mathcal U\}$ of $\omega $ into infinite pieces.
For every $U \in \cU$ and $n \in \Omega_{U}$ set
\begin{center}
$D_{n}^{\cU}=\{ y \in Y : \exists x \in U (\la x,y \ra \in D_{n} )
\}.$
\end{center}
Note that every $D_{n}^{\cU}$ defined in  this way is dense in $Y$.
Since $Y$ is $H$-separable there exists a sequence of sets $\la
L^{\cU}_{n} : n \in \omega \ra$ such that $L^{\cU}_{n} \in [
D^{\cU}_{n}]^{< \omega} $ for all $n \in \omega$ and every open
subset of $Y$ intersects all but finite $L^{\cU}_{n}$'s. For every
$U \in \cU$ and $n \in \Omega_{U}$ find a set $K^{\cU}_{n} \in
[U]^{< \omega}$ such that for every $y \in L^{\cU}_{n}$ there exists
$x \in K^{\cU}_{n}$ such that $\la x,y \ra \in D_{n}$, and set
$R(\cU)=\la K^{\cU}_{n} : n \in \omega \ra$. Note that $K^{\cU}_{n}
\subset U$ for all $n \in \Omega_{U}$, so $R$ is as in the
definition of b.b.-separability. Therefore there exist a family
$\mathbb{U}$ of countable collections of open non-empty subsets of
$X$ of cardinality $\omega_{1}$ and a sequence $\la l_{i} : i \in
\omega \ra \in \omega^{\omega}$ that witness the b.b.-separability.
By our hypothesis $|\mathbb U| < \bb$, so we can select a sequence
$\la F_{n} : n \in \omega \ra$ such that $F_{n} \in [ D_{n}]^{<
\omega} $ and $K^{\cU}_{n} \times L^{\cU}_{n} \subset F_{n}$ for all
$\mathcal U\in\mathbb U$ and all but finitely many $n \in \omega$.
We claim that the sequence
$$\la F'_i:=\bigcup\limits_{n
\in [l_{i},l_{i+1})} F_{n}\: :\: i\in\w\ra$$
 witnesses the
$mH$-separability of $X \times Y$. First of all, note that
$F'_i\subset D_{l_i}\subset D_i$ by the monotonicity of the sequence
$\la D_n:n\in\w\ra$. Now fix an open non-empty subset $U \times V
\subseteq X \times Y$ and find $\cU \in \mathbb{U}$ and $i_0\in\w$
such that for all $i \geq i_{0}$ there exists $n \in
[l_{i},l_{i+1})$ with $R(\cU)(n)=K^{\cU}_{n} \subset U$. Suppose
that $i \geq i_{0}$, and both of the conditions $K^{\cU}_{n} \times
L^{\cU}_{n} \subset F_{n}$ and  $L^{\cU}_{n} \cap V \neq \emptyset$
hold true for all $n \geq l_{i}$. Given $n\in [l_i,l_{i+1})$ with
$R(\cU)(n)=K^{\cU}_{n} \subset U$, it follows from the above that
 $F_{n} \cap (U \times V) \neq \emptyset$, which
combined with $F_n\subset F'_i$
  proves the $mH$-separability of $X \times
Y$.
\end{proof}
\end{lemma}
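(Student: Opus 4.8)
The plan is to use the two separability hypotheses in orthogonal directions: b.b.-separability of $X$ will produce a \emph{coarse} skeleton along the first coordinate (the family $\mathbb U$ of size $\w_1$ together with the blocks $[l_i,l_{i+1})$), the $H$-separability of $Y$ will be applied to a suitable sequence of ``slices'' along the second coordinate, and the hypothesis $\bb>\w_1$ will be invoked exactly once to merge the $\w_1$-many finite pieces coming from the different members of $\mathbb U$ into a single honest sequence of finite sets. Let $\la D_n:n\in\w\ra$ be a decreasing sequence of dense subsets of $X\times Y$; since $X\times Y$ is separable (and, in the application of interest, $X$ and $Y$ are countable) we may assume that each $D_n$ is countable, so that $D:=\bigcup_{n\in\w}D_n$ is countable and we may fix an enumeration $D=\{d_k:k\in\w\}$.

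First I would set up the function $R$ to which b.b.-separability is applied. Fix a countable family $\cU$ of nonempty open subsets of $X$ and a partition $\{\Omega_U:U\in\cU\}$ of $\w$ into infinite pieces. For $n\in\w$, letting $U\in\cU$ be the unique member of $\cU$ with $n\in\Omega_U$, set $D_n^{\cU}:=\{y\in Y:\exists\,x\in U\ \la x,y\ra\in D_n\}$; density of $D_n$ in $X\times Y$ makes every $D_n^{\cU}$ dense in $Y$. Apply the $H$-separability of $Y$ to $\la D_n^{\cU}:n\in\w\ra$ to get finite $L_n^{\cU}\subset D_n^{\cU}$ such that every nonempty open subset of $Y$ meets all but finitely many $L_n^{\cU}$; for each $y\in L_n^{\cU}$ pick $x_n^{\cU}(y)\in U$ with $\la x_n^{\cU}(y),y\ra\in D_n$, and put $K_n^{\cU}:=\{x_n^{\cU}(y):y\in L_n^{\cU}\}\subset U$ and $P_n^{\cU}:=\{\la x_n^{\cU}(y),y\ra:y\in L_n^{\cU}\}\subset D_n$. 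Then $R(\cU):=\la K_n^{\cU}:n\in\w\ra$ satisfies $\{n:R(\cU)(n)\subset U\}\supseteq\Omega_U$ for every $U\in\cU$, so $R$ (defined arbitrarily on countable families of open sets not of the above form) is admissible in the definition of b.b.-separability. Let $\mathbb U$ with $|\mathbb U|=\w_1$ and $\la l_i:i\in\w\ra$ witness b.b.-separability of $X$ for this $R$; thinning $\la l_i:i\in\w\ra$ if necessary, we may assume $l_i\geq i$ for all $i$.

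The only genuinely set-theoretic step comes now. For $\cU\in\mathbb U$ put $g_{\cU}(n):=1+\max\{k:d_k\in P_n^{\cU}\}$ (and $g_{\cU}(n):=0$ if $P_n^{\cU}=\emptyset$). Since $|\mathbb U|=\w_1<\bb$, choose $g^*\in\w^\w$ with $g_{\cU}\leq^* g^*$ for all $\cU\in\mathbb U$, and set $F_n:=D_n\cap\{d_k:k<g^*(n)\}$, a finite subset of $D_n$; by construction, for every $\cU\in\mathbb U$ we have $P_n^{\cU}\subseteq F_n$ for all but finitely many $n$. Finally let $F'_i:=\bigcup_{n\in[l_i,l_{i+1})}F_n$; as $\la D_n:n\in\w\ra$ is decreasing and $n\geq l_i\geq i$ throughout this block, $F'_i\subseteq D_{l_i}\subseteq D_i$.

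It remains to check that $\la F'_i:i\in\w\ra$ witnesses $mH$-separability of $X\times Y$. Fix a basic nonempty open $U\times V\subseteq X\times Y$. By the choice of $\mathbb U$ there are $\cU\in\mathbb U$ and $i_0$ such that for every $i\geq i_0$ some $n\in[l_i,l_{i+1})$ has $K_n^{\cU}=R(\cU)(n)\subset U$; enlarging $i_0$, we may further assume that for all $n\geq l_{i_0}$ both $P_n^{\cU}\subseteq F_n$ and $L_n^{\cU}\cap V\neq\emptyset$ hold, the latter because $V$ meets all but finitely many $L_n^{\cU}$. Then for any $i\geq i_0$, picking such an $n\in[l_i,l_{i+1})$ and a point $y\in L_n^{\cU}\cap V$, we get $x_n^{\cU}(y)\in K_n^{\cU}\subset U$ and $\la x_n^{\cU}(y),y\ra\in P_n^{\cU}\subseteq F_n\subseteq F'_i$, so $F'_i\cap(U\times V)\neq\emptyset$. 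Hence every nonempty open subset of $X\times Y$ meets all but finitely many $F'_i$, as required. I expect the main obstacle to be not any single computation but the bookkeeping around the quantifier ``for all but finitely many'': one must choose the partition $\{\Omega_U\}$ so that $R$ genuinely fits the definition of b.b.-separability, and in the last step align three separate ``eventually'' conditions (the output of b.b.-separability, the domination $P_n^{\cU}\subseteq F_n$, and $L_n^{\cU}\cap V\neq\emptyset$) over a common tail of indices --- while $\bb>\w_1$ is used only to tame the $\w_1$-sized family $\{g_{\cU}:\cU\in\mathbb U\}$.
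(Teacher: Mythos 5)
Your proposal is correct and follows essentially the same route as the paper: the same function $R$ built from the slices $D_n^{\cU}$ and the $H$-separability of $Y$, the same appeal to b.b.-separability of $X$, the domination argument from $\bb>\w_1$ to merge the $\w_1$-many constraints, and the same block-union sets $F'_i$. The only (harmless, in fact slightly cleaner) deviation is that you keep the selection graph $P_n^{\cU}=\{\la x_n^{\cU}(y),y\ra:y\in L_n^{\cU}\}\subseteq D_n$ rather than the full product $K_n^{\cU}\times L_n^{\cU}$, which sidesteps the fact that the latter need not be contained in $D_n$.
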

Proposition~\ref{main_pl}, Lemma~\ref{bbb}, and the fact that
$\mathfrak b>\w_1$ is true in the Laver model all together imply the following
\begin{theorem}\label{main_tl}
In the Laver model, the product of two $H$-separable spaces is
$mH$-separable provided that it is separable. In particular, in this
model the product of two countable $H$-separable spaces is
$mH$-separable.
\end{theorem}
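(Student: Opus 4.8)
The plan is to deduce Theorem~\ref{main_tl} from Proposition~\ref{main_pl}, Lemma~\ref{bbb}, and the fact that $\bb=\w_2>\w_1$ holds in the Laver model; since the substantial work is already contained in those two results, what remains is an assembly argument together with a reduction to the countable case. First I would record two soft facts. (i) Every $H$-separable space $Z$ is separable: feeding the constant sequence $\la D_n:n\in\w\ra$ with all $D_n=Z$ into the definition produces finite sets $F_n\subset Z$ whose union meets every non-empty open set, hence is a countable dense subset. (ii) $H$-separability is inherited by dense subspaces: if $A$ is dense in $Z$ and $\la E_n:n\in\w\ra$ is a sequence of dense subsets of $A$, then each $E_n$ is dense in $Z$ (as $\overline{E_n}\supseteq\overline A=Z$), so an $H$-separating sequence $\la F_n:n\in\w\ra$ for $\la E_n:n\in\w\ra$ in $Z$ automatically consists of finite subsets of $A$ and witnesses $H$-separability of $A$.

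I would then reduce to countable spaces. Let $X,Y$ be $H$-separable with $X\times Y$ separable (by (i) and the Hewitt--Marczewski--Pondiczery theorem this is in fact automatic, but it is assumed anyway). Fix a countable dense $D\subset X\times Y$ and put $A=\pi_0[D]$, $B=\pi_1[D]$; these are countable dense subspaces of $X$ and $Y$, hence $H$-separable by (ii), and $A\times B$ is a countable --- so separable --- dense subspace of $X\times Y$. In the Laver model Proposition~\ref{main_pl} gives that $A$ is b.b.-separable, and since $\bb>\w_1$ there, Lemma~\ref{bbb} applied with $A$ in the role of its ``$X$'' and $B$ in the role of its ``$Y$'' yields that $A\times B$ is $mH$-separable. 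For the ``in particular'' clause one already has $X=A$ and $Y=B$, so this step alone suffices.

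The remaining point, and the one I expect to require care, is to transfer $mH$-separability from the countable dense subspace $A\times B$ back up to $X\times Y$: a decreasing sequence of dense subsets of $X\times Y$ need not trace to dense subsets of $A\times B$, so the easy inheritance direction used in (ii) is of no help here. The hard part is thus to establish that for a separable space $mH$-separability is already detected by a suitable countable dense subspace; I would argue this directly, or else invoke the corresponding transfer statement for selective-separability properties of separable spaces in the spirit of \cite{GRUENHAGE20111352}, and then combine it with the previous paragraph. Everything else in the argument is bookkeeping.
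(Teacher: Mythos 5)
Your treatment of the ``in particular'' clause (both factors countable) is correct and coincides with the paper's argument: Proposition~\ref{main_pl} makes the countable $H$-separable factor b.b.-separable, $\bb>\w_1$ holds in the Laver model, and Lemma~\ref{bbb} finishes. The preliminary facts (i) and (ii) are fine as well.

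The general case, however, is not proved. Your plan is to pass to countable dense subspaces $A\subset X$, $B\subset Y$, obtain $mH$-separability of the countable product $A\times B$, and then transfer $mH$-separability upward from the dense subspace $A\times B$ to $X\times Y$. You correctly flag this last step as the crux, but you do not carry it out, and it cannot simply be invoked: given a decreasing sequence $\la D_n:n\in\w\ra$ of dense subsets of $X\times Y$, the traces $D_n\cap(A\times B)$ may be empty, and the $D_n$ need not even be separable as subspaces (dense subspaces of separable spaces need not be separable), so there is no evident way to replace them by dense subsets of $A\times B$ on which a selection could be made inside the countable subspace. No transfer statement of this form for $H$- or $mH$-separability is available in \cite{GRUENHAGE20111352}, so the appeal ``in the spirit of'' that paper does not close the gap. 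The paper's route avoids the issue entirely: Lemma~\ref{bbb} is formulated for an arbitrary b.b.-separable $X$ and $H$-separable $Y$ with $X\times Y$ separable, and its proof selects the finite sets $K^{\cU}_n\subset U\subset X$ and $L^{\cU}_n\subset Y$ directly from the given dense sets $D_n$, never descending to a countable dense subspace of the product; the only input needed about $X$ is its b.b.-separability (supplied by Proposition~\ref{main_pl} when $X$ is countable). So to get the first sentence of the theorem you should argue that the relevant factor itself is b.b.-separable and apply Lemma~\ref{bbb} to $X$ and $Y$ directly, rather than to $A$ and $B$; as written, your proposal establishes only the countable case.
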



\end{document}